\documentclass[10pt]{amsart}
\usepackage{amsmath, amssymb}
\usepackage{mathrsfs}
\newcommand{\no}[1]{#1}
\renewcommand{\no}[1]{}
\no{\usepackage{times}\usepackage[subscriptcorrection, slantedGreek, nofontinfo]{mtpro}
\renewcommand{\Delta}{\upDelta}}
\usepackage{color}
\usepackage{enumerate,comment}
\usepackage{graphicx}
\usepackage{dsfont}
\usepackage{epstopdf}
\graphicspath{{./pics/}}
\usepackage{booktabs}
\usepackage{threeparttable}

\date{\today}
\newcommand{\bel}{\begin{equation} \label}
\newcommand{\ee}{\end{equation}}
\def\beq{\begin{equation}}
\def\eeq{\end{equation}}
\newcommand{\bea}{\begin{eqnarray}}
\newcommand{\eea}{\end{eqnarray}}
\newcommand{\beas}{\begin{eqnarray*}}
\newcommand{\eeas}{\end{eqnarray*}}

\newcommand{\re}{\mathfrak R}

\newcommand{\R}{\mathbb{R}}

\newtheorem{Thm}{Theorem}[section]
\newtheorem{lem}{Lemma}[section]
\newtheorem{definition}{Definition}[section]

\newtheorem{prop}{Proposition}[section]

\newtheorem{example}{Example}[section]
\numberwithin{equation}{section}

\renewcommand{\d}{\mathrm{d}}
\allowdisplaybreaks
\providecommand{\abs}[1]{\left\lvert#1\right\rvert}
\providecommand{\norm}[1]{\left\lVert#1\right\rVert}

\def\phi {\varphi}

\title[]
{Stability Estimates for the Inverse Problem of Reconstructing Point sources in Parabolic Equations}

\author{Kuang Huang$^1$\and Bangti Jin$^1$ \and Yavar Kian$^2$ \and Faouzi Triki$^3$}

\date{}

\begin{document}

\begin{abstract}
In this work, we investigate the stability issue of the inverse problem of determining the locations and time-dependent amplitudes of point sources in a parabolic equation  with a non-self adjoint elliptic operator from boundary observations. We derive different stability estimates for determining  the locations and the amplitudes of the sources in the space, the plane as well as in dimension one. The analysis employs a novel approach that combines several different arguments, including the improved regularity of the solutions, the application of Carleman estimates, time extension of solutions, and construction of explicit solutions to the adjoint equations. Further we provide numerical reconstructions to complement the theoretical findings.\\
\textbf{Key words}: inverse source problem, point source identification, stability estimate, parabolic equation,  Carleman estimate.

\medskip
\noindent
{\bf Mathematics subject classification 2020 :} 35R30, 	35K20.
\end{abstract}
\maketitle

\renewcommand{\thefootnote}{\fnsymbol{footnote}}
\footnotetext{\hspace*{-5mm}
\begin{tabular}{@{}r@{}p{16cm}@{}}
& Manuscript last updated: \today.\\
$^1$
& Department of Mathematics, The Chinese University of Hong Kong, Shatin, New Territories, Hong Kong, P.R. China.\\
&The work of K. Huang is supported by a start-up fund from The Chinese University of Hong Kong and that of B. Jin  by Hong Kong RGC General Research Fund (Project 14306423), Hong Kong RGC  ANR / RGC Joint
Research Scheme (A-CUHK402/24) and a
start-up fund from The Chinese University of Hong Kong. \\
& E-mail: kuanghuang@cuhk.edu.hk,  bangti.jin@gmail.com.\\
$^2$
& Univ Rouen Normandie, CNRS, Laboratoire de Math\'{e}matiques Rapha\"{e}l Salem, UMR 6085, F-76000 Rouen, France.\\
& The work of Y. Kian is supported by the French National Research Agency ANR and Hong Kong RGC Joint Research Scheme for the project IdiAnoDiff (grant ANR-24-CE40-7039). E-mail: yavar.kian@univ-rouen.fr\\
$^3$
& Laboratoire Jean Kuntzmann,
UMR CNRS 5224, Universit\'e Grenoble-Alpes, 150 Pl. du Torrent,
38400 Saint-Martin- d'H\`eres, France.  E-mail: faouzi.triki@univ-grenoble-alpes.fr.

\end{tabular}}

\section{Introduction}
Parabolic equations such as advection-diffusion equations can accurately describe the spatio-temporal evolution of a scalar concentration field, e.g., the concentration of a contaminant in air, water, or soil generated by the pollutant, and represent one prominent class of mathematical models in environmental fluid dynamics and pollutant transport studies. This class of models is widely employed in air quality forecasting, groundwater contamination studies, and oceanographic pollutant transport \cite{MVBT}. Thus their analysis under realistic environmental conditions is central to quantify pollutant exposure, assess ecological risks, and support evidence-based policy for emission control and environmental protection. One pivotal step in pollution management lies in accurately identifying pollution sources.
Thus, inverse source problems for the advection diffusion equation have emerged as a major research topic in environmental applications \cite{GK,HJL,JYZ}. Mathematically, this task often involves recovering point sources in the advection diffusion equation, i.e., to infer unknown characteristics of sources, e.g., the locations, intensities and the history of release, from sparse and noisy boundary data.

Let $\Omega\subset \mathbb{R}^d$ ($d=1,2,3$) be an open bounded and simply connected domain with a
$ C^{2}$ boundary $\partial \Omega$ and $T>0$. The advection diffusion problem is described by the following initial boundary value problem:
\begin{equation}\label{eq1}
\left\{\begin{aligned}
\partial_tu -\Delta u+A\cdot\nabla u+\mu u &=  \sum_{k=1}^N\lambda_k(t)\delta_{x_k}, \quad \mbox{in }\Omega\times(0,T),\\
 \partial_\nu u&= 0, \quad \mbox{on } \partial\Omega\times(0,T), \\
u&=u_0, \quad \mbox{in } \Omega\times \{0\},
\end{aligned}\right.
\end{equation}
where $A\in\R^d$ is a constant vector describing the velocity field of the moving pollutant, $\mu\in\R$ denotes the reaction term, and $u_0\in H^1(\Omega)$ denotes the initial distribution of the pollutant concentration. The diffusion of pollution is generated by $u_0$ and the point sources at locations $x_1,\ldots,x_N\in \Omega$, $N\in\mathbb N^*$, and the amplitudes $\lambda_1,\ldots,\lambda_N\in L^2(0,T)$. This structure of the source is justified by the physical nature of environmental pollutants which often emanate at varying rates from fixed locations, e.g., industrial sites and deposited waste materials. Due to the presence of Dirac sources, the solution $u\in L^2(0,T;L^2(\Omega))$ of problem \eqref{eq1} may be considered in the transposition sense \cite{LM1} and $u(x,t)$, $x\in\Omega$, $t\in(0,T)$, denotes the concentration of contaminant at position $x$ and time $t$. In this work, we investigate the stable
determination of the locations $x_1, \ldots, x_N$, and the amplitudes $\lambda_1, \ldots, \lambda_N$ of the sources of pollution from
the knowledge of the concentration $u(x, t)$, $(x, t) \in \partial\Omega\times (0, T )$.

The unique determination of point sources in parabolic equations has received much attention. El Badia and Ha Duong \cite{EH} addressed the unique recovery of static sources in dimension $d=2,3$, and later extended the result to the one-dimensional case  \cite{EHH}. Later Andrle et al \cite{ABE,AE} investigated the unique recovery of moving point sources. For the related problem of identifying initial point sources from the terminal time observation, see  \cite{CaKu,UYZ,CaZu}.
In sharp contrast, the stability analysis for point sources in parabolic problems faces significant technical challenges. The unique recovery of point sources in parabolic equations relies on two main ingredients: the regularity of the solution $u$ to problem \eqref{eq1} and unique continuation properties \cite{ABE,AE,EH,EHH}. However, the strategy does not enable the derivation of stability estimates. Similarly, the use of adjoint solutions, which has proven effective for the Helmholtz equation \cite{AET,BLT}, does not lead to global stability estimates in the parabolic setting \cite{QYY}. Moreover, due to the lack of regularity, it remains unclear how the classical Carleman estimate-based approach (see, e.g. \cite{BBCS,BKS,Cho,IY}), which is effective for the stable recovery of space-dependent sources in $L^2(\Omega)$, can be extended to the case of point sources.

To the best of our knowledge, there are only two works on the stable recovery of time-dependent point-sources in the diffusion equation. The first one is due to Komornik and Yamamoto  \cite[Theorem 3.1]{KoY}, who investigated the stable recovery of locations of multiple sources, with known identical amplitudes $\lambda(t)$, from the final time  over-determination under suitable nonvanishing conditions on $\lambda$, and proved a Lipschitz stability estimate. The second work is the recent contribution due to Qiu et al \cite{QYY}, who investigated the stable recovery of point values of the amplitude. The global stable recovery of both amplitudes and locations of general time-dependent point sources in parabolic equations remains an open problem. Note that in addition to its mathematical interest, the stability issue is  motivated by providing mathematical guarantees for relevant reconstruction algorithms \cite{HuJinZhou:2025}.

In this work, we establish several new stability results for the simultaneous recovery of both the locations and the amplitude of the point sources, and take first steps towards addressing the stability issue of parabolic point source identification in the general case. Our theoretical findings indicate that the determination of the location is Lipschitz / H\"{o}lder stable, similar to the elliptic counterpart \cite{ElBadiaElHajj:2012,ElBadiaElHajj:2013,BLT,Ve}, but the determination of the amplitude $\lambda$ is only logarithmically stable. The precise statements and relevant discussions are given in Section \ref{sec:main}. The analysis strategy combines several analytical tools, including Carleman estimates for parabolic equations, improved local regularity results, time extensions of the solutions to problem \eqref{eq1}, and the construction of explicit solutions to the adjoint elliptic equation. While some of these techniques have appeared separately in the literature in different contexts \cite{BLT,ChY,IY}, their combination in the context of point source recovery appears to be entirely novel. These theoretical findings will be complemented by numerical illustrations in Section \ref{sec:numer}. The numerical results indicate that the location $x$ can be more stably recovered than the amplitude $\lambda(t)$, as predicted by the stability analysis.

The rest of the paper is organized as follows. In Section \ref{sec:main}, we present and discuss the main theoretical results. In Section \ref{sec:prelim}, we present several preliminary results on the direct problem that are crucial for the analysis of the inverse problem. In Section \ref{sec:proof}, we give the proofs of the main results. Finally in Section \ref{sec:numer}, we present numerical results to illustrate the stable reconstruction of the point sources.

\section{Main results}\label{sec:main}

Throughout we assume that there exists $T_0\in(0,T)$ such that
\bel{lambda}\lambda_k(t)=0,\quad t\in(T_0,T),\ k=1,\ldots,N.\ee
Physically this corresponds to the case that the sources become inactive after the time $t=T_0$. This assumption is crucial for the subsequent analysis and is fundamental for restricting the measurements to the lateral boundary $\partial\Omega \times (0,T)$. Similar assumptions have been imposed in many works dealing with the recovery of point sources, both in terms of uniqueness and stability \cite{EH,QYY}. While the assumption can be removed for uniqueness results \cite{ABE,AE}, it remains unclear how it can be avoided in the context of stability. Fix $T_1\in(T_0,T)$ and let $B(y,r):=\{x\in\R^d:\ |x-y|<r\}$ denote the ball centered at $y$ with a radius $r$. Fix also $r_0>0$ such that
$\cup_{k=1}^N \overline{B(x_k,r_0)}\subset \Omega$
and, for all $r\in(0,r_0)$, let
$\Omega_r:=\Omega\setminus\cup_{k=1}^N \overline{B(x_k,r)}$. We shall prove in Theorem \ref{t4} that, for the unique solution $u\in L^2(0,T;L^2(\Omega))$ in the transposition sense \cite{LM1} (see also Definition \ref{def:transposition}) of problem \eqref{eq1},
we have
\begin{equation*} u|_{\Omega_r\times (0,T)}\in L^2\left(0,T; H^2(\Omega_r)\right)\cap H^1(0,T;L^2(\Omega_r))\quad\mbox{and}\quad u|_{\Omega\times (T_1,T)}\in H^1(T_1,T;H^2(\Omega)).
\end{equation*}
Using these improved local regularity properties, we obtain four main theoretical results on point source identification depending on the space dimension $d$ and the number $N$ of sources.

\subsection{Source recovery in the space}
First we consider the  stability issue of the inverse problem of identifying one point source in the space, i.e., $d=3$ and $N=1$.
\begin{Thm}\label{t2} Let $d=3$ and, for $j=1,2$, let $\lambda^j\in L^2(0,T)$, satisfy condition \eqref{lambda} with $N=1$, $\lambda^j\not\equiv0$,  $x^j\in\Omega$ and $u_0\in H^1(\Omega)$. Assume also that there exists $s\in(0,\frac{1}{2})$ such that $\lambda^1\in H^s(0,T)$
and let  the following conditions
\begin{align}\label{t2b}\lambda=\lambda^1-\lambda^2\in H^1_0(0,T)\quad &\textrm{and}\quad \exists M>0,\ \norm{\lambda^1}_{H^s(0,T)}+\norm{\lambda}_{H^1(0,T)}\leq M,\\
\label{t2ab}\exists &r_\star>0,\ \norm{\lambda^1}_{L^2(0,T)}\geq r_\star
\end{align}
be fulfilled. Let $S$ be an open nonempty subset of the boundary $\partial\Omega$ and let $u^j$, $j=1,2$, be the solution of problem \eqref{eq1} with $N=1$, $\lambda_1=\lambda^j$ and $x_1=x^j$. Then there exists a constant $C>0$  depending  on $r_\star$, $s$, $\Omega$, $T_1$, $A$, $\mu$, $S$, $M$ and $T$ such that
\begin{align} \label{t2ab1}
|x^1-x^2|&\leq C(\norm{u^1-u^2}_{H^1(S\times(T_1,T))}+\norm{u^1-u^2}_{L^2(0,T;L^2(\partial\Omega))}),\\
\label{t2ab2}\norm{\lambda^1-\lambda^2}_{L^2(0,T)}&\leq C\ln\big( 3+(\norm{u^1-u^2}_{H^1(S\times(T_1,T))}+\norm{u^1-u^2}_{L^2(0,T;L^2(\partial\Omega))})^{-1}\big)^{-2}.
\end{align}
\end{Thm}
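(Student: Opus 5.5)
Set $u=u^1-u^2$ and $\lambda=\lambda^1-\lambda^2$, and write $\epsilon:=\norm{u}_{H^1(S\times(T_1,T))}+\norm{u}_{L^2(0,T;L^2(\partial\Omega))}$. Then $u$ solves \eqref{eq1} with $u_0=0$ and source $\lambda^1(t)\delta_{x^1}-\lambda^2(t)\delta_{x^2}$. The plan has two stages: first estimate the location, then the amplitude. Both rely on integrating against explicit solutions of the adjoint equation.

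\emph{Step 1: location.} On $(T_1,T)$ the sources vanish. By the improved regularity (Theorem \ref{t4}), $u\in H^1(T_1,T;H^2(\Omega))$ and $u$ solves the homogeneous equation there, with zero Neumann data and Dirichlet data controlled by $\epsilon$ on $S$. I would use a Carleman estimate, or the time extension argument combined with unique continuation from $S$, to bound a quantity like $\norm{u(\cdot,T_1)}$ in a weak norm. A cleaner route is to test against adjoint solutions. For $v$ solving $-\partial_t v-\Delta v-A\cdot\nabla v+\mu v=0$ in $\Omega\times(0,T)$, integration by parts in the transposition sense gives
\[
\int_0^T\big(\lambda^1(t)v(x^1,t)-\lambda^2(t)v(x^2,t)\big)\,dt=\int_0^T\!\!\int_{\partial\Omega}\big(u\,\partial_\nu v + (A\cdot\nu)\, u\, v\big)\,d\sigma\, dt+\int_\Omega u(x,T)v(x,T)\,dx .
\]
Here the boundary term is bounded by $\epsilon$. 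The final-time term is controlled through the Carleman/observability estimate on $(T_1,T)$ using the $H^1(S\times(T_1,T))$ data; this is why that norm appears in the estimate.

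I would choose separable solutions $v=e^{\kappa t}w(x)$, with $w$ an exponential solution of $\Delta w - A\cdot\nabla w=(\mu+\kappa) w$, such as $w=e^{\xi\cdot x}$ with $|\xi|^2-A\cdot\xi=\mu+\kappa$. This turns the left side into $\hat\lambda^1(\kappa)w(x^1)-\hat\lambda^2(\kappa)w(x^2)$, where $\hat\lambda^j(\kappa)=\int_0^T\lambda^j e^{\kappa t}\,dt$. Taking differences over several $\xi$ on a sphere (as in the elliptic works \cite{BLT}), and using that $\hat\lambda^1(\kappa)$ is bounded below for a suitable $\kappa$ thanks to \eqref{t2ab} and $\lambda^1\not\equiv0$, then yields $|x^1-x^2|\le C\epsilon$. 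Linear terms $\xi\cdot(x^1-x^2)$ are extracted by a mean-value argument. The compactness of the set $\{\norm{\lambda^1}_{H^s}\le M,\ \norm{\lambda^1}_{L^2}\ge r_\star\}$ would give uniformity of the lower bound. This is where $d=3$ and the $H^s$ bound enter: the $H^s$ bound ensures compactness in $L^2$, and dimension three allows enough free directions in $\xi$.

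\emph{Step 2: amplitude.} With $x^1$ and $x^2$ now close, I write
\[
\lambda^1\delta_{x^1}-\lambda^2\delta_{x^2}=\lambda\,\delta_{x^2}+\lambda^1(\delta_{x^1}-\delta_{x^2}).
\]
The second term is of size $|x^1-x^2|\le C\epsilon$ in a negative Sobolev norm, since $\lambda^1\in H^s$. For the first term, the identity above bounds $\int_0^T\lambda(t)e^{\kappa t}\,dt$ by $Ce^{c|\kappa|}\epsilon$ over a range of $\kappa$, using $w(x^2)\neq0$ for the exponential solutions. Recovering $\lambda\in H^1_0(0,T)$ from its Laplace-type transform on a bounded interval of real or imaginary $\kappa$ is a classical logarithmically unstable analytic continuation. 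Taking $\kappa=i\omega$ (complex $\xi$ is allowed) gives Fourier coefficients up to frequency $R$ with error $e^{cR}\epsilon$. The tail is $\le CM/R$ by the $H^1$ bound, and the $H^1_0$ condition allows periodic extension. Optimizing with $R\sim c^{-1}\ln(1/\epsilon)$ gives $\ln(3+\epsilon^{-1})^{-1}$. To reach the stated power $-2$, I would use the $H^1$ bound via interpolation, or an $R^{-2}$ tail from pairing with an extra derivative.

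The main obstacle is Step 1's uniform lower bound, together with handling the final-time term $u(\cdot,T)$ through the Carleman estimate on $(T_1,T)$. This is delicate because it requires the improved regularity of $u$ near $t\ge T_1$ and a quantitative unique continuation from $S$ that is Lipschitz rather than logarithmic. If the Lipschitz version fails, I would instead extend $u$ in time beyond $T$ by analyticity of the semigroup, making the final-time contribution vanish.
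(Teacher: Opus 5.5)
\textbf{Gap in the amplitude estimate.} Your Step~2 as written only gives $\norm{\lambda^1-\lambda^2}_{L^2(0,T)}\le C\ln(3+\epsilon^{-1})^{-1}$, and neither repair you suggest reaches the exponent $-2$ in \eqref{t2ab2}.
\begin{itemize}
\item The $H^1$ bound in \eqref{t2b} is already what gives your tail $\int_{|\tau|>R}\bigl|\int_0^T\lambda(t)e^{i\tau t}\,\d t\bigr|^2\,\d\tau\le CM^2R^{-2}$, i.e.\ $CM/R$ in norm. There is nothing left to interpolate with.
\item A tail of order $R^{-2}$ in norm would need $\lambda\in H^2$, which is not assumed.
\end{itemize}

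What is missing is the correct size of your test functions. At time frequency $\kappa=i\tau$, the adjoint exponentials $w=e^{\xi\cdot x}$ satisfy $\xi\cdot\xi+A\cdot\xi=\mu-i\tau$. (Your relation $|\xi|^2-A\cdot\xi=\mu+\kappa$ is the one for the forward operator.) Hence $|\xi|\le C(1+|\tau|)^{1/2}$. So $\norm{w}_{H^2(\Omega)}$, the flux $\norm{\partial_\nu w+(A\cdot\nu)w}_{L^2(\partial\Omega)}$, and the cross term $|\widehat{\lambda^1}|\,|w(x^1)-w(x^2)|\le C|\xi|e^{C|\xi|}|x^1-x^2|$ all grow like $e^{c|\tau|^{1/2}}$, not $e^{c|\tau|}$.

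With this parabolic scaling you get $\norm{\lambda}_{L^2(0,T)}^2\le C\bigl(e^{cR^{1/2}}\epsilon^2+M^2R^{-2}\bigr)$. Choosing $R^{1/2}\sim c^{-1}\ln(3+\epsilon^{-1})$, i.e.\ $R\sim\ln(3+\epsilon^{-1})^2$, gives exactly $\ln(3+\epsilon^{-1})^{-2}$. This is the paper's estimate \eqref{t2g} followed by its optimization. The paper also takes $\omega\perp(x^1-x^2)$ so that $v_p(x^1)/v_p(x^2)=e^{-A\cdot(x^1-x^2)/2}$ does not depend on $\tau$. Your absorption of $\widehat{\lambda^1}\bigl(w(x^1)-w(x^2)\bigr)$ via \eqref{t2ab1} also works, once the $e^{c|\tau|^{1/2}}$ growth is used.

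\textbf{The location part.} The rest of your plan is workable and differs from the paper in useful ways.
\begin{itemize}
\item You test directly on $(0,T)$ with $v=e^{\kappa t}w(x)$; the identity is Lemma~\ref{l1}. This avoids the time extension and Laplace transform of Propositions~\ref{p2}--\ref{p3} and allows any $\kappa\in\mathbb C$.
\item For the lower bound, the set $\{\norm{\lambda^1}_{H^s(0,T)}\le M,\ \norm{\lambda^1}_{L^2(0,T)}\ge r_\star\}$ is compact in $L^2(0,T)$, and $\kappa\mapsto\int_0^T\lambda^1e^{\kappa t}\,\d t$ is analytic. Together these give a non-explicit uniform lower bound on a fixed compact $\kappa$-interval. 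This replaces the explicit Plancherel argument for \eqref{t2a} and keeps the selected frequency bounded, which the paper leaves implicit.
\item To eliminate $\widehat{\lambda^2}$, use two real exponentials on the sphere $|\xi+A/2|^2=\mu+|A|^2/4-\kappa$, with $\kappa$ real and below $\mu+|A|^2/4$, so that $|e^{a}-1|\ge e^{-|a|}|a|$ applies. This replaces the paper's solutions $w_\sigma$ vanishing at $x^2$. This step does not really use $d=3$.
\end{itemize}

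The final-time term you flag as the main obstacle is exactly the Lipschitz bound \eqref{t4a} of Theorem~\ref{t4}. Your fallback would not remove it: extending $u$ beyond $T$ only turns $u(\cdot,T)$ into unobserved boundary traces on $\partial\Omega\times(T,\infty)$. The paper has to control those again through $\norm{u(\cdot,T)}_{H^2(\Omega)}$ and \eqref{t4a}.
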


To the best of our knowledge, in Theorem \ref{t2}, we obtain the first stability estimates for the simultaneous recovery of both the location $x$ and amplitude $\lambda$ of the point source in the parabolic equation. We are only aware of similar results for hyperbolic and Helmholtz equations \cite{AET,BLT,ElBadiaElHajj:2013}. For parabolic equations, stability estimates for the inverse problem  were limited to results for sources with a known and identical amplitude in \cite{KoY} or some partial stability estimates in \cite{QYY} (only in the 2D case). It appears that even the stable recovery of the locations with unknown amplitudes, stated in estimate \eqref{t2ab1}, is new. In Theorem \ref{t2}, we have extended these results for the first time to the more realistic situation where both locations and the amplitudes of the sources are unknown. In Theorem \ref{t2}, the recovery of the source location $x$ satisfies the Lipchitz stability estimate \eqref{t2ab1}, which is comparable to what was known for elliptic equations \cite{BLT} (for one point source). However, the recovery of the amplitude $\lambda(t)$ satisfies only a logarithmic stability estimate \eqref{t2ab2} which indicates a strong instability of the inverse problem for parabolic equations that was not observed earlier for elliptic and hyperbolic equations. These theoretical findings are also confirmed by the numerical experiments in Section \ref{sec:numer}: The recovery of the location is far more stable than that of the amplitudes (in the one- and two-dimensional cases).

The proof of Theorem \ref{t2} combines several different tools including Carleman estimates for parabolic equations, regularity properties and time extension of solutions of problem \eqref{eq1}, and the construction of explicit solutions of the adjoint elliptic equation. Although some of these arguments can be found separately in the literature in different contexts \cite{BLT,ChY,IY}, the combination of all of these tools appears to be completely new in the context of point-source recovery.

In Theorem \ref{t2}, we have restricted the analysis to one source (i.e. $N=1$). We believe that, by following the argument in \cite{BLT}, one can extend, under suitable assumptions, the stability estimates to the case of multiple point sources. We believe that, this improvement will come at the expense of  a weaker stability estimate and we leave this interesting issue to future investigations. Similarly,  weaker logarithmic stability estimates would be expected from the measurements restricted to an arbitrary subset of the boundary. Moreover, we have focused on the case of constant coefficients. The extension to  variable coefficients would require new technical tools, which we leave to further investigations.

\subsection{Source recovery in the plane}
The second and third results consider the stable identification of point sources in the plane (i.e., $d=2$) for  both situations $N\geq2$ and $N=1$. We first consider the recovery of the locations of multiple point sources.

\begin{Thm}\label{t3} Let $d=2$, $\mu=-\frac{|A|^2}{4}$ and, for $j=1,2$, we fix $\lambda^j_1,\ldots,\lambda^j_N\in L^2(0,T)$, satisfying \eqref{lambda} with  and $\lambda_k=\lambda^j_k$, $k=1,\ldots,N$,  $x^j_1,\ldots,x_N^j\in\Omega$, $u_0\in H^1(\Omega)$. Assume that $\lambda_k^j\not\equiv0$, $k=1,\ldots,N$, $j=1,2$, and the following extra assumption
\bel{t3a}\exists r_2>0,\quad \min_{k=1,\ldots,N}\abs{\int_0^T\lambda_k^1(t)\d t}\geq r_2.\ee
Assume also that there exist $\delta_0>0$ and a permutation $\sigma$ of $\{1,\ldots,N\}$ such that
\bel{t3c}k,\ell\in\{1,\ldots,N\},\ k\neq\ell\Rightarrow |x^j_k-x_\ell^j|\geq\delta_0,\ j=1,2,\ee
\bel{t3d}|x_k^1-x_{\sigma(k)}^2|=\min_{\ell=1,\ldots N}|x_k^1-x_\ell^2|,\quad k=1,\ldots,N.\ee
Let $S$ be an open nonempty subset of the boundary $\partial\Omega$ and let $u^j$, $j=1,2$, be the solution in the transposition sense of problem \eqref{eq1} with  $\lambda_k=\lambda_k^j$ and $x_k=x_k^j$, $k=1,\ldots,N$. Then,  there exists a constant $C>0$  depending  on $r_2$, $\delta_0$, $S$, $\Omega$, $A$, $\mu$, $T_1$ and $T$   such that
\bel{t3e}\max_{k=1,\ldots,N} |x_k^1-x_{\sigma(k)}^2| \leq C(\norm{u^1-u^2}_{H^1(S\times(T_1,T))}+\norm{u^1-u^2}_{L^2(0,T;L^2(\partial\Omega))})^{\frac{1}{N}}.\ee

\end{Thm}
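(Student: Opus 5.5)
Write $u=u^1-u^2$, $F(x,t)=\sum_k\lambda^1_k(t)\delta_{x^1_k}-\sum_k\lambda^2_k(t)\delta_{x^2_k}$, and
$$\varepsilon:=\norm{u}_{H^1(S\times(T_1,T))}+\norm{u}_{L^2(0,T;L^2(\partial\Omega))}.$$
The plan is to reduce the parabolic problem to an elliptic one by integrating in time, and then test against explicit holomorphic solutions of the adjoint operator, in the spirit of the algebraic method for planar point sources \cite{BLT,ElBadiaElHajj:2012}.

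\textbf{Step 1 (time integration).} Since the initial data coincide, $u(\cdot,0)=0$ and $u$ solves \eqref{eq1} with source $F$ and zero initial data. Set $U(x)=\int_0^T u(x,t)\,\d t$. Because of \eqref{lambda}, integrating \eqref{eq1} in time gives
$$-\Delta U+A\cdot\nabla U+\mu U=\sum_k\Lambda^1_k\delta_{x^1_k}-\sum_k\Lambda^2_k\delta_{x^2_k}-u(\cdot,T),\qquad \partial_\nu U=0,$$
with $\Lambda^j_k=\int_0^T\lambda^j_k\,\d t$. The trace $U|_{\partial\Omega}$ is bounded by $C\varepsilon$. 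The term $u(\cdot,T)$ has to be controlled by boundary data. On $(T_1,T)$ the function $u$ solves the homogeneous equation with $u\in H^1(T_1,T;H^2(\Omega))$ (Theorem \ref{t4}). We then apply a Carleman estimate combined with a time extension/analyticity argument in $(T_1,T)$, as in \cite{ChY,IY}, to bound the pairing of $u(\cdot,T)$ against each test function by $C\norm{u}_{H^1(S\times(T_1,T))}+C\norm{u}_{L^2(0,T;L^2(\partial\Omega))}$. Lipschitz control is plausible here because the test functions are smooth and fixed, so only weak norms of $u(\cdot,T)$ are needed. I expect this to be the main obstacle: one must make sure that only the data on $S$ and the full-boundary $L^2$ norm enter, with no logarithmic loss.

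\textbf{Step 2 (explicit adjoint solutions).} The condition $\mu=-|A|^2/4$ makes the operator conjugate to the Laplacian: with $v=e^{-A\cdot x/2}w$, the adjoint equation $-\Delta v-\nabla\cdot(Av)+\mu v=0$ reduces to $\Delta w=0$. We therefore take $w=\Re(z^m)$ and $w=\Im(z^m)$, $z=x_1+ix_2$, for $m=0,\ldots,2N-1$, and set $v_m=e^{-A\cdot x/2}z^m$, one sign convention fixed so that $v_m$ solves the adjoint equation. Multiplying by $v_m$ and integrating by parts (justified away from the poles by the $H^2(\Omega_r)$ regularity) gives the moment identities
$$\Big|\sum_k\Lambda^1_k e^{-A\cdot x^1_k/2}(z^1_k)^m-\sum_k\Lambda^2_k e^{-A\cdot x^2_k/2}(z^2_k)^m\Big|\le C\varepsilon,\qquad m=0,\ldots,2N-1.$$
The Neumann condition kills the $\partial_\nu U$ terms, leaving only the traces of $U$ and $u(\cdot,T)$, both handled in Step 1.

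\textbf{Step 3 (algebraic step).} This is a Prony-type moment problem with weights $c^j_k=\Lambda^j_ke^{-A\cdot x^j_k/2}$. By \eqref{t3a}, $|c^1_k|\ge c_0r_2$. The generalized Vandermonde argument of \cite{ElBadiaElHajj:2012,BLT} then applies. Let $P_2(z)=\prod_\ell(z-z^2_\ell)$ and test against the moments of $z^iP_2(z)$, $i=0,\ldots,N-1$; these are combinations of the $2N$ moments, and the $c^2_k$ terms vanish identically. This yields
$$\Big|\sum_k c^1_k(z^1_k)^iP_2(z^1_k)\Big|\le C\varepsilon,\qquad i=0,\ldots,N-1.$$
Inverting the Vandermonde matrix of the $z^1_k$, which is well conditioned thanks to the separation \eqref{t3c}, gives $|P_2(z^1_k)|\le C\varepsilon/r_2$ for each $k$. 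Finally,
$$\prod_\ell|z^1_k-z^2_\ell|\ge|x^1_k-x^2_{\sigma(k)}|^N$$
by the minimality \eqref{t3d}, since every factor dominates the minimal one. This gives $|x^1_k-x^2_{\sigma(k)}|\le C\varepsilon^{1/N}$, which is \eqref{t3e}. The constants depend on $\mathrm{diam}\,\Omega$ but not on the unknown amplitudes of the second source.
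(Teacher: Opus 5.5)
Your proof is correct and is essentially the paper's argument: integrating in time is the same as applying Lemma~\ref{l1} with a time-independent test function, and inverting the Vandermonde system in the $z^1_k$ amounts, via Lagrange interpolation, to testing directly with the paper's single function $H_\ell(x)=e^{-A\cdot(x-x^1_\ell)/2}\prod_{k\neq\ell}(z-z^1_k)\prod_{m=1}^N(z-z^2_m)$, $z=x_1+ix_2$, which annihilates every source except $x^1_\ell$ and satisfies $|H_\ell(x^1_\ell)|\geq\delta_0^{N-1}|x^1_\ell-x^2_{\sigma(\ell)}|^N$. The step you flag as the main obstacle is already settled by estimate \eqref{t4a} of Theorem~\ref{t4}, $\norm{u(\cdot,T)}_{H^2(\Omega)}\leq C\norm{u}_{H^1(S\times(T_1,T))}$, whose proof needs no analyticity or time extension: the Carleman estimate of Proposition~\ref{p1} controls $u$ on a compact subinterval of $(T_1,T)$, and a forward parabolic estimate for a time cutoff of $u$ then carries this bound to $t=T$ with no logarithmic loss.
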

Now assuming that $N=1$, in a similar way to Theorem    \ref{t2}, we show the stable recovery of both the location $x$ and the amplitude $\lambda(t)$ of one point source.
\begin{Thm}\label{t100} Let $d=2$, $N=1$ and let the condition of Theorem \ref{t2} be fulfilled. Then, the estimates \eqref{t2ab1} and \eqref{t2ab2} hold true.
\end{Thm}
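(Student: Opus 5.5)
We will reproduce the proof of Theorem \ref{t2} line by line and isolate the places where the dimension $d=3$ enters. There are two such places: the explicit solutions of the adjoint elliptic equation, and the lower bound on the map from source location to boundary data. Set $u=u^1-u^2$, which solves \eqref{eq1} with zero initial data and right-hand side $\lambda^1\delta_{x^1}-\lambda^2\delta_{x^2}$. The regularity of Theorem \ref{t4} and the time-extension argument hold for $d\le 3$, and so does the Carleman estimate for the parabolic equation on $\Omega\times(T_1,T)$ with data on $S\times(T_1,T)$. Together they bound $\int_0^T\lambda^1(t)e^{-\kappa t}\,\d t\,\delta_{x^1}-\int_0^T\lambda^2(t)e^{-\kappa t}\,\d t\,\delta_{x^2}$. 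The bound holds for Laplace parameters $\kappa$ in a suitable range, is tested against elliptic adjoint solutions, and is controlled by $\mathcal D:=\norm{u}_{H^1(S\times(T_1,T))}+\norm{u}_{L^2(0,T;L^2(\partial\Omega))}$. We will check that none of these steps uses $d=3$, apart from Sobolev embeddings that are only better for $d=2$.

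The dimension-dependent step is the construction of test functions $w$ solving the adjoint equation
\[
-\Delta w - A\cdot\nabla w + (\mu+\kappa) w=0 .
\]
In $d=3$ one uses exponential solutions $e^{\zeta\cdot x}$ with $\zeta\in\mathbb C^3$ satisfying $\zeta\cdot\zeta-A\cdot\zeta=\mu+\kappa$. In $d=2$ we will use the same family, with $\zeta\in\mathbb C^2$ solving the same quadratic relation. This relation still defines a complex one-dimensional variety, rich enough to separate points. Plugging $w=e^{\zeta\cdot x}$ into the Green identity gives
\[
\Lambda^1(\kappa)e^{\zeta\cdot x^1}-\Lambda^2(\kappa)e^{\zeta\cdot x^2}=O(\mathcal D),
\]
with a constant depending on $|\zeta|$, where $\Lambda^j$ denotes the Laplace transform of $\lambda^j$. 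Taking two admissible $\zeta$ and forming the ratio eliminates the amplitudes. For instance, take $\zeta$ and $\zeta+i\eta e$ with $e$ a unit vector and $\eta$ fixed, adjusting $\zeta$ so that both lie on the variety. This yields
\[
|e^{i\eta e\cdot x^1}-e^{i\eta e\cdot x^2}|\,|\Lambda^1(\kappa)|\le C\mathcal D .
\]
Letting $e$ run over two independent directions gives $|x^1-x^2|\,|\Lambda^1(\kappa)|\le C\mathcal D$. We then need a lower bound $|\Lambda^1(\kappa)|\ge c(r_\star,M,s)$ for some $\kappa$ in a fixed compact set. This follows from \eqref{t2ab}, the $H^s$ bound, and the analyticity and injectivity of the Laplace transform via a compactness argument. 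That lower bound gives \eqref{t2ab1}.

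For \eqref{t2ab2}, once $|x^1-x^2|\le C\mathcal D$ is known, we substitute back and obtain $|\Lambda^1(\kappa)-\Lambda^2(\kappa)|\le C\mathcal D$ for $\kappa$ in an interval. The bound $\norm{\lambda}_{H^1_0}\le M$ from \eqref{t2b} then lets us invoke the same logarithmic stability estimate for the inverse Laplace transform as in the three-dimensional case. This part is dimension independent and yields the $\ln(\cdot)^{-2}$ rate.

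The main obstacle we expect is the construction in the plane of exponential solutions with the required growth control, together with the fact that the Carleman/time-extension bound must hold uniformly in $\zeta$. In two dimensions the variety has less freedom than in three, so we cannot choose $\zeta$ with $\mathrm{Re}\,\zeta$ small while $\mathrm{Im}\,\zeta$ spans all directions. Our first attempt will be to fix $\kappa$ large, parametrize $\zeta=\frac{A}{2}+\rho(\cos\theta,\sin\theta)$ with $\rho^2=\mu+\kappa+\frac{|A|^2}{4}$, and allow complex $\theta$ in a compact set. All constants then stay bounded, and the two-direction separation argument goes through with constants depending only on the stated parameters.
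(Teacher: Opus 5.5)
Your location argument is correct and differs from the paper's mainly in the choice of adjoint test functions and in how the lower bound on $|\Lambda^1|$ is obtained. The amplitude step, however, does not work as you stated it.

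\textbf{Location estimate.} For \eqref{t2ab1}, the paper first obtains \eqref{t2a} and \eqref{t2c} as in Theorem \ref{t2}. It then uses the single solution $w_k(x)=e^{-\frac{A\cdot(x-x^1)}{2}}(x_k-x_k^2)\exp\big((\mu+\frac{|A|^2}{4}+p)^{\frac12}(x_{3-k}-x^1_{3-k})\big)$, $k=1,2$. This function vanishes at $x^2$ and equals $x^1_k-x^2_k$ at $x^1$, so \eqref{t2c} gives $|\widehat{\lambda^1}(p)|\,|x^1_k-x^2_k|\le C\mathcal D$ at once.

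Eliminating $\Lambda^2$ from two plane waves, as you do, amounts to testing with $e^{\zeta'\cdot x^2}e^{\zeta\cdot x}-e^{\zeta\cdot x^2}e^{\zeta'\cdot x}$, which also vanishes at $x^2$. The paper's $w_k$ is essentially the limit of this function divided by $\eta$ as $\eta\to0$. The paper's version is shorter (no ratio, no sine bound); yours uses only exponentials.

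Two details need fixing. First, for $-\Delta-A\cdot\nabla+\mu+\kappa$ the dispersion relation is $\zeta\cdot\zeta+A\cdot\zeta=\mu+\kappa$. An admissible pair is $\zeta,\zeta'=-\frac A2\mp\frac{i\eta}{2}e+(\rho^2+\frac{\eta^2}{4})^{1/2}e^\perp$ with $\rho^2=\mu+\kappa+\frac{|A|^2}{4}$. Second, you need $\eta\,\mathrm{diam}(\Omega)\le\pi$; otherwise $|e^{i\eta e\cdot x^1}-e^{i\eta e\cdot x^2}|=2|\sin(\frac\eta2 e\cdot(x^1-x^2))|$ does not control $|e\cdot(x^1-x^2)|$.

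Your compactness argument for the lower bound is valid but non-constructive. It works because the $H^s$-ball is compact in $L^2$, and $\lambda\mapsto\max_{K_0}|\widehat\lambda|$ is $L^2$-continuous and, by analyticity, positive when $\norm{\lambda}_{L^2}\ge r_\star$. The paper's Plancherel argument instead gives an explicit $r_1$.

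Keeping $\kappa$ in a fixed compact set is in fact necessary. The boundary norm of $w_k$ at $p=D+1+i\tau_0$ is not bounded uniformly in $\tau_0$, while \eqref{t2a} as stated only bounds $\sup_{\tau\in\R}$. The same Plancherel computation, run with $\sup_{|\tau|\le R}$ and a fixed $R=R_0(r_\star,M,s)$, localizes $\tau_0$.

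The uniformity in $\zeta$ that you expect to be the main obstacle is automatic. Proposition \ref{p3} sees the test function only through $\norm{\partial_\nu v+(A\cdot\nu)v}_{L^2(\partial\Omega)}$, for every $\re(z)>D$.

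\textbf{Amplitude estimate.} For \eqref{t2ab2}, a bound $|\Lambda^1(\kappa)-\Lambda^2(\kappa)|\le C\mathcal D$ with a fixed constant on a bounded interval cannot give the $\ln^{-2}$ rate. Inverting the Laplace transform from a bounded real set is itself severely ill-posed. For a Gevrey bump $\phi\in C_c^\infty(0,T)$, the function $\phi(t)\cos(nt)/n$ is bounded in $H^1_0$, has $L^2$ norm $\sim1/n$, and has Laplace transform $O(e^{-cn^{1-\delta}})$ on such a set. So that information alone yields at best roughly $\ln(1/\mathcal D)^{-1}$.

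The three-dimensional argument you want to reuse works on the vertical segment $\kappa=D+1+i\tau$, $|\tau|\le R$, with $R\to\infty$. There your exponentials have $|\zeta|\sim R^{1/2}$, and the correct bound is $|\Lambda^1-\Lambda^2|\le CRe^{cR^{1/2}}\mathcal D$, cf. \eqref{t2g}.

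Substituting $|x^1-x^2|\le C\mathcal D$ back is fine on that segment. You can take $\omega\perp(x^1-x^2)$ as in the paper, so that $v_p(x^1)/v_p(x^2)=e^{-A\cdot(x^1-x^2)/2}$ does not depend on $\tau$. Or you can keep your general $\zeta$ at the cost of another factor $e^{cR^{1/2}}$.

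Plancherel and the tail bound $CM^2R^{-2}$ from $\lambda\in H^1_0$ then give $\norm{\lambda}_{L^2}^2\le C(e^{c'R^{1/2}}\mathcal D^2+R^{-2})$. It is precisely the choice $R\sim(\ln(1/\mathcal D))^2$ that produces the exponent $-2$. So you must track the $R$-dependence of the test-function norms; this is not a fixed-constant statement.
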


In addition to extending the results of Theorem \ref{t2} to the 2D case, we obtain in Theorem \ref{t3} the stable identification of locations of multiple point sources with a H\"older stability estimate, with the H\"{o}lder exponent agreeing with that for the elliptic problem \cite{ElBadiaElHajj:2012,ElBadiaElHajj:2013,BLT}. In the proof, we exploit properties of harmonic functions in the plane for building a specific class of solutions,
of parabolic equations with constant coefficients, that we use for the derivation of the estimate \eqref{t3e}. This approach is limited to the two-dimensional case and cannot be applied to dimension $d=1,3$. Moreover, it is based on condition \eqref{t3a}, which is stronger than condition \eqref{t2ab}, but is fulfilled for a large class of parameters $\lambda_k^1$, $k=1,\ldots,N$, e.g., functions of constant sign. The stable recovery of both amplitudes and location of the source are stated in Theorem \ref{t100}, which is similar to Theorem \ref{t2}. In contrast to the locations of multiple sources of Theorem \ref{t3}, we limit the stable recovery of the amplitude $\lambda(t)$ to one point source $N=1$.

The condition $\mu=-\frac{|A|^2}{4}$ in  Theorem \ref{t3} is a technical restriction related to the method used for simultaneously recovering the locations of multiple sources with unknown amplitudes, and it is not needed in the case of one single point source of Theorem \ref{t100}. The number of points in the two candidate point sources in  Theorem \ref{t3} is assumed to be identical so that there exists a permutation $\sigma$ of the set $\{1,\ldots,N\}$ satisfying condition \eqref{t3c}.
\subsection{Source recovery in an interval}
Finally, we consider the one-dimensional case. The results can be stated as follows.

\begin{Thm}\label{t5} Let $d=1$,  $\mu=-\frac{|A|^2}{4}$, $\Omega=(0,\ell)$ and, for $j=1,2$, we fix $\lambda^j_1\in L^2(0,T)$, satisfying condition \eqref{lambda} with   $\lambda=\lambda^j_1$,  $x^j_1\in\Omega$, $u_0\in H^1(\Omega)$. Assume that $\lambda_1^j\not\equiv0$, $j=1,2$, and, \eqref{t2b} and \eqref{t3a} are fulfilled with $N=1$. Let $u^j$, $j=1,2$, be the solution of problem \eqref{eq1} with $N=1$, $\lambda_j=\lambda_1^j$ and $x_j=x_1^j$. Then,  there exists a constant $C>0$  depending  on $r_2$,  $\ell$, $A$, $M$, $\mu$ and $T$   such that
\begin{align}\label{t5b}|x_1^1-x_1^2|&\leq C\Big(\norm{u^1(\ell,\cdot)-u^2(\ell,\cdot)}_{H^1((T_1,T))}+\sum_{j=0}^1\norm{u^1(j\ell,\cdot)-u^2(j\ell,\cdot)}_{L^2(0,T)}\Big),\\
\label{t5c}
\norm{\lambda^1_1-\lambda^2_1}_{L^2(0,T)}
&\leq C\ln\Big( 3+\big(\norm{u^1(\ell,\cdot)-u^2(\ell,\cdot)}_{H^1((T_1,T))}+\sum_{j=0}^1\norm{u^1(j\ell,\cdot)-u^2(j\ell,\cdot)}_{L^2(0,T)}\big)^{-1}\Big)^{-2}.
\end{align}
\end{Thm}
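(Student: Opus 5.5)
Let $u=u^1-u^2$, $\lambda=\lambda^1_1-\lambda^2_1$, $x^j=x^j_1$, and $\varepsilon:=\norm{u(\ell,\cdot)}_{H^1(T_1,T)}+\sum_{j=0}^1\norm{u(j\ell,\cdot)}_{L^2(0,T)}$. Then $u$ solves \eqref{eq1} with zero initial datum and source $\lambda^1\delta_{x^1}-\lambda^2\delta_{x^2}$. The first step removes the advection: set $w(x,t)=e^{-Ax/2}u(x,t)$. Since $\mu=-|A|^2/4$, $w$ solves
\[
\partial_t w-\partial_x^2 w=e^{-Ax^1/2}\lambda^1\delta_{x^1}-e^{-Ax^2/2}\lambda^2\delta_{x^2}
\]
with a Robin-type condition $\partial_x w+\frac A2 w=0$ at $x=0,\ell$. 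Hence the boundary values of $\partial_x w$ are determined by those of $w$. I will test this equation with explicit solutions $\phi$ of the adjoint heat equation, in the transposition sense of Definition \ref{def:transposition}. The natural choices are time independent harmonic functions in one dimension, namely $\phi=1$ and $\phi=x$. Integrating over $(0,\ell)\times(0,T)$ leaves a terminal term $\int_0^\ell w(x,T)\phi(x)\,\d x$, which is not directly measured.

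To control this terminal term, I use the regularity $u\in H^1(T_1,T;H^2(\Omega))$ from Theorem \ref{t4}. On $(T_1,T)$ the function $w$ solves a source-free heat equation, so in one dimension $w(\cdot,t)$ on $(T_1,T)$ is controlled by its Cauchy data at $x=\ell$, namely $w(\ell,\cdot)$ and $\partial_xw(\ell,\cdot)=-\frac A2w(\ell,\cdot)$. This is a lateral Cauchy problem, and I would treat it with a Carleman estimate as in \cite{IY}, or by the time extension argument used for Theorem \ref{t2}. Using the $H^1$ norm in time of $u(\ell,\cdot)$ on $(T_1,T)$, I expect a Lipschitz bound $\norm{w(\cdot,T)}_{L^2(0,\ell)}\le C\varepsilon$, possibly after averaging the final time over $(T_1,T)$ and using the a priori bound $M$. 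I expect this step to be the main obstacle, since a lateral Cauchy problem is normally only conditionally stable. If it fails, I would instead use $\phi(x,t)=e^{-\kappa^2 t}\cosh(\kappa x)$ together with the $L^2(0,T)$ boundary norms.

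With $\phi=1$ and $\phi=x$, the identities read
\[
e^{-Ax^1/2}\Lambda^1-e^{-Ax^2/2}\Lambda^2=O(\varepsilon),\qquad x^1e^{-Ax^1/2}\Lambda^1-x^2e^{-Ax^2/2}\Lambda^2=O(\varepsilon),
\]
where $\Lambda^j=\int_0^T\lambda^j\,\d t$. Multiplying the first identity by $x^2$ and subtracting gives $(x^1-x^2)e^{-Ax^1/2}\Lambda^1=O(\varepsilon)$. Since $|\Lambda^1|\ge r_2$ by \eqref{t3a}, this yields \eqref{t5b}.

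For the amplitude, I take $\phi(x,t)=e^{\kappa^2 t}\cos(\kappa (x-x^1))$ (after the shift, with the terminal term treated as above). Combined with $|x^1-x^2|\le C\varepsilon$, this gives $|\hat\lambda(\kappa)|\le Ce^{C\kappa^2}\varepsilon$ for Laplace-type transforms $\hat\lambda$ of $\lambda$ at frequency $\kappa^2$, up to the perturbation from $x^1\ne x^2$. By \eqref{t2b}, $\lambda\in H^1_0$ with norm bounded by $M$. I would then invert the Laplace transform via analytic continuation or a quantitative Müntz-type estimate, which gives $\norm{\lambda}_{L^2}\le C(\kappa_0^{-2}+e^{C\kappa_0^2}\varepsilon)$. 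Optimizing over $\kappa_0$ gives $\ln(3+\varepsilon^{-1})^{-2}$ (the exponent may need adjusting), and this is \eqref{t5c}.
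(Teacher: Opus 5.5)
\paragraph{Location estimate.} Your proof of \eqref{t5b} is the paper's argument. Taking $x^2$ times the $\phi=1$ identity minus the $\phi=x$ identity amounts, up to a constant factor, to testing via Lemma \ref{l1} with the paper's $g(x)=e^{-A(x-x_1^1)/2}(x-x_1^2)$. The step you flag as the main obstacle is already done. Only the final state is needed, not $u$ on all of $(T_1,T)$, and \eqref{t4a} with $S=\{\ell\}$ gives $\norm{u(\cdot,T)}_{H^2(0,\ell)}\le C\norm{u(\ell,\cdot)}_{H^1(T_1,T)}$. Final-state observability is Lipschitz, unlike the full lateral Cauchy problem.

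\paragraph{The gap is in \eqref{t5c}.} With real $\kappa$, your test functions $e^{\kappa^2t}\cos(\kappa(x-x^1))$ only produce $\int_0^T\lambda(t)e^{\kappa^2t}\,\d t$, i.e.\ the Laplace transform of $\lambda$ on the real axis. The inversion bound $\norm{\lambda}_{L^2}\le C(\kappa_0^{-2}+e^{C\kappa_0^2}\varepsilon)$ is asserted, not proved. Even granting it, the optimal choice $\kappa_0^2\sim\ln(1/\varepsilon)$ yields only $\ln(3+\varepsilon^{-1})^{-1}$, not exponent $-2$.

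In fact no argument using only real-axis information can reach exponent $-2$. Take $\lambda_\omega(t)=\omega^{-1}\eta(t)\sin(\omega t)$, with $\eta$ of Gevrey class $\gamma\in(1,2)$ supported in $[a,T_0]$, $0<a<T_0<T$. Then $\norm{\lambda_\omega}_{H^1(0,T)}\le C$ and $\norm{\lambda_\omega}_{L^2(0,T)}\ge c\,\omega^{-1}$. For $\abs{p}\le\omega^{1/\gamma}$, integrating by parts about $\omega^{1/\gamma}$ times (using the Gevrey bounds) gives the estimate below. For $\abs{p}\ge\omega^{1/\gamma}$, the support of $\eta$ gives it. The estimate is
$\abs{\int_0^T\lambda_\omega(t)e^{pt}\,\d t}\le Ce^{\abs{p}T}e^{-c\omega^{1/\gamma}}$ for all real $p$.
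So the bounds your test functions deliver hold with $\varepsilon=e^{-c\omega^{1/\gamma}}$, yet $\norm{\lambda_\omega}_{L^2(0,T)}\sim\ln(1/\varepsilon)^{-\gamma}\gg\ln(1/\varepsilon)^{-2}$. The theorem is not contradicted: the true boundary data of $\lambda_\omega$ are of size about $e^{-c\sqrt{\omega}}$. Your reduction to real frequencies simply discards the information the estimate needs.

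\paragraph{The missing idea.} Put the frequency on a vertical line in the complex plane, so that Plancherel applies with no analytic continuation. For $z=D+1+i\tau$, $\abs{\tau}\le R$, use $v_z(x)=e^{-A(x-x^1)/2}e^{z^{1/2}(x-x^1)}$. It solves $-v_z''-Av_z'+\mu v_z+zv_z=0$ because $\mu=-A^2/4$.

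The paper feeds $v_z$ into Proposition \ref{p3}, using the time extension and Laplace transform of $u$. You could equally insert $v(x,t)=e^{-zt}v_z(x)$ into Lemma \ref{l1} and bound the terminal term by \eqref{t4a}. Two facts then give $\sup_{\abs{\tau}\le R}\abs{\widehat{\lambda}(D+1+i\tau)}\le CRe^{cR^{1/2}}\varepsilon$. First, $v_z$ and $v_z'$ grow only like $(1+\abs{z}^{1/2})e^{c\abs{z}^{1/2}}$. Second, the error $\abs{v_z(x^2)-1}\le C(1+\abs{z}^{1/2})e^{c\abs{z}^{1/2}}\abs{x^1-x^2}$ is absorbed using \eqref{t5b}.

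Now apply Plancherel on the line $\re(z)=D+1$. Combined with the tail bound $CM^2R^{-2}$ from $\lambda\in H^1_0(0,T)$, this gives $\norm{\lambda}_{L^2(0,T)}^2\le C(e^{3cR^{1/2}}\varepsilon^2+R^{-2})$. Choosing $R^{1/2}\sim\ln(1/\varepsilon)$ yields exponent $-2$. It is the combination of Plancherel in the imaginary direction with the mild growth $e^{c\abs{z}^{1/2}}$ that produces this rate.
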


The proof of Theorem \ref{t5} is mostly based on suitable adaptation of the argument of Theorem \ref{t2}  to the one-dimensional case. The results are stated for one point source, which cannot be further relaxed since even for uniqueness results,  there are counterexamples for the recovery of multiple point sources  from boundary / partial interior measurements when $d=1$ (see e.g., \cite{ABE,HuangJinKian:2025}).

\section{Preliminary properties} \label{sec:prelim}
In this section we present several preliminary properties of solutions of problem \eqref{eq1} in the transposition sense \cite[Chapter 3, Section 2]{LM1} which will play a fundamental role in the analysis. By Sobolev embedding theorem \cite[Theorem 4.12, p. 85]{AdamsFournier:2003}, $H^2(\Omega)$ embeds continuously into $C(\overline{\Omega})$ for $d = 1,2,3$. Thus, the maps $\{\delta_{x_k}
 \}_{k=1}^N$ are continuous linear forms on $H^2(\Omega)$ and for any $v\in H^2(\Omega)$, we have $\delta _{x_k}(v) = v(x_k)$, $k =1,...,N$. This motivates the following definition of solutions in the transposition sense.
 \begin{definition}\label{def:transposition}
 A function $v \in L^2(0,T;L^2(\Omega))$ is said to be a solution of problem \eqref{eq1} in the transposition sense if for all $w \in  H^1(0,T;L^2(\Omega)) \cap L^2(0,T;H^2(\Omega)\cap H^1_0
(\Omega))$, satisfying $w|_{\Omega\times\{T\}}\equiv 0$ and $\partial_\nu w+(A\cdot \nu)w=0$ on $\partial\Omega\times(0,T)$, we have
\begin{equation*}
    \int_0^T\!\!\int_\Omega v(x,t)[-\partial_t w(x,t) -\Delta w(x,t) - A\cdot\nabla w(x,t)+\mu w(x,t) ]\d x\d t = \sum_{k=1}^N \int_0^T \lambda_k(t)w(x_k,t)\d t.
\end{equation*}
 \end{definition}

The unique existence of a solution $u$ to problem \eqref{eq1} in the sense of transposition follows directly from Riesz representation theorem.
The next result includes  regularity properties and the estimate from the boundary $\partial\Omega$ of values at the final time $t=T$ of solutions of problem \eqref{eq1}.

\begin{Thm}\label{t4} Let $u\in L^2(0,T;L^2(\Omega))$ be the solution in the transposition sense of problem \eqref{eq1} and let condition \eqref{lambda} be fulfilled. Then, for all $r\in(0,r_0)$,
we have
$$
u|_{\Omega_r\times (0,T)}\in L^2\left(0,T; H^2(\Omega_r)\right)\cap H^1(0,T;L^2(\Omega_r))\quad\mbox{and}\quad u|_{\Omega\times (T_1,T)}\in H^1(T_1,T;H^2(\Omega)),
$$
with
\bel{t4aa}\partial_tu(x,t)-\Delta u(x,t)+A\cdot\nabla u(x,t)+\mu u(x,t)=0,\quad (x,t)\in \Omega_r\times (0,T)\cup \Omega\times (T_1,T).\ee
 Moreover, for an open nonempty subset $S$  of the boundary $\partial\Omega$,  the following estimate holds
\bel{t4a} \norm{u(\cdot,T)}_{H^2(\Omega)}\leq C\norm{u}_{H^1(S\times(T_1,T))},\ee
with the constant $C>0$ depending on $\Omega$, $T$, $T_1$, $A$, $\mu$ and $S$.
\end{Thm}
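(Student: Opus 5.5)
I will prove the statement in three steps: regularity away from the sources, regularity on $(T_1,T)$ after the sources switch off, and the observability bound \eqref{t4a}.

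\textbf{Step 1: regularity away from the sources.} I split $u=u_1+u_2$. Here $u_1$ solves \eqref{eq1} with zero source and initial datum $u_0\in H^1(\Omega)$. Standard parabolic theory for the Neumann problem gives
$$u_1\in L^2(0,T;H^2(\Omega))\cap H^1(0,T;L^2(\Omega)).$$
The part $u_2$ carries the Dirac sources and has zero initial data. I take a cutoff $\chi\in C^\infty(\R^d)$ with $\chi=1$ outside $\cup_k B(x_k,r)$ and $\chi=0$ on $\cup_k B(x_k,r/2)$. Then $\chi u_2$ solves, in the transposition sense, a Neumann problem whose source is the commutator $-2\nabla\chi\cdot\nabla u_2-(\Delta\chi)u_2+(A\cdot\nabla\chi)u_2$, supported in annuli avoiding the $x_k$.

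To control this commutator I would write $u_2$ through the Neumann heat kernel $G$ of the constant-coefficient operator, $u_2(x,t)=\sum_k\int_0^t G(x,x_k,t-s)\lambda_k(s)\,\d s$. Away from the diagonal $x=x_k$, $G$ and all its derivatives are smooth and bounded, uniformly as $t-s\to0$, by Gaussian bounds. Hence $u_2\in L^2(0,T;H^m(\Omega_{r/2}))$ for every $m$, and $\partial_t u_2\in L^2(0,T;L^2(\Omega_{r/2}))$. Equivalently, one can bootstrap by local interior and boundary parabolic regularity, using that $u_2$ solves the homogeneous equation in $\Omega_{r/2}\times(0,T)$ in the distributional sense. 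This follows from Definition \ref{def:transposition} by testing with $w$ supported away from the $x_k$.

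The commutator source is then in $L^2(L^2)$, so $\chi u_2\in L^2(H^2)\cap H^1(L^2)$. This gives the first regularity claim and \eqref{t4aa} on $\Omega_r\times(0,T)$.

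\textbf{Step 2: regularity on $(T_1,T)$.} By \eqref{lambda} the source vanishes for $t>T_0$, so on $(T_0,T)$ the function $u$ solves the homogeneous Neumann problem. I first need $u(\cdot,t)\in L^2(\Omega)$ for a.e. $t\in(T_0,T_1)$, which follows from $u\in L^2(0,T;L^2(\Omega))$. Pick $t_*\in(T_0,T_1)$ with this property. The semigroup $e^{-t\mathcal A}$ generated by the Neumann realization $\mathcal A$ of $-\Delta+A\cdot\nabla+\mu$ is analytic. Therefore
$$u(\cdot,t)=e^{-(t-t_*)\mathcal A}u(\cdot,t_*)\in C^\infty\big([T_1,T];D(\mathcal A^m)\big)\quad\text{for all } m.$$
In particular $u\in H^1(T_1,T;H^2(\Omega))$ and \eqref{t4aa} holds there. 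One has to check that the transposition solution coincides with this semigroup solution on $(t_*,T)$, by uniqueness of transposition solutions with initial time $t_*$.

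\textbf{Step 3: the estimate \eqref{t4a}.} This is the main obstacle. On $\Omega\times(T_1,T)$, $u$ solves the homogeneous equation with $\partial_\nu u=0$, so $u|_{S\times(T_1,T)}$ is the full Cauchy data on $S$. I would use a parabolic Carleman estimate with a weight $\psi$ whose critical points are kept away from the region where it matters and which is adapted to $S$ (Imanouvilov--Yamamoto type, with weight $e^{2s\varphi}$ and $\varphi$ blowing up at $t=T_1$ and $t=T$). This gives observability from the lateral boundary data
$$\norm{u(\cdot,t_1)}_{L^2(\Omega)}\le C\big(\norm{u}_{L^2(S\times(T_1,T))}+\norm{\partial_\nu u}_{L^2(S\times(T_1,T))}\big)$$
for some intermediate $t_1\in(T_1,T)$; the normal-derivative term vanishes. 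Carleman estimates with boundary observation typically require an $H^1$ norm of $u$ on $S$, including tangential and time derivatives, to absorb the boundary terms, which explains $H^1(S\times(T_1,T))$. I would first try the standard route: extend $\Omega$ near $S$ to a larger domain, use interior observability, and convert boundary data into interior data via a lifting controlled by the $H^1$ Cauchy data. Finally, analyticity of the semigroup yields
$$\norm{u(\cdot,T)}_{H^2(\Omega)}\le C\norm{\mathcal A\,e^{-(T-t_1)\mathcal A}u(\cdot,t_1)}_{L^2(\Omega)}+C\norm{u(\cdot,T)}_{L^2(\Omega)}\le C\norm{u(\cdot,t_1)}_{L^2(\Omega)},$$
which together with the observability bound gives \eqref{t4a}. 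The delicate point is producing the final-time (rather than initial-time) bound with constants depending only on $\Omega,T,T_1,A,\mu,S$.
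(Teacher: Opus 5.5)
Your proposal is correct in outline. It has the same three-step architecture as the paper and the same key ingredient for \eqref{t4a}: an Imanuvilov--Yamamoto boundary Carleman estimate on $\Omega\times(T_1,T)$ (the paper's Proposition~\ref{p1}, with $\abs{\nabla\psi}>0$ on $\overline\Omega$ and $\partial_\nu\psi\le 0$ on $\partial\Omega\setminus S$). Its boundary terms on $S$ involve only $\partial_t u$, $\abs{\nabla u}=\abs{\nabla_\tau u}$ (since $\partial_\nu u=0$) and $u$, so they are bounded by $\norm{u}_{H^1(S\times(T_1,T))}$. The differences are in the auxiliary tools:
\begin{itemize}
\item In Step 1 the paper does not use the Neumann kernel. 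It writes $u=\sum_j y_j+v$, where $y_j$ is the explicit whole-space Gaussian with drift convolved in time with $\lambda_j$ (smooth off $x_j$, with all derivatives vanishing as $t-s\to0$), and $v$ solves a Neumann problem with the smooth, compatible data $-\sum_j\partial_\nu y_j$. This decomposition is precisely how one would justify your off-diagonal bounds for the Neumann kernel of a non-self-adjoint operator on a $C^2$ domain. It also shows that only $H^2$ regularity (not $H^m$ for every $m$) is available up to a $C^2$ boundary, which is all you need.
\item In Steps 2 and 3 the paper avoids semigroup theory. It multiplies $u$ by time cutoffs vanishing before $T_0$, so the Dirac terms drop out, and applies Lions--Magenes regularity repeatedly, including to $\partial_t(h_1u)$. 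It then carries the $H^1(L^2)\cap L^2(H^2)$ bound that the Carleman estimate gives on a middle time interval to $t=T$ with one more cutoff $h_2$. It never passes through an $L^2$ bound at a single time $t_1$ followed by analytic smoothing.
\end{itemize}
Your semigroup route is shorter and equally valid, provided you make two checks. First, identify the transposition solution with $e^{-(t-t_*)\mathcal A}u(\cdot,t_*)$, for instance by inserting a ramp $\theta_\epsilon(t)$ near a Lebesgue point $t_*$ of $u$ into the transposition identity. Second, pass from the space--time Carleman bound to $\norm{u(\cdot,t_1)}_{L^2(\Omega)}$ using the forward energy estimate and averaging in time. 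The paper's cutoff route sidesteps both checks.

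One caution concerns the fallback you mention for Step 3: extending $\Omega$ across $S$ and lifting the Cauchy data to reduce to interior observability. To produce an $L^2$ source term, that lifting needs the Dirichlet trace in $H^{3/2,3/4}(S\times(T_1,T))$, and this is not controlled by $\norm{u}_{H^1(S\times(T_1,T))}$. So that route would not give \eqref{t4a} as stated. The boundary Carleman estimate is the right tool here, and it is exactly what the paper uses.
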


In order to prove Theorem \ref{t4}, we first recall several preliminary properties related to Carleman estimates for solutions of parabolic equations. More precisely, in view of \cite[Lemma 2.3]{IY} and \cite[Lemma 1.2]{Im}, there exists a weight function $\psi \in {C}^2(\overline{\Omega})$ such that the following properties are fulfilled: (i) $\psi (x) > 0$ for all $x \in \Omega$;
(ii)  $\vert\nabla \psi (x)| >0 $ for all $x \in \overline{\Omega}$;
(iii) $\partial_{\nu} \psi (x) \leqslant 0 $ for all $x \in \partial \Omega \backslash S$.

Next we introduce the following weight functions for every $\rho \in (0,+\infty)$,
$$g(t)=(t-T_1)(T-t),\ \phi(x,t)=\frac{e^{\rho\psi(x)}}{g(t)},\ \alpha(x,t)=\frac{e^{\rho\psi(x)}-e^{-2\rho\norm{\psi}_{L^\infty(\Omega)}}}{g(t)},\quad (x,t)\in \overline{\Omega}\times(T_1,T).$$

Following \cite[Lemma 2.4]{IY}, we have the following Carleman estimate.
\begin{prop}\label{p1}
Let $v \in H^1(T_1,T;H^2(\Omega)) $.
Then there exists $\rho_0 \in (0,+\infty)$ such that for any $\rho\in [\rho_0,+\infty)$, there exists
$s_0 \in (0,+\infty)$, depending only
 $\Omega$, $S$, $T$, $T_1$, $A$, $\mu$ and $\rho$, such that the following estimate holds for  all $s \in [s_0,+\infty)$
\begin{align}
\label{p1aa}&\int_{T_1}^T\!\!\int_\Omega \left((s\phi)^{-1}\left( |\partial_tv|^2+\left|\sum_{i,j=1}^3\partial_{x_i}\partial_{x_j}v\right|^2\right)+s\phi|\nabla v|^2+(s\phi)^3 v^2\right)e^{2s\alpha}\d x\d t\\
\leq &C\int_{T_1}^T\!\!\int_\Omega g(t)^2 |\partial_tv-\Delta v+A\cdot\nabla v+\mu v|^2e^{2s\alpha}\d x\d t\nonumber\\
&+C\int_{T_1}^T\!\!\int_{S}((\partial_tv)^2+(s\phi)|\nabla v|^2+(s\phi)^3v^2)e^{2s\alpha}\d\sigma(x)\d t,\nonumber
\end{align}
for some positive constant $C$, depending only on  $\Omega$, $S$, $A$, $\mu$, $T$, $\rho$ and $s_0$.
\end{prop}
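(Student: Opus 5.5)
The estimate in Proposition \ref{p1} is a standard global parabolic Carleman estimate with boundary observation on $S$, and I would derive it by reducing to the known estimate of \cite[Lemma 2.4]{IY} (equivalently \cite[Lemma 1.2]{Im}). That estimate is stated for $L_0 v=\partial_t v-\Delta v$ with the weights $\phi,\alpha$ built from $\psi$, whose properties (i)--(iii) were fixed above. The plan has three steps: treat the lower order terms as perturbations, handle the boundary contributions, and check that the weight degenerates correctly at the two time endpoints.

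\textbf{Step 1: the principal part.} For $v\in H^1(T_1,T;H^2(\Omega))$, I would first establish
\begin{align*}
&\int_{T_1}^T\!\!\int_\Omega \left((s\phi)^{-1}\left(|\partial_t v|^2+|D^2 v|^2\right)+s\phi|\nabla v|^2+(s\phi)^3v^2\right)e^{2s\alpha}\d x\d t\\
&\leq C\int_{T_1}^T\!\!\int_\Omega |\partial_t v-\Delta v|^2e^{2s\alpha}\d x\d t+C\int_{T_1}^T\!\!\int_{\partial\Omega}(\dots)e^{2s\alpha}\d\sigma\d t.
\end{align*}
The proof is the usual one. Set $z=e^{s\alpha}v$ and split the conjugated operator into symmetric and antisymmetric parts. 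Expand the cross term, integrate by parts in $x$ and $t$, and use $|\nabla\psi|>0$ on $\overline\Omega$ to obtain positivity for $\rho\ge\rho_0$ and $s\ge s_0$.

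The time integrations by parts produce no endpoint terms. Indeed, $\alpha\to-\infty$ as $t\to T_1^+$ or $t\to T^+$, because $e^{\rho\psi}-e^{-2\rho\|\psi\|_\infty}>0$ and $g\to0$. Hence $z$ and all weighted quantities vanish at $t=T_1$ and $t=T$, which is exactly why $v$ need not vanish there.

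The boundary terms in $x$ contain $\partial_\nu\psi$. On $\partial\Omega\setminus S$ we have $\partial_\nu\psi\le0$, which gives them a favorable sign, or they cancel using the structure of the boundary integrals. The remaining terms on $S$ are bounded by $\int_S((\partial_tv)^2+s\phi|\nabla v|^2+(s\phi)^3v^2)e^{2s\alpha}$. The $\partial_t v$ term arises from the time-derivative boundary contribution $\int \partial_t z\,\partial_\nu z$, bounded via Young's inequality.

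Since the problem in the paper uses Neumann conditions, $v$ has no prescribed boundary values. I therefore keep the full trace of $\nabla v$ on $S$ and do not assume $v=0$ anywhere. On $\partial\Omega\setminus S$ the sign condition on $\partial_\nu\psi$ controls the dangerous terms, and the tangential and normal parts are treated separately.

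The factor $g(t)^2$ on the right-hand side and the weights $(s\phi)^{-1}$ on second derivatives are consistent with this form: multiplying by $(s\phi)^{-1}$ with $\phi\sim g^{-1}$ is how the estimate for $|\partial_t v|^2+|D^2v|^2$ is obtained from the equation. I would therefore follow \cite{IY}, which states the source term with the weight $g^2$.

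\textbf{Step 2: absorbing lower order terms.} Write $\partial_t v-\Delta v=(\partial_tv-\Delta v+A\cdot\nabla v+\mu v)-A\cdot\nabla v-\mu v$. Then
$$\int g^2|A\cdot\nabla v+\mu v|^2e^{2s\alpha}\le C\int (|\nabla v|^2+v^2)e^{2s\alpha}.$$
Since $g$ is bounded and $\phi\ge c>0$, this is bounded by $Cs^{-1}\int s\phi|\nabla v|^2e^{2s\alpha}+Cs^{-3}\int (s\phi)^3v^2e^{2s\alpha}$. These terms are absorbed into the left-hand side by enlarging $s_0$, which makes the constants depend on $A$ and $\mu$.

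\textbf{Main obstacle.} The delicate point is the handling of the boundary terms on $\partial\Omega\setminus S$ without a Dirichlet condition. I expect this to be settled by the choice of $\psi$ from \cite[Lemma 2.3]{IY}, namely condition (iii), together with the general-boundary form of \cite[Lemma 2.4]{IY}. My first attempt would be to cite that lemma directly for $L_0$ and then perform only the perturbation argument of Step 2.
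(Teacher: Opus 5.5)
Your route is the paper's. Its proof only translates time, $t=T_1+\tau$, and quotes \cite[Lemma 2.4]{IY} directly for the full operator $\partial_\tau-\Delta+A\cdot\nabla+\mu$. Your Step 2, which absorbs $A\cdot\nabla v+\mu v$ into $s\phi|\nabla v|^2$ and $(s\phi)^3v^2$ using $\phi\geq c>0$, is therefore redundant but correct. The problem is what you claim the citation, or your sketch of Step 1, delivers.

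The genuine gap is your claim that no boundary condition on $v$ is needed because (iii) controls the terms on $\partial\Omega\setminus S$. For $z=e^{s\alpha}v$ with no boundary condition, (iii) only gives a sign to the normal-derivative term $-c\,s\,(\partial_\nu\alpha)|\partial_\nu z|^2$, $c>0$, which is all that survives under a Dirichlet condition. The other boundary contributions are not controlled:
\begin{itemize}
\item $\partial_\nu z\,\partial_tz$ and $\partial_\nu z\,(\nabla_\tau\alpha\cdot\nabla_\tau z)$ have no sign;
\item $s\,(\partial_\nu\alpha)|\nabla_\tau z|^2$ has the wrong sign, precisely because of (iii).
\end{itemize}
In fact the inequality is false for arbitrary $v\in H^1(T_1,T;H^2(\Omega))$ whenever $\overline{S}\neq\partial\Omega$. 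Take $A=0$, $\mu=0$, $t_0\in(T_1,T)$, and a pole $y\notin\overline\Omega$ approaching a point of $\partial\Omega\setminus\overline{S}$. Let $v$ be the heat kernel centered at $(y,t_0)$, extended by zero for $t<t_0$ (use its $x$-derivative if $d=1$). Then $v$ is smooth on $\overline\Omega\times[T_1,T]$, $\partial_tv-\Delta v=0$, and every term on $S$ stays bounded. Meanwhile $\int_{t_0}^{(t_0+T)/2}\int_\Omega v^2\,\d x\,\d t\to\infty$, and the left-hand weights are bounded below there. Otherwise the estimate would give Lipschitz stability for the lateral Cauchy problem of the heat equation.

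So a boundary condition on $(\partial\Omega\setminus S)\times(T_1,T)$ must be added. In the only application, Step 3 of Theorem \ref{t4}, one has $\partial_\nu u=0$. The boundary terms must then be handled by a Carleman estimate proved for that condition, not by (iii) alone. The paper's one-line citation leaves the same hypothesis implicit, but your proposal explicitly commits to the false general version.

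There are two smaller errors.
\begin{itemize}
\item Your endpoint argument has the sign backwards. With the formula as printed, $e^{\rho\psi}-e^{-2\rho\norm{\psi}_{L^\infty(\Omega)}}>0$, so $\alpha\to+\infty$ and $e^{2s\alpha}$ blows up at $t=T_1$ and $t=T$ (approached from the left, not $T^+$). The weight of \cite{IY} uses $e^{\rho\psi}-e^{2\rho\norm{\psi}_{L^\infty(\Omega)}}<0$, and only after this correction do the time-boundary terms vanish.
\item Your Step 1 produces $|\partial_tv-\Delta v|^2e^{2s\alpha}$ on the right. Since $g$ is bounded above but not below, the $g(t)^2$-weighted form in \eqref{p1aa} does not follow from it. Your remark that $\phi\sim g^{-1}$ does not bridge this: applying the estimate to $gv$ yields a $g^2$ weight only at the price of lowering every left-hand weight by $\phi^{-2}$. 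This is harmless for the paper, since the estimate is only applied to $u$ with $\partial_tu-\Delta u+A\cdot\nabla u+\mu u=0$ on $\Omega\times(T_1,T)$.
\end{itemize}
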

\begin{proof}
Using the weight function $\psi\in C^2(\overline{\Omega})$, we define the functions $\tilde{v}$, $\tilde{g}$, $\tilde{\phi}$ and $\tilde{\alpha}$ on $\Omega\times (0,T-T_1)$ by
\begin{align*}
\tilde{v}(\tau,x)&=v(x,T_1+\tau),\quad (x,\tau)\in\Omega\times (0,T-T_1),\\
\tilde{g}(\tau)&=\tau (T-T_1-\tau),\ \tilde{\phi}(x,\tau)=\frac{e^{\rho\psi(x)}}{\tilde{g}(\tau)},\ \tilde{\alpha}(x,\tau)=\frac{e^{\rho\psi(x)}-e^{-2\rho\norm{\psi}_{L^\infty(\Omega)}}}{\tilde{g}(\tau)},\quad (x,\tau)\in\Omega\times (0,T-T_1).
\end{align*}
By \cite[Lemma 2.4]{IY}, there exists
$s_0=s_0(\rho) \in (0,+\infty)$, depending only
 $\Omega$, $S$, $T$, $T_1$, $A$, $\mu$ and $\rho$, such that the following estimate
\begin{align}\label{p1ab}&\int_{0}^{T-T_1}\!\!\!\!\int_\Omega \left((s\tilde{\phi})^{-1}\left( |\partial_\tau\tilde{v}|^2+\left|\sum_{i,j=1}^3\partial_{x_i}\partial_{x_j}\tilde{v}\right|^2\right)+s\tilde{\phi}|\nabla\tilde{v}|^2+(s\tilde{\phi})^3 \tilde{v}^2\right)e^{2s\tilde{\alpha}}\d x\d \tau\\
\leq &C\int_{0}^{T-T_1}\!\!\!\!\int_\Omega \tilde{g}(t)^2 |\partial_\tau\tilde{v}-\Delta \tilde{v}+A\cdot\nabla \tilde{v}+\mu \tilde{v}|^2e^{2s\tilde{\alpha}}\d x\d \tau\nonumber\\
&+C\int_{0}^{T-T_1}\!\!\!\!\int_{S}((\partial_\tau\tilde{v})^2+(s\tilde{\phi})|\nabla \tilde{v}|^2+(s\tilde{\phi})^3\tilde{v}^2)e^{2s\tilde{\alpha}}\d\sigma(x)\d \tau\nonumber\end{align}
holds for  all $s \in [s_0,+\infty)$ and some positive constant $C$, depending only on  $\Omega$, $S$, $A$, $\mu$, $T$, $\rho$ and $s_0$. Finally by observing the relations
$$\tilde{g}(\tau)=g(T_1+\tau),\ \tilde{\phi}(x,\tau)=\phi(x,T_1+\tau),\ \tilde{\alpha}(x,\tau)=\alpha(x,T_1+\tau),\quad (x,\tau)\in\Omega\times (0,T-T_1),$$
and applying the change of variables $t=T_1+\tau$, we obtain the estimate \eqref{p1aa} from \eqref{p1ab}.
\end{proof}

Using Proposition \ref{p1}, we can prove Theorem \ref{t4}.
\begin{proof}
Without loss of generality, we may assume that $u_0\equiv0$. We divide the proof into three steps.

\noindent\textbf{Step 1.} In this step, we prove that, for all $r\in(0,r_0)$, we have the following improved local regularity
\begin{equation*}
u|_{\Omega_r\times (0,T)}\in L^2\left(0,T; H^2(\Omega_r)\right)\cap H^1(0,T;L^2(\Omega_r)).
\end{equation*}
For every $j\in\{1,\ldots,N\}$, define  $y_j$  on $\R^d\times[0,T]$ by
$$y_j(x,t)= \frac{1}{(4\pi)^{\frac{d}{2}}} e^{\frac{A}{2}\cdot(x-x_j)}\int_0^t \frac{e^{\tau_0(t-s)}\lambda_j(s) e^{-\frac{\vert x-x_j\vert^2}{4(t-s)}}}{(t-s)^{\frac{d}{2}}}\d s,\quad (x,t)\in\R^d\times[0,T],$$
with $ \tau_0=-\frac{|A|^2}{4}-\mu$.
It is well known that $y_j|_{\Omega\times(0,T)}\in L^2(0,T;L^2(\Omega))$, $y_j\in C^{\infty}((\R^d\setminus\{x_j\})\times[0,T])$ and, for all $w\in C^2(\overline{\Omega}\times[0,T])$ satisfying $w|_{\overline{\Omega}\times\{T\}}\equiv0$ and $(\partial_\nu w+(A\cdot\nu) w)|_{\partial\Omega\times[0,T]}\equiv0$, we have
\begin{equation}\label{trans11}
\int_0^T\!\!\int_\Omega y_j(-\partial_tw-\Delta w-A\cdot\nabla w+\mu w)\d x\d t=\int_0^T\lambda_j(t)w(x_j,t)\d t+\int_0^T\!\!\int_{\partial\Omega} w(x,t)\partial_\nu y_j(x,t)\d \sigma(x)\d t.
\end{equation}
By combining the continuous embedding of $H^2(\Omega)$ into $C(\overline{\Omega})$ (for $d=1,2,3$) with a density argument, we can prove that the identity \eqref{trans11} holds for any
$w\in H^1(0,T;L^2(\Omega))\cap L^2(0,T;H^2(\Omega))$ satisfying $w|_{\Omega\times\{T\}}\equiv0$ and $(\partial_\nu w+(A\cdot\nu) w)|_{\partial\Omega\times(0,T]}\equiv0$.
Now fix $g$ defined on $\partial\Omega\times[0,T]$ by
$$g(x,t)=-\sum_{j=1}^N\partial_\nu y_j(x,t),\quad (x,t)\in \partial\Omega\times[0,T],$$
and notice that $g\in C^2(\partial\Omega\times[0,T])$
with $\partial_t^kg(x,0)=0$, $k=0,1,2$, $x\in\partial\Omega$. Thus, in view of \cite[Theorem 6.1, Chapter 4]{LM2}, the initial boundary value problem
$$
\left\{\begin{aligned}
\partial_tv -\Delta v+A\cdot\nabla v+\mu v &= 0, \quad \mbox{in }\Omega\times(0,T],\\
 \partial_\nu v&= g, \quad \mbox{on } \partial\Omega\times(0,T], \\
v&=0, \quad \mbox{in } \Omega\times \{0\}
\end{aligned}\right.
$$
admits a unique solution $v\in H^1(0,T;L^2(\Omega))\cap L^2(0,T;H^2(\Omega))$. By combining this fact with the identity \eqref{trans11} and integrating by parts, we deduce that the map $u_\star\in L^2(0,T;L^2(\Omega))$, defined by
$$u_\star(x,t)=\sum_{j=1}^Ny_j(x,t)+v(x,t),\quad (x,t)\in \Omega\times(0,T],$$
satisfies the condition
$$
\int_0^T\!\!\int_\Omega u_\star(-\partial_tw-\Delta w-A\cdot\nabla w+\mu w)\d x\d t=\sum_{j=1}^N\int_0^T\lambda_j(t)w(x_j,t)\d t
$$
for any
$w\in H^1(0,T;L^2(\Omega))\cap L^2(0,T;H^2(\Omega))$ such that $w|_{\Omega\times\{T\}}\equiv0$ and $(\partial_\nu w+(A\cdot\nu) w)|_{\partial\Omega\times(0,T]}\equiv0$.
Therefore, $u_\star$ is a solution in the transposition sense of problem \eqref{eq1} and by the uniqueness of solutions in the transposition sense \cite[Theorem 12.1, Chapter 4]{LM2}, we have $u=u_\star$. Thus it follows that, for any  $r\in(0,r_0)$,
$$u|_{\Omega_r\times (0,T)}=\sum_{j=1}^Ny_j|_{\Omega_r\times (0,T)}+v|_{\Omega_r\times (0,T)}\in L^2\left(0,T; H^2(\Omega_r)\right)\cap H^1(0,T;L^2(\Omega_r)).$$

\noindent\textbf{Step 2.} In this step we will show that $u|_{\Omega\times (T_1,T)}\in H^1(T_1,T;H^2(\Omega))$. To this end, fix $h\in C^\infty([0,T];[0,1])$ such that,  $h=0$ on $[0,T_0+\delta]$ and $h=1$ on $[T_0+3\delta,T]$, with $\delta=(T_1-T_0)/10$. Let $w(x,t)=h(t)u(x,t)$ for $(x,t)\in \Omega\times(0,T]$. Then $w$ solves in the transposition sense the problem
$$
\left\{\begin{aligned}
\partial_tw -\Delta w+A\cdot\nabla w+\mu w &= H, \quad \mbox{in }\Omega\times(0,T],\\
 \partial_\nu w&= 0, \quad \mbox{on } \partial\Omega\times(0,T), \\
w&=0, \quad\mbox{in } \Omega\times \{0\},
\end{aligned}\right.
$$
with
$$H(x,t)=h(t)\sum_{k=1}^N\lambda_k(t)\delta_{x_k}(x)+h'(t)u(x,t),\quad (x,t)\in \Omega\times(0,T].$$
By condition \eqref{lambda} on the amplitude $\lambda_k$, we deduce that $h\lambda_k\equiv0$ on $[0,T]$ for $k=1,\ldots,N$, which implies $H(x,t)=h'(t)u(x,t)$, for $(x,t)\in \Omega\times(0,T)$, and $H\in L^2(0,T;L^2(\Omega))$. Thus, by \cite[Chapter 4, Theorem 6.1]{LM2}, we deduce that $w\in H^1(0,T;L^2(\Omega))\cap L^2(0,T;H^2(\Omega))$. Now, fix $h_1\in C^\infty([0,T];[0,1])$ such that,  $h_1=0$ on $[0,T_0+4\delta]$ and $h_1=1$ on $[T_1-\delta,T]$ and fix $w_1(x,t)=h_1(t)u(x,t)$, $(x,t)\in \Omega\times(0,T]$. One can easily check that $h_1h=h_1$ and $w_1(x,t)=h_1(t)u(x,t)=h_1(t)w(x,t)$, for $(x,t)\in \Omega\times(0,T)$. Moreover, by noting $w\in H^1(0,T;L^2(\Omega))\cap L^2(0,T;H^2(\Omega))$, we deduce that $w_1\in H^1(0,T;L^2(\Omega))\cap L^2(0,T;H^2(\Omega))$.
Then, by repeating the preceding argument, we deduce that $w_1$ solves
$$
\left\{\begin{aligned}
\partial_tw_1 -\Delta w_1+A\cdot\nabla w_1+\mu w_1 &= H_1, \quad \mbox{in }\Omega\times(0,T],\\
 \partial_\nu w_1&= 0,\quad \mbox{on } \partial\Omega\times(0,T), \\
w_1&=0, \quad \mbox{in } \Omega\times \{0\},
\end{aligned}\right.$$
with $$H_1(x,t)=\underbrace{h_1(t)h'(t)}_{=0}u(x,t)+h_1'(t)w(x,t)=h_1'(t)w(x,t),\quad (x,t)\in\Omega\times(0,T).$$
Since the function $H_1\in H^1(0,T;L^2(\Omega))\cap L^2(0,T;H^2(\Omega))$ satisfies the condition $H_1(x,0)=0$, $x\in\Omega$, one can check that
$w_2=\partial_tw_1$ solves the problem
$$
\left\{\begin{aligned}
\partial_tw_2 -\Delta w_2+A\cdot\nabla w_2+\mu w_2 &= \partial_tH_1, \quad \mbox{in }\Omega\times(0,T],\\
 \partial_\nu w_2&= 0,\quad \mbox{on } \partial\Omega\times(0,T), \\
w_2&=0, \quad \mbox{in } \Omega\times \{0\},
\end{aligned}\right.$$
and, by \cite[Chapter 4, Theorem 6.1]{LM2} again, we deduce that
$\partial_tw_1=w_2\in H^1(0,T;L^2(\Omega))\cap L^2(0,T;H^2(\Omega))$. Therefore, we have   $w_1\in H^1(0,T;H^2(\Omega))$,  which implies $u|_{\Omega\times [T_1,T]}=w_1|_{\Omega\times [T_1,T]}\in H^1(T_1,T;H^2(\Omega))$. By combining the  regularity properties of Steps 1 and 2, we obtain the assertion \eqref{t4aa}.

\ \\
\noindent\textbf{Step 3.} At this step, we show the estimate \eqref{t4a}. By fixing $\epsilon_0=(T-T_1)/8$ and applying the Carleman estimate in Proposition \ref{p1}, we get
\bel{t4b} \norm{u}_{H^1(T_1+2\epsilon_0,T-2\epsilon_0;L^2(\Omega))}+\norm{u}_{L^2(T_1+2\epsilon_0,T-2\epsilon_0;H^2(\Omega))}\leq C \norm{u}_{H^1(S\times (T_1,T))}.\ee
Fix $h_2\in C^\infty([0,T];[0,1])$ such that, $h_2=0$ on $[0,T_1+3\epsilon_0]$ and $h_2=1$ on $[T-3\epsilon_0,T]$ and let $u_1(x,t)=h_2(t)u(x,t)$, $(x,t)\in \Omega\times(0,T]$. By repeating the argument in Step 2, we deduce that $u_1\in H^1(0,T;H^2(\Omega))$ solves
$$
\left\{\begin{aligned}
\partial_tu_1 -\Delta u_1+A\cdot\nabla u_1+\mu u_1&=h_2'(t)u(x,t), \quad \mbox{in }\Omega\times(0,T],\\
 \partial_\nu u_1&= 0,\quad \mbox{on } \partial\Omega\times(0,T), \\
u_1&=0, \quad \mbox{in } \Omega\times \{0\}.
\end{aligned}\right.
$$
From \cite[Chapter 4, Theorem 6.1]{LM2} and \eqref{t4b}, we get
$$\begin{aligned}
&\norm{u(\cdot,T)}_{H^2(\Omega)}=\norm{u_1(T,\cdot)}_{H^2(\Omega)}\\
\leq& C\norm{u_1}_{H^1(0,T;H^2(\Omega))}
\leq C\norm{h_2'(t)u}_{H^1(0,T;L^2(\Omega))}\\
\leq &C\norm{u}_{H^1(T_1+2\epsilon_0,T-2\epsilon_0;L^2(\Omega))}\leq C \norm{u}_{H^1(S\times (T_1,T)}.\end{aligned}$$
From this last estimate, we deduce the estimate \eqref{t4a}.
\end{proof}

In view of Theorem \ref{t4}, we next consider a suitable time extension of the solution $u$ of problem \eqref{eq1} to $\Omega\times(0,+\infty)$ as well as its Laplace transform in time. More precisely, let $v_T$ be the solution of the following initial boundary value problem
\begin{equation}\label{eq2}
\left\{\begin{aligned}
\partial_tv -\Delta v+A\cdot\nabla v+\mu v &=  0,\quad \mbox{in }\Omega\times(T,+\infty),\\
 \partial_\nu v&= 0, \quad \mbox{on } \partial\Omega\times(T,+\infty), \\
v&=u_T, \quad \mbox{in } \Omega\times \{T\},
\end{aligned}\right.
\end{equation}
with $u_T=u(\cdot,T)$.
Since $u(\cdot,T)\in H^2(\Omega)$ satisfies $\partial_\nu u(\cdot,T)|_{\partial\Omega}\equiv 0$ and by \cite[Chapter 4, Theorem 6.1]{LM2}, we deduce that $v_T\in H^1_{loc}([T,+\infty);L^2(\Omega)) \cap L^2_{loc}([T,+\infty);H^2(\Omega))$. Moreover,
since the operator $\mathcal{L}=-\Delta +A\cdot\nabla +\mu$ with its domain $D(\mathcal{L})=\{f\in H^2(\Omega):\ \partial_\nu f|_{\partial\Omega}=0\}$ is sectorial \cite[Theorem 2.1]{A} (see also \cite[Theorem 2.5.1]{LLMP}), we deduce that there exists $D>|\mu|$ depending on $\Omega$, $A$ and $\mu$ such that for all $u_T\in \{v\in H^2(\Omega):\ \partial_\nu  u_T|_{\partial\Omega}=0\}$, we have
\begin{align}
(x,t)\mapsto e^{-Dt}v(x,t)&\in L^2([T,+\infty);H^2(\Omega))\cap H^1([T,+\infty);L^2(\Omega)),\nonumber\\
\label{est1}\norm{e^{-Dt}v}_{L^2([T,+\infty);H^2(\Omega))}&+\norm{e^{-Dt}v}_{H^1([T,+\infty);L^2(\Omega))}\leq C\norm{u_T}_{H^2(\Omega)},
\end{align}
with $C>0$ depending only on $T>0$, $\Omega$, $A$ and $\mu$. Fix now $u_\star$ defined on $\Omega\times(0,+\infty)$ by
$u_\star=u$ on $\Omega\times(0,T]$ and $u_\star=v_T$ on $\Omega\times(T,+\infty)$. For all $z\in\mathbb C$, consider the following boundary value problem
\begin{equation}\label{eq3}
\left\{\begin{aligned}
 -\Delta w_z+A\cdot\nabla w_z+\mu w_z +zw&=  u_0+\sum_{k=1}^N\widehat{\lambda}_k(z)\delta_{x_k}, \quad \mbox{in }\Omega,\\
 \partial_\nu w_z&= 0, \quad \mbox{on } \partial\Omega,
\end{aligned}\right.
\end{equation}
where $\widehat{\lambda}_k(z)$, $k=1,\ldots,N$, denotes the Laplace transform in time of $\lambda_k$ extended by zero given by
$\widehat{\lambda}_k(z)=\int_0^T\lambda_k(t)e^{-zt}\d t.$
Note that, by enlarging the constant $D>|\mu|$ of the estimate \eqref{est1},  we may assume without loss of generality that, for all $z\in\mathbb C$ satisfying $\re(z)>D$, problem \eqref{eq3} admits a unique solution in the transposition sense $w_z\in L^2(\Omega)$. Using these properties, we prove that the Laplace transform in time of the extension $u_\star$ of $u$ at $z$ is given by $w_z$.

\begin{prop}\label{p2} For all $z\in\mathbb C$ satisfying $\re(z)>D$, the Laplace transform in time $\widehat{u}_\star$ of the extension $u_\star$ is well defined at $z$ and there holds
\bel{p2a} \widehat{u}_\star(\cdot,z)=\int_0^{T}e^{-zt}u(\cdot,t)\d t+\int_{T}^{+\infty}e^{-zt}v_T(\cdot,t)\d t=w_z,\ee
where $w_z\in L^2(\Omega)$ is the unique solution in the transposition sense of problem \eqref{eq3}.
\end{prop}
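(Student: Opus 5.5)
The plan is to verify directly that $\widehat{u}_\star(\cdot,z)$ satisfies the transposition identity defining the solution of problem \eqref{eq3}, and then to conclude by uniqueness.

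\textbf{Well-definedness.} On $(0,T)$ we have $u\in L^2(0,T;L^2(\Omega))$, so $\int_0^T e^{-zt}u(\cdot,t)\,\d t\in L^2(\Omega)$ for every $z$. On $(T,+\infty)$ we use the estimate \eqref{est1}: $e^{-Dt}v_T\in L^2(T,\infty;L^2(\Omega))$. Hence for $\re(z)>D$ we can write $e^{-zt}v_T=e^{-(z-D)t}e^{-Dt}v_T$, and Cauchy--Schwarz in time gives absolute convergence of the second integral in $L^2(\Omega)$. The same bound with $H^2$ and $H^1$ norms shows that $\int_T^\infty e^{-zt}v_T\,\d t\in H^2(\Omega)$ and $\int_T^\infty e^{-zt}\partial_tv_T\,\d t$ converges. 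Moreover $e^{-zt}v_T(\cdot,t)\to0$ in $L^2(\Omega)$ as $t\to\infty$, possibly along a sequence, which follows from $e^{-Dt}v_T\in H^1(T,\infty;L^2)$.

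\textbf{Identity.} Fix a test function $\varphi\in H^2(\Omega)$ with $\partial_\nu\varphi+(A\cdot\nu)\varphi=0$ on $\partial\Omega$; this is the adjoint Neumann-type condition appearing in Definition \ref{def:transposition}. Let $\mathcal L^*\varphi=-\Delta\varphi-A\cdot\nabla\varphi+\mu\varphi$. The goal is
$$\int_\Omega \widehat u_\star(\cdot,z)\,(\mathcal L^*\varphi+z\varphi)\,\d x=\int_\Omega u_0\varphi\,\d x+\sum_{k}\widehat\lambda_k(z)\varphi(x_k).$$
On $(0,T)$ we use the definition of transposition solution with the test function $w(x,t)=e^{-zt}\varphi(x)\chi_n(t)$, where $\chi_n$ is a smooth cutoff equal to $1$ on $[0,T-1/n]$ and vanishing at $T$. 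This is needed because the definition requires $w(\cdot,T)=0$. We work with real and imaginary parts, or note that everything is linear. One also has to keep track of the initial datum: the definition as written has $u_0$ absorbed, so the term $\int_\Omega u_0w(\cdot,0)$ must be added; this is the standard transposition formulation with initial data. Since $-\partial_t w=ze^{-zt}\chi_n\varphi-e^{-zt}\chi_n'\varphi$, we obtain
$$\int_0^T\!\!\int_\Omega u\,e^{-zt}\chi_n(\mathcal L^*\varphi+z\varphi)-\int_0^T\!\!\int_\Omega u\,e^{-zt}\chi_n'\varphi=\int_\Omega u_0\varphi+\sum_k\int_0^T\lambda_k e^{-zt}\chi_n\varphi(x_k)\,\d t.$$
As $n\to\infty$, Theorem \ref{t4} gives $u\in C([T_1,T];H^2(\Omega))$. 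Hence $-\int_0^T\chi_n'\,e^{-zt}\int_\Omega u\varphi\to e^{-zT}\int_\Omega u(\cdot,T)\varphi$, while the $\lambda_k$ terms converge to $\widehat\lambda_k(z)\varphi(x_k)$ by \eqref{lambda} and dominated convergence. On $(T,\infty)$, $v_T$ is a strong solution. We multiply the equation by $e^{-zt}\varphi$, integrate by parts in $x$ (the boundary terms cancel by $\partial_\nu v_T=0$ and the condition on $\varphi$) and in $t$. This gives
$$\int_T^\infty\!\!\int_\Omega e^{-zt}v_T(\mathcal L^*\varphi+z\varphi)=e^{-zT}\int_\Omega u_T\varphi,$$
using the vanishing at infinity. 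Adding the two identities, the $e^{-zT}\int u_T\varphi$ terms cancel. This yields the identity above for all admissible $\varphi$.

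\textbf{Conclusion and obstacle.} The identity shows that $\widehat u_\star(\cdot,z)$ is a transposition solution of \eqref{eq3}. By the assumed uniqueness for $\re(z)>D$, it equals $w_z$. The delicate step is the passage to the limit at $t=T$, i.e.\ justifying the boundary term $e^{-zT}\int u(\cdot,T)\varphi$ from the transposition formulation. This is exactly where the improved regularity $u\in H^1(T_1,T;H^2(\Omega))$ of Theorem \ref{t4} is used: it gives continuity of $u$ in time near $T$. The point sources vanish there by \eqref{lambda}, which allows integrating by parts classically on $(T_1,T)$ as an alternative to the cutoff argument.
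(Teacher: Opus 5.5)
Your proof is correct and follows essentially the paper's route. You test the transposition formulation with a time cutoff times $e^{-zt}\varphi$, integrate the $(T,\infty)$ part by parts using \eqref{est1} and the decay of $e^{-zt}v_T$, cancel the $e^{-zT}\int_\Omega u(\cdot,T)\varphi\,\d x$ terms, and conclude by uniqueness. The paper instead uses a fixed cutoff $h$ and treats $\int_0^T e^{-zt}(1-h)u\,\d t$ as a strong solution via $u\in H^1(T_1,T;H^2(\Omega))$, whereas your shrinking cutoffs $\chi_n$ only need continuity of $t\mapsto\int_\Omega u(\cdot,t)\varphi\,\d x$ at $T$. You also keep $u_0$ explicitly rather than setting it to zero.
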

\begin{proof} Without loss of generality, we may assume that $u_0\equiv0$. Fix $z\in\mathbb C$ satisfying $\re(z)>D$. In view of the estimate \eqref{est1}, the map $t\mapsto e^{-zt}u_\star(\cdot,t)$ belongs to $ L^1(0,+\infty;L^2(\Omega))$ with
\bel{p2b} \widehat{u}_\star(\cdot,z)=\int_0^{T}e^{-zt}u(\cdot,t)\d t+\int_{T}^{+\infty}e^{-zt}v_T(\cdot,t)\d t:=f_z+g_z.
\ee
By the governing equation in \eqref{eq2}, the estimate \eqref{est1} and integration by parts, we deduce that $g_z\in H^2(\Omega)$ solves
\begin{equation}\label{p2c}
\left\{\begin{aligned}
 -\Delta g_z+A\cdot\nabla g_z+\mu g_z +zg_z&=  u(\cdot,T)e^{-zT}, \quad \mbox{in }\Omega,\\
 \partial_\nu g_z&= 0, \quad \mbox{on } \partial\Omega,
\end{aligned}\right.
\end{equation}
Now fix $\delta\in(0,(T-T_1)/4)$ and consider $h\in C^\infty([0,T])$ satisfying $h=1$ on $[0,T_1+\delta]$ and $h=0$ on $[T-\delta,T]$. Note that
$$f_z=\int_0^{T}e^{-zt}h(t)u(\cdot,t)\d t+\int_0^{T}e^{-zt}(1-h(t))u(\cdot,t)\d t:=f_z^1+f_z^2.$$
From Theorem \ref{t4}, we have $t\mapsto e^{-zt}(1-h(t))u(\cdot,t)\in H^1(0,T;H^2(\Omega))$ and, by repeating the preceding argumentation, we deduce that $f_z^2\in H^2(\Omega)$ solves
\begin{equation}\label{p2d}
\left\{\begin{aligned}
 -\Delta f_z^2+A\cdot\nabla f_z^2+\mu f_z^2 +zf_z^2&=-  \int_0^{T}e^{-zt}h'(t)u(\cdot,t)\d t-u(\cdot,T)e^{-zT}, \quad \mbox{in }\Omega,\\
 \partial_\nu f_z^2&= 0, \quad \mbox{on } \partial\Omega.
\end{aligned}\right.
\end{equation}
By fixing $\phi\in \{w\in H^2(\Omega):(\partial_\nu w+(A\cdot\nu)w)|_{\partial\Omega}\equiv0\}$ and noting that $u$ solves problem \eqref{eq1} in the transposition sense and applying the condition \eqref{lambda}, we obtain
\begin{align*}
&\int_\Omega f_z^1( -\Delta \phi-\nabla\cdot(A\phi)+\mu \phi +z\phi)\d x\\
=&\int_0^T\!\!\int_{\Omega}u(-\partial_t-\Delta -\nabla\cdot(A\cdot)+\mu  )h(t)e^{-zt}\phi(x)\d x\d t+\int_\Omega \left(\int_0^{T}e^{-zt}h'(t)u(\cdot,t)\d t\right)\phi \d x\\
=&\sum_{k=1}^N\left(\int_0^{T_0}\lambda_k(t)e^{-zt}\underbrace{h(t)}_{=1}\d t\right)\phi(x_k)+\int_\Omega \left(\int_0^{T}e^{-zt}h'(t)u(\cdot,t)\d t\right)\phi\d x\\
=& \sum_{k=1}^N\widehat{\lambda_k}(z)\phi(x_k)+\int_\Omega \left(\int_0^{T}e^{-zt}h'(t)u(\cdot,t)\d t\right)\phi\d x.
\end{align*}
By combining this identity with \eqref{p2c}-\eqref{p2d}, we deduce that $\widehat{u_\star}(\cdot,z)\in L^2(\Omega)$ solves problem \eqref{eq3} in the transposition sense and the uniqueness of solutions for the problem implies  $\widehat{u_\star}(\cdot,z)=w_z$.
\end{proof}

The following  estimate will be crucial in the analysis below.
\begin{prop}\label{p3} Let $\tau\in\R$, $s>D$ and consider $z=s+i\tau$. For all $v_z\in H^2(\Omega)$ satisfying
$$-\Delta v_z-A\cdot\nabla v_z+\mu v_z +zv_z=0\quad \mbox{in }\Omega,$$
there exists $C>0$ depending on $\Omega$, $T$, $A$ and $\mu$ such that
\begin{align}\label{p3a}
&\abs{\sum_{k=1}^N\widehat{\lambda}_k(s+i\tau)v_z(x_k)+\int_\Omega u_0v_z\d x}\\
\leq &C\norm{\partial_\nu v_z+(A\cdot\nu)v_z}_{L^2(\partial\Omega)}(\norm{u}_{H^1(S\times(T_1,T))}+\norm{u}_{L^2(\partial\Omega\times(0,T))}).\nonumber
\end{align}
\end{prop}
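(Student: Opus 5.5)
We will pair the elliptic problem \eqref{eq3} with the test function $v_z$ and then express everything through boundary traces of the extension $u_\star$. Since $v_z\in H^2(\Omega)$ solves the adjoint equation $-\Delta v_z-A\cdot\nabla v_z+\mu v_z+zv_z=0$, the formal adjoint of $-\Delta+A\cdot\nabla+\mu+z$ is $-\Delta-\nabla\cdot(A\,\cdot)+\mu+z=-\Delta-A\cdot\nabla+\mu+z$ because $A$ is constant. By Proposition \ref{p2}, $w_z=\widehat u_\star(\cdot,z)$. We would apply Green's formula to $w_z$ and $v_z$ away from the points $x_k$ (where $w_z$ is $H^2$ by the local regularity in Theorem \ref{t4}, transferred to the Laplace transform), or more cleanly extend the transposition identity to test functions without the boundary condition. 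The resulting relation is
\begin{equation*}
\sum_{k=1}^N\widehat\lambda_k(z)v_z(x_k)+\int_\Omega u_0v_z\,\d x=\int_{\partial\Omega} w_z\,\big(\partial_\nu v_z+(A\cdot\nu)v_z\big)\,\d\sigma(x),
\end{equation*}
using $\partial_\nu w_z=0$. The term $\int_{\partial\Omega}v_z\partial_\nu w_z$ disappears, and the $(A\cdot\nu)w_zv_z$ term arises from integrating $A\cdot\nabla w_z\, v_z$ by parts.

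To justify this identity rigorously, we would split $w_z=f_z^1+f_z^2+g_z$ as in the proof of Proposition \ref{p2}. The pieces $f_z^2$ and $g_z$ lie in $H^2(\Omega)$, so Green's formula applies to them directly. For $f_z^1$, we would use the time-dependent transposition identity with the test function $h(t)e^{-zt}\phi(x)$. In the interior of $\Omega_r\times(0,T)$, $u$ is regular by Theorem \ref{t4}, so the boundary term $\int_0^T\!\int_{\partial\Omega}u\,h e^{-zt}(\partial_\nu\phi+(A\cdot\nu)\phi)$ is well defined. This step is the main obstacle: the transposition definition only allows $\phi$ with $\partial_\nu\phi+(A\cdot\nu)\phi=0$. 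We would therefore localize with a cutoff $\chi$ equal to $1$ near $\partial\Omega$ and vanishing near the $x_k$. We then integrate by parts in $\Omega_r$ for $\chi v_z$ using $u\in L^2(H^2(\Omega_r))\cap H^1(L^2(\Omega_r))$, and handle $(1-\chi)v_z$ through the transposition identity, whose boundary condition holds trivially. The commutator terms cancel between the two parts.

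Having the identity, we bound the right-hand side by Cauchy--Schwarz:
\begin{equation*}
\Big|\int_{\partial\Omega}w_z(\partial_\nu v_z+(A\cdot\nu)v_z)\Big|\le\|w_z\|_{L^2(\partial\Omega)}\|\partial_\nu v_z+(A\cdot\nu)v_z\|_{L^2(\partial\Omega)}.
\end{equation*}
The trace of $w_z$ equals $\int_0^Te^{-zt}u|_{\partial\Omega}\,\d t+\int_T^\infty e^{-zt}v_T|_{\partial\Omega}\,\d t$. The first integral is bounded by $C\|u\|_{L^2(\partial\Omega\times(0,T))}$, since $|e^{-zt}|\le 1$ for $s>0$. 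For the second, we use $|e^{-zt}|=e^{-st}\le e^{-Dt}$ and the trace theorem together with \eqref{est1}, giving $C\|e^{-Dt}v_T\|_{L^2(T,\infty;H^2(\Omega))}\le C\|u(\cdot,T)\|_{H^2(\Omega)}$. This is then bounded by $C\|u\|_{H^1(S\times(T_1,T))}$ via \eqref{t4a} in Theorem \ref{t4}. The constants are independent of $\tau$ and of $s>D$, which yields \eqref{p3a}.
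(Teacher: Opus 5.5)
Your argument is essentially the paper's. You obtain the identity
\[
\sum_k\widehat\lambda_k(z)v_z(x_k)+\int_\Omega u_0v_z\,\d x=\int_{\partial\Omega}w_z\big(\partial_\nu v_z+(A\cdot\nu)v_z\big)\,\d\sigma
\]
the same way: a cutoff $\chi$ equal to $1$ near $\partial\Omega$ and $0$ near the $x_k$, the transposition identity applied to $(1-\chi)v_z$, integration by parts on $\operatorname{supp}\chi$, and cancellation of the commutator terms. You then bound the boundary term the same way, by Cauchy--Schwarz and the trace bounds through $f_z$, $g_z$, \eqref{est1} and \eqref{t4a}. The only difference is where the cutoff argument is run. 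The paper works directly with $w_z$ in the Laplace variable, using $w_z|_{\Omega_r}\in H^2(\Omega_r)$ and the transposition formulation of \eqref{eq3} from Proposition \ref{p2}. That is shorter than your detour through $f_z^1,f_z^2,g_z$ and the time-dependent identity, but your version also works.

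One step needs fixing. From $|e^{-zt}|\le e^{-Dt}$ you only get $\norm{g_z}_{L^2(\partial\Omega)}\le C\int_T^{\infty}\norm{e^{-Dt}v_T(\cdot,t)}_{H^2(\Omega)}\d t$. This is an $L^1(T,\infty)$ norm, and \eqref{est1} does not control it on the infinite interval. You must keep the factor $e^{-(s-D)t}$ and apply Cauchy--Schwarz in time, as the paper does:
\[
\norm{g_z}_{L^2(\partial\Omega)}\le C\norm{e^{-(s-D)t}}_{L^2(T,\infty)}\norm{e^{-Dt}v_T}_{L^2(T,\infty;H^2(\Omega))}=C\,\big(2(s-D)\big)^{-1/2}e^{-(s-D)T}\norm{e^{-Dt}v_T}_{L^2(T,\infty;H^2(\Omega))}.
\]
So the constant is uniform in $\tau$ but depends on $s-D$, and it degenerates as $s\downarrow D$. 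Your claim that it is independent of $s>D$ is therefore not supported by this argument. This is harmless for the paper's purposes, since the proposition is only used with $s=D+1$.
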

\begin{proof}
Let ${\rm I}=\sum_{k=1}^N\widehat{\lambda}_k(s+i\tau)v_z(x_k)+\int_\Omega u_0v_z\d x$. By Theorem \ref{t4} and Proposition \ref{p2}, for all $r\in(0,r_0)$, the solution $w_z$ of problem \eqref{eq3} restricted to $\Omega_r$ belongs to $H^2(\Omega_r)$, and we have
\begin{equation}\label{p3ab}-\Delta w_z(x)+A\cdot\nabla w_z(x)+\mu w_z(x) +zw_z(x)=u_0(x),\quad x\in\Omega_r.\end{equation}
Now fix $\chi\in C^\infty(\overline{\Omega};[0,1])$ such that $\chi=1$ on $\overline{\Omega_{r_0/2}}$ and $\chi=0$ on $\Omega\setminus \Omega_{r_0/4}$. Then we have
\begin{align*}
{\rm I}=&\bigg(\sum_{k=1}^N\widehat{\lambda}_k(s+i\tau)(1-\chi(x_k))v_z(x_k)+\int_\Omega u_0(1-\chi) v_z\d x\bigg)+\int_\Omega u_0\chi v_z\d x=:{\rm II} + {\rm III}.
\end{align*}
By the choice of $\chi$, $(1-\chi)v_z$ belongs to $H^2(\Omega)$ and satisfies $(\partial_\nu +(A\cdot \nu))(1-\chi)v_z=0$ on $\partial\Omega$. Thus, by the definition of the solution in the sense of transposition for $w_z$, we have
\begin{align*}
    {\rm II} &=\int_\Omega w_z(-\Delta ((1-\chi)v_z)-A\cdot\nabla((1-\chi) v_z)+(\mu  +z)(1-\chi)v_z)\d x.
\end{align*}
By the product rule, the governing equation for $v_z$ and the condition $\chi=0$ on $\Omega\setminus\Omega_{r_0/4}$, we have
\begin{align*}
 {\rm II}   & = \int_{\Omega} w_z(2\nabla\chi\cdot\nabla v_z +(A\cdot\nabla\chi+\Delta\chi)v_z)\d x
    =\int_{\Omega_{r_0/4}} w_z(2\nabla\chi\cdot\nabla v_z +(A\cdot\nabla\chi+\Delta\chi)v_z)\d x.
\end{align*}
Next by the choice of $\chi$ and the identity \eqref{p3ab}, we have
\begin{align*}
    {\rm III} = \int_{\Omega_{r_0/4}} (-\Delta w_z+A\cdot\nabla w_z+\mu w_z +zw_z)\chi v_z\d x
\end{align*}
Combining the last two identities, integrating by parts and using the boundary conditions of $w_z$ and $\chi$ lead to
\begin{align*}
    {\rm I} &= \int_{\Omega_{r_0/4}} w_z(2\nabla\chi\cdot\nabla v_z +(A\cdot\nabla\chi+\Delta\chi)v_z)\d x+\int_{\Omega_{r_0/4}} (-\Delta w_z+A\cdot\nabla w_z+\mu w_z +zw_z)\chi v_z\d x\\
&=\int_{\partial\Omega} w_z (\partial_\nu(\chi v_z)+(A\cdot\nu)\chi v_z)\d\sigma +\int_{\Omega}(w_z\chi) (-\Delta v_z-A\cdot\nabla v_z+\mu v_z +zv_z)\d x.
\end{align*}
By the condition $\chi=1$ on a neigborhood of $\partial\Omega$ and the governing equation for $v_z$, we get
\begin{align*}
   |{\rm I}| &=\abs{\int_{\partial\Omega} w_z(\partial_\nu v_z+(A\cdot\nu)v_z)\d\sigma(x)}
\leq \norm{\partial_\nu v_z+(A\cdot\nu)v_z}_{L^2(\partial\Omega)}\norm{w_z}_{L^2(\partial\Omega)}.
\end{align*}
By Proposition \ref{p2} and \eqref{p2b}, we deduce
$$\norm{w_z}_{L^2(\partial\Omega)}\leq \norm{f_z}_{L^2(\partial\Omega)}+\norm{g_z}_{L^2(\partial\Omega)}.$$
From the estimate \eqref{est1} and the Cauchy-Schwarz inequality, since $s>D$, we get
\begin{align*} \norm{g_z}_{L^2(\partial\Omega)}&\leq \int_T^{+\infty}e^{-(s-D)t}\norm{e^{-Dt}v_T(\cdot,t)}_{L^2(\partial\Omega)}\d t\\
&\leq C\norm{e^{-Dt}v_T}_{L^2([T,+\infty);H^2(\Omega))}\leq C\norm{u(\cdot,T)}_{H^2(\Omega)},
\end{align*}
with $C>0$ depending only on $\Omega$, $T$, $A$ and $\mu$. Then, the estimate \eqref{t4a} implies
$$\norm{g_z}_{L^2(\partial\Omega)} \leq C\norm{u}_{H^1(S\times(T_1,T))}.$$
In the same way, we have
$$\norm{f_z}_{L^2(\partial\Omega)}\leq \int_0^T\norm{u(\cdot,t)}_{L^2(\partial\Omega)}dt\leq \sqrt{T}\norm{u}_{L^2(0,T;L^2(\partial\Omega))}.$$
By combining these two estimates, we obtain \eqref{p3a}.\end{proof}

\section{Proofs of the main results}\label{sec:proof}
In this section, we present the proofs of the main results in Section \ref{sec:main}.
\subsection{Proof of Theorem \ref{t2}}
We divide the lengthy proof into three steps.

\noindent\textbf{Step 1}: In this step, we will show that there exists $r_1>0$ depending on $\Omega$, $s$, $A$, $\mu$, $T$, $r_\star$ and $M$ such that
\begin{equation}\label{t2a}
\sup_{\tau\in\R}|\widehat{\lambda^1}((D+1)+i\tau)|\geq r_1.
\end{equation}
To this end, we extend the map
$\lambda^1$ by zero to $\R$, still denoted by $\lambda^1$. In light of \cite[Theorem 11.4, Chapter 1]{LM1}, we deduce that $\lambda^1\in H^s(\R)$ and  there exists $C$ depending only on $T$ and $s$ such that $$\norm{\lambda^1}_{H^s(\R)}\leq C\norm{\lambda^1}_{H^s(0,T)}.$$
By combining this estimate with condition \eqref{t2b}, we derive
\bel{t2ac}\norm{e^{-(D+1)t}\lambda^1}_{H^s(\R)}\leq C\norm{\lambda^1}_{H^s(\R)}\leq CM,\ee
with $C>0$ depending on $A$, $\mu$, $\Omega$, $s$ and $T$.
By condition \eqref{t2ab} and Fourier-Plancherel formula, for all $R>0$, we have
\begin{align*}
&r_\star^2\leq\norm{\lambda^1}_{L^2(0,T)}^2
\leq e^{2(D+1)T}\norm{e^{-(D+1)t}\lambda^1}_{L^2(0,T)}^2\\
=& e^{2(D+1)T}\norm{e^{-(D+1)t}\lambda^1}_{L^2(\R)}^2\leq e^{2(D+1)T}(2\pi)^{-1}\norm{\widehat{e^{-(D+1)t}\lambda^1}(i\cdot)}_{L^2(\R)}^2\\
\leq &e^{2(D+1)T}(2\pi)^{-1}\left(2R\left(\sup_{\tau\in\R}\abs{\widehat{\lambda^1}((D+1)+ i\tau)}\right)^2+\int_{|\tau|>R}\abs{\widehat{e^{-(D+1)t}\lambda^1}(i\tau)}^2\d\tau\right).
 \end{align*}
Combining this inequality with the estimate \eqref{t2ac} gives
\begin{align*}
&\int_{|\tau|>R}\abs{\widehat{e^{-(D+1)t}\lambda^1}(i\tau)}^2\d\tau\leq \int_{|\tau|>R}\left(\frac{(1+|\tau|)^{2s}}{(1+R)^{2s}}\right)\abs{\widehat{e^{-(D+1)t}\lambda^1}(i\tau)}^2\d\tau\\
\leq& R^{-2s}\int_{\R}(1+|\tau|)^{2s}\abs{\widehat{e^{-(D+1)t}\lambda^1}(i\tau)}^2\d\tau= R^{-2s}\|{e^{-(D+1)t}\lambda^1}\|_{H^s(\R)}^2\leq R^{-2s}C^2M^2.
\end{align*}
Therefore, we have
$$   R^{-2s}C^2M^2  +2R\left(\sup_{\tau\in\R}\abs{\widehat{\lambda^1}((D+1)+ i\tau)}\right)^2    \geq e^{-2(D+1)T}(2\pi)r_\star^2,\quad \forall R>0.$$
Since $\lambda^1\not\equiv0$, by the injectivity of Laplace transform, we deduce
\begin{align*}
\underset{\tau\in\R}{\sup}\abs{\widehat{\lambda^1}((D+1)+ i\tau)}> 0.
\end{align*}
Then, by choosing $R=\left(\underset{\tau\in\R}{\sup}\abs{\widehat{\lambda^1}((D+1)+ i\tau)}\right)^{-\frac{2}{2s+1}}$, we get
$$\left(\sup_{\tau\in\R}\abs{\widehat{\lambda^1}((D+1)+ i\tau)}\right)^{\frac{4s}{2s+1}}(2+C^2M^2)   \geq e^{-2(D+1)T}2\pi r_\star^2.$$
 Therefore, by fixing
$$r_1=\left(\frac{ e^{-2(D+1)T}2\pi r_\star^2}{2+C^2M^2}\right)^{\frac{2s+1}{4s}},$$
we obtain the desired estimate \eqref{t2a}.\\

\noindent\textbf{Step 2}: In this step, we will prove the estimate \eqref{t2ab1}.
Let $u=u^1-u^2$. Then $u$ solves problem \eqref{eq1} with $u_0\equiv0$, $N=2$ and, for $j=1,2$,  $\lambda_j=(-1)^{j+1}\lambda^j$, $x_j=x^j$.
Let $\tau\in\R$ and fix $p=D+1+i\tau$. By Theorem \ref{t4} and Proposition \ref{p3}, for all $v_p\in H^2(\Omega)$ satisfying
$$-\Delta v_p-A\cdot\nabla v_p+\mu v_p +pv_p=0\quad \mbox{in } \Omega,$$
we derive
\begin{align}\label{t2c}
&\abs{\widehat{\lambda^1}((D+1)+i\tau)v_p(x^1)-\widehat{\lambda^2}((D+1)+i\tau)v_p(x^2)}\\
\leq &C\norm{(\partial_\nu v_p+(A\cdot\nu)v_p)}_{L^2(\partial\Omega)}(\norm{u}_{H^1(S\times(T_1,T))}+\norm{u}_{L^2(0,T;L^2(\partial\Omega))}).\nonumber
\end{align}
Let $\Sigma_3$ be the set of permutations of the set $\{1,2,3\}$ and, for all $x=(x_1,x_2,x_3)\in\R^3$ and $\sigma\in\Sigma_3$, let $z_\sigma=x_{\sigma(1)}+ix_{\sigma(2)}$  and $z_\sigma^j=x_{\sigma(1)}^j+ix_{\sigma(2)}^j$, $j=1,2$, with $x^j=(x_1^j,x_2^j,x_3^j)$. One can check that, for any $\sigma\in\Sigma_3$, the function
$$w_\sigma(x)=e^{-\frac{A\cdot (x-x^1)}{2}}(z_\sigma-z_\sigma^2)\exp\left(\left(\mu +\frac{|A|^2}{4}+p\right)^{\frac 1 2}(x_{\sigma(3)}-x_{\sigma(3)}^1)\right),\quad x\in\Omega,$$
satisfies
\begin{align*}
-\Delta w_\sigma-A\cdot\nabla w_\sigma+\mu w_\sigma +pw_\sigma=0\quad \mbox{in }\Omega \quad\mbox{and}\quad w_\sigma(x^1)=z_\sigma^1-z_\sigma^2,\quad w_\sigma(x^2)=0.
\end{align*}
Therefore, by the estimate \eqref{t2c} with $v_p=w_\sigma$, we obtain
\begin{align}\label{t2d}
&\abs{\widehat{\lambda^1}((D+1)+i\tau)}|z_\sigma^1-z_\sigma^2|\\
\leq &C\norm{\partial_\nu w_\sigma+(A\cdot\nu)w_\sigma}_{L^2(\partial\Omega)}(\norm{u}_{H^1(S\times(T_1,T))}+\norm{u}_{L^2(0,T;L^2(\partial\Omega))}),\quad \sigma\in\Sigma_3.\nonumber
\end{align}
In view of the estimate \eqref{t2a}, we can find $\tau_0\in\R$ such that $|\widehat{\lambda^1}((D+1)+i\tau_0)|\geq \frac{r_1}{2}$. Thus, by fixing $\tau=\tau_0$ in \eqref{t2d}, we get
$$\begin{aligned}|z_\sigma^2-z_\sigma^1|&\leq 2r_1^{-1}C\norm{\partial_\nu w_\sigma+(A\cdot\nu)w_\sigma}_{L^2(\partial\Omega)}(\norm{u}_{H^1(S\times(T_1,T))}+\norm{u}_{L^2(\partial\Omega\times(0,T))})\\
&\leq C (\norm{u}_{H^1(S\times(T_1,T))}+\norm{u}_{L^2(0,T;L^2(\partial\Omega))}),\quad \sigma\in\Sigma_3,\end{aligned}$$
with the constant $C>0$ depending on $r_\star$, $s$, $\Omega$, $T_1$,  $S$ $A$, $\mu$, $M$ and $T$. This clearly implies the estimate \eqref{t2ab1}.\\

\noindent \textbf{Step 3}: We will complete the proof of the theorem by proving the estimate \eqref{t2ab2}. Fix $R>1$, $\tau\in[-R,R]$, $p=D+1+ i\tau$ and $\omega\in\mathbb S^{2}$ such that $\omega\cdot (x^1-x^2)=0$. Therefore, by choosing
$$v_{p}(x)=e^{-\frac{A\cdot (x-x^2)}{2}}\exp\left(\left(\mu +\frac{|A|^2}{4}+p\right)^{\frac 1 2}\omega\cdot (x-x^2)\right),\quad x\in\Omega$$
in \eqref{t2c}, we deduce
\begin{align}\label{t2e}
&\abs{\widehat{\lambda^1}((D+1)+ i\tau)e^{-\frac{A\cdot (x^1-x^2)}{2}}-\widehat{\lambda^2}((D+1)+ i\tau)}\\
\leq& C\norm{\partial_\nu v_{p}+(A\cdot\nu)v_p}_{L^2(\partial\Omega)}(\norm{u}_{H^1(S\times(T_1,T))}+\norm{u}_{L^2(0,T;L^2(\partial\Omega))}).\nonumber
\end{align}
Moreover, by the triangle inequality, we have
\begin{align*}
&\abs{\widehat{\lambda^1}((D+1)+ i\tau)e^{-\frac{A\cdot (x^1-x^2)}{2}}-\widehat{\lambda^2}((D+1)+ i\tau)}\\
\geq& \abs{\widehat{\lambda^1}((D+1)+ i\tau)-\widehat{\lambda^2}((D+1)+ i\tau)}-\abs{\widehat{\lambda^1}((D+1)+ i\tau)\left(e^{-\frac{A\cdot (x^1-x^2)}{2}}-1\right)}\\
\geq& \abs{\widehat{\lambda^1}((D+1)+ i\tau)-\widehat{\lambda^2}((D+1)+ i\tau)}-|A||x^2-x^1|\norm{\lambda^1}_{L^1(0,T)},
\end{align*}
and from the estimate \eqref{t2ab1}, we obtain
$$\begin{aligned}&\abs{\widehat{\lambda^1}((D+1)+ i\tau)e^{-\frac{A\cdot (x^1-x^2)}{2}}-\widehat{\lambda^2}((D+1)+ i\tau)}\\
\geq &\abs{\widehat{\lambda^1}((D+1)+ i\tau)-\widehat{\lambda^2}((D+1)+ i\tau)}-C(\norm{u}_{H^1(S\times(T_1,T))}+\norm{u}_{L^2(0,T;L^2(\partial\Omega))}),\end{aligned}$$
with the constant $C>0$ depending only on $r_\star$, $s$, $\Omega$, $A$, $T_1$, $S$,  $\mu$, $M$ and $T$. By combining this estimate with \eqref{t2e}, for all $\tau\in[-R,R]$, we get
\begin{equation}\label{t2f}
\abs{\widehat{\lambda^1}((D+1)+ i\tau)-\widehat{\lambda^2}((D+1)+ i\tau)}\leq C(1+\norm{ v_{p}}_{H^2(\Omega)})(\norm{u}_{H^1(S\times(T_1,T))}+\norm{u}_{L^2(0,T;L^2(\partial\Omega))}),
\end{equation}
with $C>0$ depending only on $r_\star$, $s$, $\Omega$, $M$, $T_1$, $S$, $A$, $\mu$ and $T$. Meanwhile, we have
$$\norm{ v_{p}}_{H^2(\Omega)}\leq C\left(|\mu| +\frac{|A|^2}{4}+D+1+R\right) \exp\left(c\left(|\mu| +\frac{|A|^2}{4}+D+1+R\right)^{\frac 1 2}\right),\quad x\in\Omega,$$
where $C>0$ depends only on  $\Omega$, $A$ and $\mu$, and $c>0$ depends only on $\Omega$. Therefore,  from the estimate \eqref{t2f}, for $R>|\mu| +\frac{|A|^2}{4}+D+1$, we find
\begin{equation}\label{t2g}
\sup_{\tau\in[-R,R]}\abs{\widehat{\lambda^1}((D+1)+ i\tau)-\widehat{\lambda^2}((D+1)+ i\tau)}\leq CRe^{cR^{\frac 1 2}}(\norm{u}_{H^1(S\times(T_1,T))}+\norm{u}_{L^2(L^2(0,T;L^2(\partial\Omega))}),
\end{equation}
where $C,c>0$ depend only on  $\Omega$, $A$, $T_1$, $\mu$, $r_\star$, $M$, $S$ and $T$.
In addition, by extending $\lambda=\lambda^1_1-\lambda^2_1$ by zero to $\R$ and applying the Fourier-Plancherel formula, we get
\begin{align*}
&\norm{\lambda^1-\lambda^2}_{L^2(0,T)}^2\leq e^{2(D+1)T}\norm{e^{-(D+1)t}\lambda^1-e^{-(D+1)t}\lambda^2}_{L^2(0,T)}^2\\
\leq &e^{2(D+1)T}\norm{e^{-(D+1)t}\lambda}_{L^2(\R)}^2\leq e^{2(D+1)T}(2\pi)^{-1}\norm{\widehat{e^{-(D+1)t}\lambda}(i\cdot)}_{L^2(\R)}^2\\
\leq & C\left(2R\left(\sup_{\tau\in[-R,R]}\abs{\widehat{\lambda^1}((D+1)+ i\tau)-\widehat{\lambda^2}((D+1)+ i\tau)}\right)^2+\int_{|\tau|>R}\abs{\widehat{e^{-(D+1)t}\lambda}(i\tau)}^2\d\tau\right).
 \end{align*}
Combining this estimate with \eqref{t2g} and fixing $\gamma=(\norm{u}_{H^1(S\times(T_1,T))}+\norm{u}_{L^2(\partial\Omega\times(0,T))})$ give
$$\begin{aligned}&\norm{\lambda^1-\lambda^2}_{L^2(0,T)}^2\leq C\left(R^3e^{2cR^{\frac 1 2}}\gamma^2+\int_{|\tau|>R}\abs{\widehat{e^{-(D+1)t}\lambda}(i\tau)}^2\d\tau\right).
 \end{aligned}$$
Meanwhile, by condition \eqref{t2b}, we obtain
\begin{align*}
&\int_{|\tau|>R}\abs{\widehat{e^{-(D+1)t}\lambda}(i\tau)}^2\d\tau\leq\int_{|\tau|>R}\left(\frac{1+|\tau|^2}{1+R^2}\right)\abs{\widehat{e^{-(D+1)t}\lambda}(i\tau)}^2\d\tau\\
\leq& R^{-2}\int_{\R}(1+|\tau|^2)\abs{\widehat{e^{-(D+1)t}\lambda}(i\tau)}^2\d\tau\leq R^{-2}\norm{e^{-(D+1)t}\lambda}_{H^1(\R)}^2
\leq CR^{-2}\norm{\lambda}_{H^1(\R)}^2\leq CM^2R^{-2},
\end{align*}
which implies
$$\norm{\lambda^1-\lambda^2}_{L^2(0,T)}^2\leq C(e^{3cR^{\frac 1 2}}\gamma^2+R^{-2}),$$
where $C>0$ depends on $r_\star$, $s$, $\Omega$, $A$, $T_1$, $S$, $\mu$, $M$ and $T$. From this last estimate and the classical optimization argument, we obtain the desired estimate \eqref{t2ab2}.

\subsection{Proof of Theorems \ref{t3} and \ref{t100}}

The proof of  Theorem \ref{t3} relies on  the following lemma.

\begin{lem}\label{l1} Let $d=1,2,3$ and let $u$ be the solution in the transposition sense of problem \eqref{eq1}. Then, for all $v\in H^1(0,T;L^2(\Omega))\cap L^2(0,T;H^2(\Omega))$, we have
\begin{align}\label{l1a}
&\int_0^T\!\!\int_\Omega (-\partial_tv-\Delta v-A\cdot\nabla v+\mu v)u\d x\d t\\
=&-\int_0^T\!\!\int_{\partial\Omega} (\partial_\nu v +(A\cdot\nu)v)ud\sigma(x)\d t-\int_\Omega v(x,T)u(x,T)\d x+\sum_{j=1}^N\int_0^T\lambda_j(t)v(x_j,t)\d t.\nonumber
\end{align}
\end{lem}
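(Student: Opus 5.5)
Note first that the statement implicitly uses $u_0$: the term $\int_\Omega v(x,0)u_0\,\d x$ should appear on the right-hand side unless $u_0\equiv0$. The paper applies this lemma to differences $u=u^1-u^2$, for which $u_0\equiv0$. I will therefore prove \eqref{l1a} in that case, noting that for general $u_0$ one adds $\int_\Omega u_0v(\cdot,0)\,\d x$ to the right.

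The plan is to reduce \eqref{l1a} to the transposition identity of Definition \ref{def:transposition} by splitting $v$ in time and in space, using the improved regularity of Theorem \ref{t4} wherever $v$ does not satisfy the adjoint boundary conditions.

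\textbf{Time splitting.} Fix a cutoff $h\in C^\infty([0,T];[0,1])$ with $h=1$ on $[0,T_1+\delta]$ and $h=0$ on $[T-\delta,T]$. Write $v=hv+(1-h)v$. For $(1-h)v$, which is supported in $t\ge T_1$, use $u|_{\Omega\times(T_1,T)}\in H^1(T_1,T;H^2(\Omega))$, equation \eqref{t4aa} and $\partial_\nu u=0$. Integrating by parts classically in $x$ and $t$ then gives
\[
\int\!\!\int(-\partial_t-\Delta-A\cdot\nabla+\mu)\big((1-h)v\big)\,u
=-\int\!\!\int_{\partial\Omega}(\partial_\nu+A\cdot\nu)\big((1-h)v\big)\,u
-\int_\Omega v(T)u(T).
\]
The point-source term contributes nothing here, because $\lambda_j(1-h)\equiv0$ by \eqref{lambda}.

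\textbf{Space splitting of $hv$.} Choose $\chi$ as in the proof of Proposition \ref{p3}: $\chi=1$ near $\partial\Omega$ (on $\Omega_{r_0/2}$) and $\chi=0$ near the points $x_k$. Write $hv=(1-\chi)hv+\chi hv$.

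The function $(1-\chi)hv$ lies in $H^1(0,T;L^2)\cap L^2(0,T;H^2)$, vanishes at $t=T$, and vanishes near $\partial\Omega$, so it satisfies the adjoint boundary condition. The membership in $H^1_0$ required in Definition \ref{def:transposition} looks like a typo, and I would ignore it, since the definition is used elsewhere with Neumann-type test functions. Definition \ref{def:transposition} therefore gives exactly $\sum_j\int_0^T\lambda_j h\,v(x_j,t)\,\d t$, and since $h=1$ on the support of $\lambda_j$, this equals $\sum_j\int\lambda_j v(x_j,t)\,\d t$.

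The function $\chi hv$ is supported in $\overline{\Omega_{r_0/4}}\times[0,T-\delta]$. There $u\in L^2(H^2)\cap H^1(L^2)$ by Step 1 of Theorem \ref{t4}, solves the homogeneous equation, and has $\partial_\nu u=0$. Integrating by parts gives the boundary term $-\int\!\!\int_{\partial\Omega}(\partial_\nu+A\cdot\nu)(\chi hv)\,u$. The time boundary terms vanish: at $t=T-\delta$ because $h=0$, and at $t=0$ because $u(0)=u_0=0$ on $\Omega_r$. There are no contributions from the inner spheres, since $\chi$ vanishes near them.

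\textbf{Summing.} On $\partial\Omega$ we have $\chi=1$ and $\nabla\chi=0$, so the two boundary terms add up to $-\int\!\!\int_{\partial\Omega}(\partial_\nu v+(A\cdot\nu)v)\,u$. This yields \eqref{l1a}.

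The main obstacle is justifying the integration by parts on $\Omega_{r}\times(0,T)$ near $t=0$. The claim $u(\cdot,0)=0$ there needs $u\in C([0,T];L^2(\Omega_r))$, which follows from the $H^1(0,T;L^2(\Omega_r))$ regularity. The initial trace $u(0)=u_0$ on $\Omega_r$ is read off from the explicit representation $u=\sum_j y_j+v$ in Step 1 of Theorem \ref{t4}, in which each $y_j$ vanishes at $t=0$ away from $x_j$. Beyond that, the density arguments are routine.
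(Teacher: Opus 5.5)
Your proof is correct and is essentially the paper's. The paper splits $v$ into $h\phi v$, $(1-h)\phi v$, $h(1-\phi)v$, $(1-h)(1-\phi)v$ with $\phi\in C^\infty_0(\Omega)$ equal to $1$ near the sources; your pieces $(1-\chi)hv$, $\chi hv$, $(1-h)v$ are the same with the last two merged, and you treat them with the same tools: the transposition identity near the sources, and integration by parts using Theorem \ref{t4} elsewhere. Your remarks on the missing $\int_\Omega u_0v(\cdot,0)\,\d x$ term and on the vanishing $t=0$ term via $u(\cdot,0)=0$ on $\Omega_r$ make explicit what the paper uses silently (it only applies the lemma with $u_0\equiv0$); note also that $(1-\chi)hv$ vanishes near $\partial\Omega$, so it satisfies the $H^1_0$ requirement of Definition \ref{def:transposition} and you need not discard that requirement.
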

\begin{proof} Let $U\subset \mathbb{R}^d$ be an open set such that $\{x_1,\ldots,x_N\}\subset U$, $\overline{U}\subset \Omega$ and fix $\phi\in C^\infty_0(\Omega;[0,1])$ such that $\phi=1$ in a neighborhood of $\overline{U}$. Consider also  $h\in C^\infty([0,T])$ satisfying $h=1$ on $[0,T_1+\delta]$ and $h=0$ on $[T-\delta,T]$ with $\delta\in(0,(T-T_1)/4)$. We split the function $v\in H^1(0,T;L^2(\Omega))\cap L^2(0,T;H^2(\Omega))$ into four terms as
$$v(x,t)=\underbrace{h(t)\phi(x)v(x,t)}_{:=v_1(x,t)}+\underbrace{(1-h(t))\phi(x)v(x,t)}_{:=v_2(x,t)}+\underbrace{h(t)(1-\phi(x))v(x,t)}_{:=v_3(x,t)}+\underbrace{(1-h(t))(1-\phi(x))v(x,t)}_{:=v_4(x,t)}.$$
Then, we find
\begin{equation}\label{l1b}
\int_0^T\!\!\int_\Omega (-\partial_tv-\Delta v-A\cdot\nabla v+\mu v)u\d x\d t=\sum_{j=1}^4\int_0^T\!\!\int_\Omega (-\partial_tv_j-\Delta v_j-A\cdot\nabla v_j+\mu v_j)u\d x\d t.
\end{equation}
Since $v_1\in L^2(0,T;H^2(\Omega)\cap H^1_0(\Omega))$, $v_1(\cdot,T)\equiv0$ and $\partial_\nu v_1 + A\cdot \nu v_1=0$ on $\partial\Omega\times(0,T)$, by the definition of the solution in the transposition sense, we obtain
\bel{l1c} \int_0^T\!\!\int_\Omega (-\partial_tv_1-\Delta v_1-A\cdot\nabla v_1+\mu v_1)u\d x\d t=\sum_{j=1}^N\int_0^T\lambda_j(t)v_1(x_j,t)\d t=\sum_{j=1}^N\int_0^T\lambda_j(t)v(x_j,t)\d t.\ee
Moreover, in view of Theorem \ref{t2}, we have $u|_{(\Omega\setminus \overline{U})\times (0,T)}\in L^2\left(0,T; H^2(\Omega\setminus \overline{U})\right)\cap H^1(0,T;L^2(\Omega\setminus \overline{U}))$, $u||_{\Omega\times (T_1,T)}\in H^1(T_1,T;H^2(\Omega))$ and \eqref{t4aa} is fulfilled. Therefore, by integration by parts, we get
\begin{align*}
&\int_0^T\!\!\int_\Omega (-\partial_tv_2-\Delta v_2-A\cdot\nabla v_2+\mu v_2)u\d x\d t
=\int_\Omega\!\int_{T_1}^T (-\partial_tv_2-\Delta v_2-A\cdot\nabla v_2+\mu v_2)u\d x\d t\\
=&-\int_0^T\!\!\int_{\partial\Omega}u(\partial_\nu v_2+(A\cdot\nu)v_2)\d\sigma(x)\d t-\int_\Omega v_2(x,T)u(x,T)\d x
=-\int_\Omega \chi(x) v(x,T)u(x,T)\d x,\\
&\sum_{j=3}^4\int_0^T\!\!\int_\Omega (-\partial_tv_j-\Delta v_j-A\cdot\nabla v_j+\mu v_j)u\d x\d t
=\sum_{j=3}^4\int_{\Omega\setminus \overline{U}}\!\!\int_{0}^T(-\partial_tv_j-\Delta v_j-A\cdot\nabla v_j+\mu v_j)u\d x\d t\\
=&\sum_{j=3}^4-\int_0^T\!\!\int_{\partial\Omega }u(\partial_\nu v_j+(A\cdot\nu)v_j)\d\sigma(x)\d t-\int_\Omega v_j(x,T)u(x,T)\d x\\
=&-\int_0^T\!\!\int_{\partial\Omega}u(\partial_\nu v+(A\cdot\nu)v)\d\sigma(x)\d t-\int_\Omega (1-\chi(x)) v(x,T)u(x,T)\d x.
\end{align*}
Combining these two identities with the identities \eqref{l1b}-\eqref{l1c}, we get the desired assertion \eqref{l1a}.\end{proof}

We can now state the proof of Theorems \ref{t3} and \ref{t100}.

\noindent\textbf{Proof of Theorem \ref{t3}} Let $u=u^1-u^2$. Then the function $u$ solves problem \eqref{eq1} with $u_0\equiv0$, $N$ replaced by $2N$ and, for $j=1,2$,  $\lambda_{k+(j-1)N}=(-1)^{j+1}\lambda_k^j$, $x_{k+(j-1)N}=x_k^j$.  For every $x=(x_1,x_2)\in\R^2$, we associate the complex number $z=x_1+ix_2$ and, for $k=1,\ldots,N$ and $j=1,2$, we fix $z_k^j=(x_k^j)_1+i(x_k^j)_2$ with $x_k^j=((x_k^j)_1, (x_k^j)_2)$. Fix $\ell\in\{1,\ldots,N\}$ and, without loss of generality, assume that $x_\ell^1\neq x_\ell^2$. Let
$$H_\ell(x)=e^{-\frac{A\cdot (x-x^1_\ell)}{2}} P_\ell(x_1+ix_2),\quad \ell=1,\ldots,N,$$
where, for $\ell=1,\ldots,N$, $P_\ell$ is the following complex  polynomial
$$P_\ell (z)=\prod_{k\neq \ell}(z-z_k^1)\prod_{m=1}^N(z-z_k^2),\quad z\in\mathbb C.$$
Fix $\ell=1,\ldots,N$ and, for all $(x,t)\in \Omega\times(0,T]$, direct computation gives
$$\begin{aligned}(-\partial_tH_\ell-\Delta H_\ell -A\cdot\nabla H_\ell+\mu H_\ell)(x,t)&=(-\Delta H_\ell -A\cdot\nabla H_\ell+\mu H_\ell)(x)\\
&=-e^{-\frac{A\cdot x}{2}}\Delta P_\ell(x_1+ix_2)+\Big(\mu+\frac{|A|^2}{4}\Big) H_\ell(x).\end{aligned}$$
Since $P_\ell$ is holomorphic and $\mu+\frac{|A|^2}{4}=0$, we get
$$(-\partial_tH_\ell-\Delta H_\ell -A\cdot\nabla+\mu H_\ell)(x,t)=0,\quad (x,t)\in \Omega\times(0,T].$$
Therefore, by the identity \eqref{l1a} with $v(x,t)=H_\ell (x)$, $(x,t)\in \Omega\times(0,T]$, and $u=u_1-u_2$, we get
$$-\int_0^T\!\!\int_{\partial\Omega} u(\partial_\nu H_\ell +(A\cdot\nu)H_\ell) \d\sigma(x)\d t-\int_\Omega u(x,T)H_\ell(x)\d x+\sum_{j=1}^2(-1)^{j+1}\sum_{k=1}^N\left(\int_0^T\lambda_k^j(t)\d t\right)H_\ell(x_k^j)=0.$$
In addition,  $H_\ell(x_k^j)=0$ for $(k,j)\neq (\ell,1)$, which implies
$$\left(\int_0^T\lambda_\ell^1(t)\d t\right)H_\ell(x_\ell^1)=\int_0^T\!\int_{\partial\Omega} u(\partial_\nu H_\ell (x)+(A\cdot\nu)H_\ell(x))\d\sigma(x)\d t+\int_\Omega u(x,T)H_\ell(x)\d x.$$
Combining this with Theorem \ref{t4} and condition \eqref{t3a} yields
\bel{t3g}|H_\ell(x_\ell^1)|\leq C(\norm{u}_{H^1(S\times(T_1,T))}+\norm{u}_{L^2(0,T;L^2(\partial\Omega))}),\ee
with  $C>0$  depending  on $r_2$,  $\Omega$, $A$, $\mu$, $T_1$, $S$ and $T$. Moreover, by conditions \eqref{t3c}-\eqref{t3d}, we get
$$|H_\ell(x_\ell^1)|=\prod_{k\neq \ell}|x_\ell^1-x_k^1|\prod_{m=1}^N|x_\ell^1-x_k^2|\geq |x^1_\ell-x_{\sigma(\ell)}^2|^N\delta_0^{N-1}.$$
Combining this with the estimate \eqref{t3g} gives
$$|x^1_\ell-x_{\sigma(\ell)}^2|\leq C(\norm{u}_{H^1(S\times(T_1,T))}+\norm{u}_{L^2(0,T;L^2(\partial\Omega))})^{\frac{1}{N}},$$
with $C>0$  depending  on $r_2$, $\delta_0$, $\Omega$, $A$, $\mu$, $T_1$, $S$ and $T$. This last estimate clearly implies the H\"{o}lder stability estimate \eqref{t3e}.
This completes the proof of Theorem \ref{t3}.\qed
\ \\

\noindent\textbf{Proof of Theorem \ref{t100}} In a similar way to Theorem \ref{t2}, we can prove that the estimates \eqref{t2a} and \eqref{t2c} hold. For $j=1,2$, we set $x^j=(x^j_1,x^j_2)$, and we fix $\tau\in\R$ and $p=D+1+i\tau$. For $k=1,2$, let
$$w_k(x)=e^{-\frac{A\cdot (x-x^1)}{2}}(x_k-x_k^2)\exp\left(\left(\mu +\frac{|A|^2}{4}+p\right)^{\frac 1 2}(x_{3-k}-x_{3-k}^1)\right),\quad x=(x_1,x_2)\in\Omega$$
and note that
\begin{align*}
-\Delta w_k-A\cdot\nabla w_k+\mu w_k +pw_k=0\quad \mbox{in }\Omega \quad\mbox{and}\quad w_k(x^1)=x_k^1-x_k^2,\quad w_k(x^2)=0.
\end{align*}
Then by \eqref{t2c} with $v_p=w_k$, $k=1,2$, we get
\begin{align}
\abs{\widehat{\lambda^1}((D+1)+i\tau)}|x_k^1-x_k^2|\leq C\norm{\partial_\nu w_k+(A\cdot\nu)w_k}_{L^2(\partial\Omega)}(\norm{u}_{H^1(S\times(T_1,T))}+\norm{u}_{L^2(0,T;L^2(\partial\Omega))}).\nonumber
\end{align}
In view of condition \eqref{t2a}, we can find $\tau_0\in\R$ such that $|\widehat{\lambda^1}((D+1)+i\tau_0)|\geq \frac{r_1}{2}$. Thus, by fixing $\tau=\tau_0$, we get
$$\begin{aligned}|x_k^1-x_k^2|&\leq 2r_1^{-1}C\norm{\partial_\nu w_k+(A\cdot\nu)w_k}_{L^2(\partial\Omega)}(\norm{u}_{H^1(S\times(T_1,T))}+\norm{u}_{L^2(0,T;L^2(\partial\Omega))})\\
&\leq C (\norm{u}_{H^1(S\times(T_1,T))}+\norm{u}_{L^2(0,T;L^2(\partial\Omega))}),\quad k=1,2,\end{aligned}$$
with the constant $C>0$ depending on $r_\star$, $s$, $\Omega$, $T_1$,  $S$ $A$, $\mu$, $M$ and $T$. This clearly implies  \eqref{t2ab1}.

Now consider the estimate \eqref{t2ab2}. To this end, fix $R>1$, $\tau\in[-R,R]$, $p=D+1+ i\tau$, $\omega\in\mathbb S^{1}$ such that $\omega\cdot (x^1-x^2)=0$. By choosing
$$v_{p}(x)=e^{-\frac{A\cdot (x-x^2)}{2}}\exp\left(\left(\mu +\frac{|A|^2}{4}+p\right)^{\frac 1 2}\omega\cdot (x-x^2)\right),\quad x\in\Omega$$
in \eqref{t2d}, we get
\begin{align*}
&\abs{\widehat{\lambda^1}((D+1)+i\tau)v_p(x^1)-\widehat{\lambda^2}((D+1)+i\tau)v_p(x^2)}\\
\leq &C\norm{\partial_\nu v_p+(A\cdot\nu)v_p}_{L^2(\partial\Omega)}(\norm{u}_{H^1(S\times(T_1,T))}+\norm{u}_{L^2(0,T;L^2(\partial\Omega))}).
\end{align*}
Then, repeating the argument in the last step of Theorem \ref{t2} gives the assertion \eqref{t2ab2}.\qed
\subsection{Proof of Theorem \ref{t5}}
Lt $u=u^1-u^2$. Then $u$ solves problem \eqref{eq1} with $u_0\equiv0$, for $j=1,2$,  $\lambda_{j}=(-1)^{j+1}\lambda_1^j$, $x_{j}=x_1^j$.  We define the map
$$g(x)=e^{-\frac{A (x-x_1^1)}{2}} (x-x_1^2),\quad x\in (0,\ell).$$
One can easily check that
$$(-\partial_tg-\partial_x^2g -A\cdot \partial_xg+\mu g)(x,t)=-g''(x)-A\cdot g'(x)+\mu g(x)=0,\quad (x,t)\in \Omega\times(0,T].$$
Therefore, by the identity \eqref{l1a} with $v(x,t)=g(x)$, $(x,t)\in \Omega\times(0,T]$, and $u=u_1-u_2$, we get
$$\sum_{j=0}^1(-1)^j\left[(g'(j\ell)+Ag(j\ell))\int_0^Tu(j\ell,t)\d t\right]-\int_0^\ell u(x,T)g(x)\d x+\sum_{j=1}^2(-1)^{j+1}\left(\int_0^T\lambda_1^j(t)\d t\right)g(x_1^j)=0.$$
Then, we obtain
$$\abs{\int_0^T\lambda_1^1(t)\d t} |x_1^1-x_1^2|\leq C\left(\sum_{j=0}^1\norm{u(j\ell,\cdot)}_{L^2(0,T)}+\norm{u(\cdot,T)}_{L^2(0,\ell)}\right).$$
By combining this with Theorem \ref{t4} and \eqref{t3a}, we get
$$ |x_1^1-x_1^2|\leq r_2^{-1}C\left(\sum_{j=0}^1\norm{u(j\ell,\cdot)}_{L^2(0,T)}+\norm{u(\ell,\cdot)}_{H^1(T_1,T)}\right),$$
with $C>0$  depending  on $r_2$, $\delta_0$, $\Omega$, $A$, $\mu$ and $T$.
To derive the estimate \eqref{t5c}, fix $R>1$, $\tau\in[-R,R]$, $z=D+1+ i\tau$. By choosing
$$v_{z}(x)=e^{-\frac{A (x-x^1_1)}{2}}e^{z^{\frac 1 2} (x-x_1^1)},\quad x\in(0,\ell)$$
in \eqref{t2d}, we get
\begin{align*}
&\abs{\widehat{\lambda_1^1}((D+1)+i\tau)v_z(x_1^1)-\widehat{\lambda_1^2}((D+1)+i\tau)v_z(x_1^2)}\\
\leq &CR_z\Big(\sum_{j=0}^1\norm{u(j\ell,\cdot)}_{L^2(0,T)}+\norm{u(\ell,\cdot)}_{H^1(T_1,T)}\Big),
\end{align*}
with
$$R_z=1+(1+|z|^{\frac 1 2})|e^{-z^{\frac 1 2}x_1^1}|+(1+|z|^{\frac 1 2})|e^{z^{\frac 1 2}(\ell-x_1^1)}|.$$
Then, by repeating the argument in the last step of Theorem \ref{t2},  we obtain the estimate \eqref{t5c}.

\section{Numerical results and discussions}\label{sec:numer}

Now we present several numerical examples of recovering point sources in one- and two-dimensional cases using the Levenberg-Marquardt algorithm \cite{Levenberg:1944,Marquardt:1963}.
Specifically, to recover the location $x^*$ of the point source and its amplitude $\lambda^*$, we define a nonlinear operator $F:(x,\lambda)\in \Omega \times L^2(0,T) \rightarrow u|_{[0,T]\times\partial\Omega}\in L^2([0,T];H^1(\partial\Omega))$, where $u$ solves problem \eqref{eq1} with parameters $x$ and $\lambda$. Starting from an initial guess $(x^0,\lambda^0)$, the algorithm generates a sequence of iterates $(x^k,\lambda^k)_{k\geq0}$. At each iteration $k$, the update $(x^{k+1},\lambda^{k+1})$ is computed as
\begin{equation*}
(x^{k+1},\lambda^{k+1})=\operatorname{argmin}_{x,\lambda} J_k(x,\lambda),
\end{equation*}
with the functional $J_k(x,\lambda)$ based at $(x^k,\lambda^k)$ given by
\begin{align*}
    J_k(x,\lambda)=&\left\|F(x^{k},\lambda^k)-z^{\delta}+\partial_xF(x^k,\lambda^k)(x-x^k)+\partial_{\lambda} F(x^k,\lambda^k)(\lambda-\lambda^k) \right\|^2_{L^2([0,T]\times\partial\Omega)}\\
    & +\beta_x^k|x-x^k|^2+{\beta_{\lambda}^k}\|\lambda-\lambda^k\|_{L^2(0,T)}^2,
\end{align*}
where $z^\delta$ denotes the noisy data, and $\beta_x^k>0$ and $\beta_\lambda^k>0$ are regularization parameters for the location $x$ and amplitude $\lambda$, respectively. The terms $\partial_x F (x^k,\lambda^k)$ and $\partial_\lambda F (x^k,\lambda^k)$ denote the Jacobians of the mapping $F$ with respect to $x$ and $\lambda$, respectively. The parameters $\beta_x^k$ and $\beta_\lambda^k$ are updated geometrically with factors $\gamma_x,\gamma_\lambda\in(0,1)$ such that $\beta_x^{k+1}=\gamma_x\beta_x^k $ and $\beta_\lambda^{k+1}=\gamma_\lambda \beta_\lambda^k $. We use early stopping to avoid over-fitting during the iteration.

Throughout, the direct problems are solved using the Galerkin FEM with continuous piecewise linear elements in space and the backward Euler scheme in time \cite{Thomee:2006}. The exact data is generated on a finer spatial-temporal mesh than that used for solving the inverse problem in order to mitigate the inverse crime. The Jacobians $\partial_x F(x^k,\lambda^k)$ and $\partial_\lambda J(x^k,\lambda^k)$ are computed using a finite difference method and the adjoint method, respectively. The concrete hyperparameter settings, including the ones used in the Levenberg-Marquardt algorithm, are listed in Table~\ref{tab:mesh param}. The noisy data $z^\delta$ is generated by adding independent and identically distributed Gaussian noise of zero mean and standard deviation $\delta \norm{u_{x^*,\lambda^*}}_{L^\infty([0,T]\times\partial\Omega)}$ to the exact data $u_{x^*,\lambda^*}$, with $\delta$ denoting the relative noise level. We compute the mean and standard error of the $L^1$ errors in the recovered source locations and amplitudes over ten independent runs for each setting.

\begin{table}[hbt!]
    \centering
    \begin{threeparttable}
    \caption{Hyper-parameter settings for the experiments. In all examples, we take $\Delta t=\Delta x$, and denote by the subsripts $\rm d$ and $\rm i$ the parameters for direct and inverse problems, respectively. \label{tab:mesh param}}
    \begin{tabular}{ccccc}
    \toprule
    Example  &  $(\Delta t,\Delta x)_{\rm d}$ & $(\Delta t,\Delta x)_{\rm i}$ &  $(\beta_x^0,\beta_\lambda^0)$ & $(\gamma_x,\gamma_\lambda)$\\
    \midrule
\ref{ex:1}(i) &   $10^{-3}$  & $4\times 10^{-3}$ & (1, 5) & (0.8, 0.8) \\
\ref{ex:1}(ii) &  $10^{-3}$  & $4\times 10^{-3}$ & (1, 2) & (0.8, 0.8) \\
\ref{ex:2} & $5\times10^{-3}$ & $ 2\times10^{-2}$ & (1, 50) & (0.8, 0.8) \\
\ref{ex:3} & $5\times 10^{-3}$ & $2\times10^{-2}$ &(1, 50) & (0.8, 0.8) \\
\ref{ex:4} & $5\times10^{-3}$ & $2\times10^{-2}$ & (1, 50) & (0.8, 0.8) \\
    \bottomrule
    \end{tabular}
\end{threeparttable}
\end{table}

First, we investigate the one-dimensional case ($d=1$). The notation $\mathds{1}_{[0,1]}$ denotes the characteristic function of the interval $[0,1]$.
\begin{example}\label{ex:1}
$\Omega=(0,1)$, $T=2$, $A=0$, $\mu=1$, and $u_0\equiv0$. There is a single point source at $x=0.5$ with amplitude given by {\rm (i)} $\lambda(t)=0.5e^{-5t}$ and {\rm (ii)} $\lambda(t)=\mathds{1}_{[0,1]}(t)$.
\end{example}

Note that the amplitude $\lambda(t)$ in case (ii) belongs to the space $H^s(0,T)$ for any $s<1/2$. For case (i), the algorithm is initialized to $x^0=0.4$ and $\lambda^0(t)=0.8\lambda(t)$ for $t\in[0,2]$; for case (ii), we initialize with $x^0=0$ and $\lambda^0(t)=0$ for $t\in[0,2]$.
Fig.~\ref{fig:1} shows that the  reconstructed source location is very accurate in both cases (i) and (ii) for data with $\delta=0.5\%$ noise. The reconstructed amplitude is also fairly accurate for both cases but there is a larger error at the discontinuity point for case (ii). The convergence plot (mid column) indicates that the Levenberg-Marquadrdt algorithm converges steadily, and the amplitude $\lambda$ takes more iterations to reach convergence. The convergence plots show that, when varying the noise level $\delta$, the convergence is much faster for the source location $x$ than that for the amplitude $\lambda$, which agree well with the stability estimates in Theorem \ref{t5}: the recovery of the location $x$ is Lipschitz stable whereas the recovery of the amplitude $\lambda(t)$ is only logarithmically stable, and worse stability generally implies a slower convergence with respect to the noise level $\delta$ \cite{HuJinZhou:2025}.

\begin{figure}[htbp]
    \centering
    \includegraphics[width=0.32\linewidth]{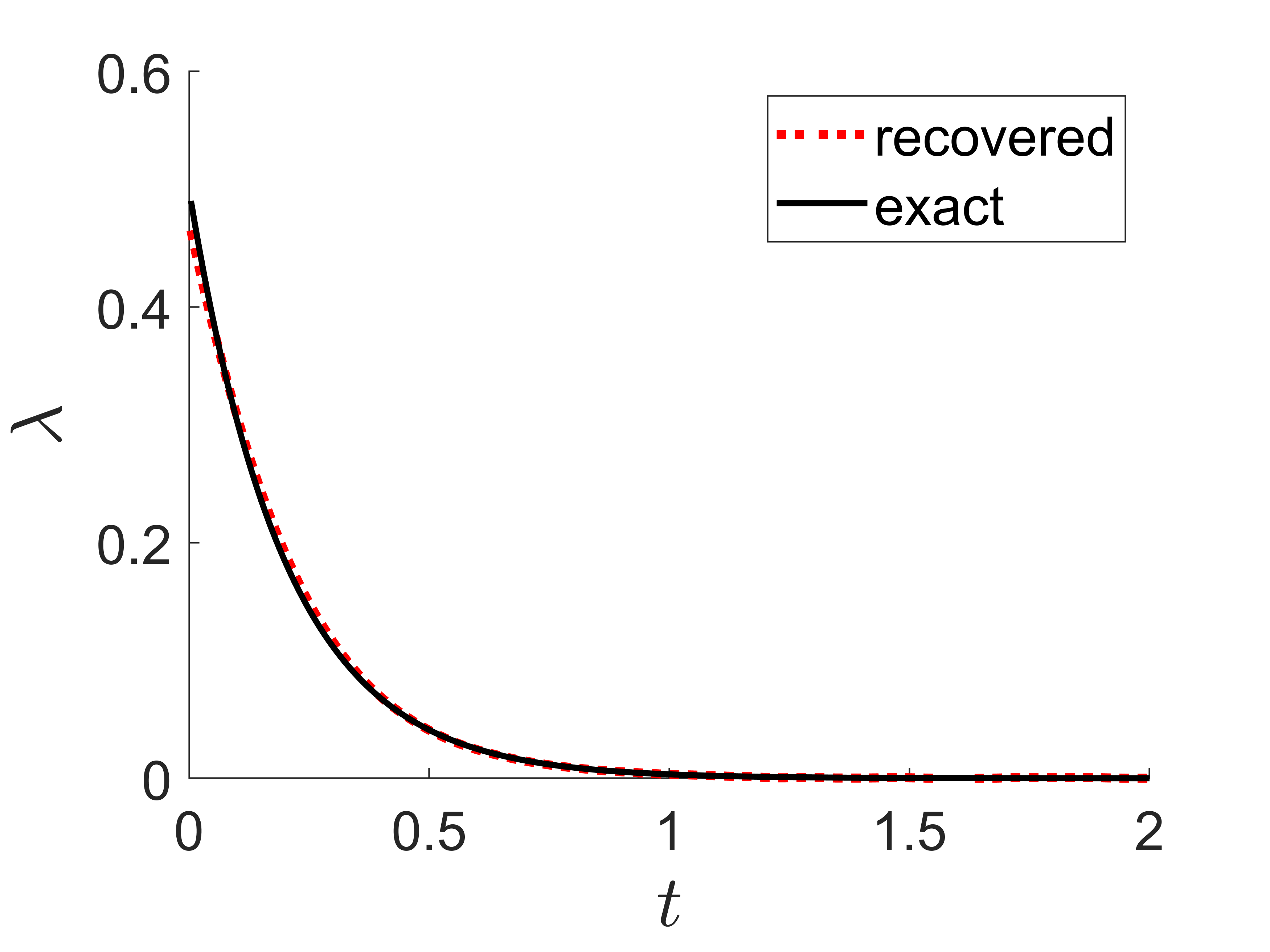}
    \includegraphics[width=0.32\linewidth]{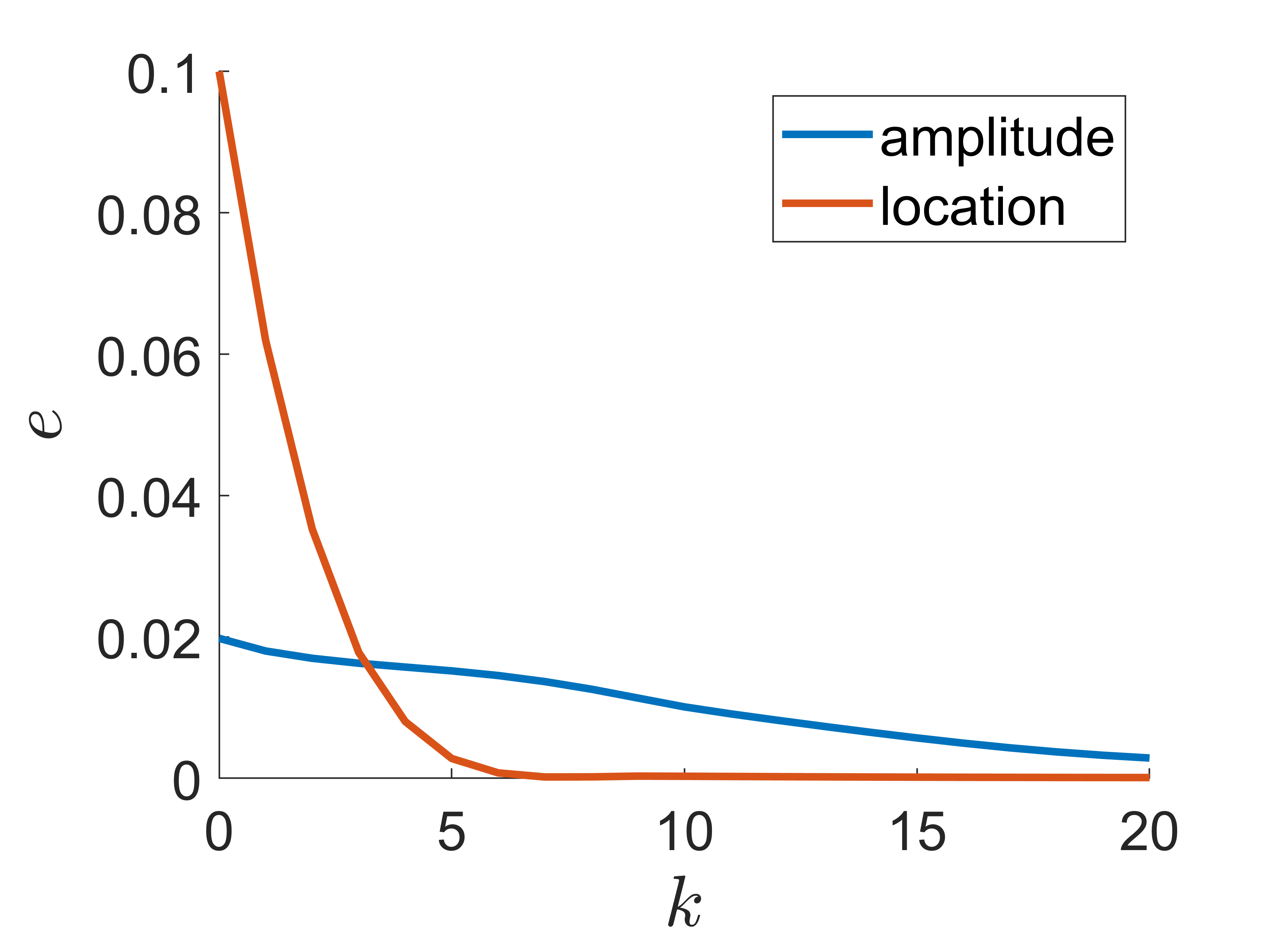}
    \includegraphics[width=0.32\linewidth]{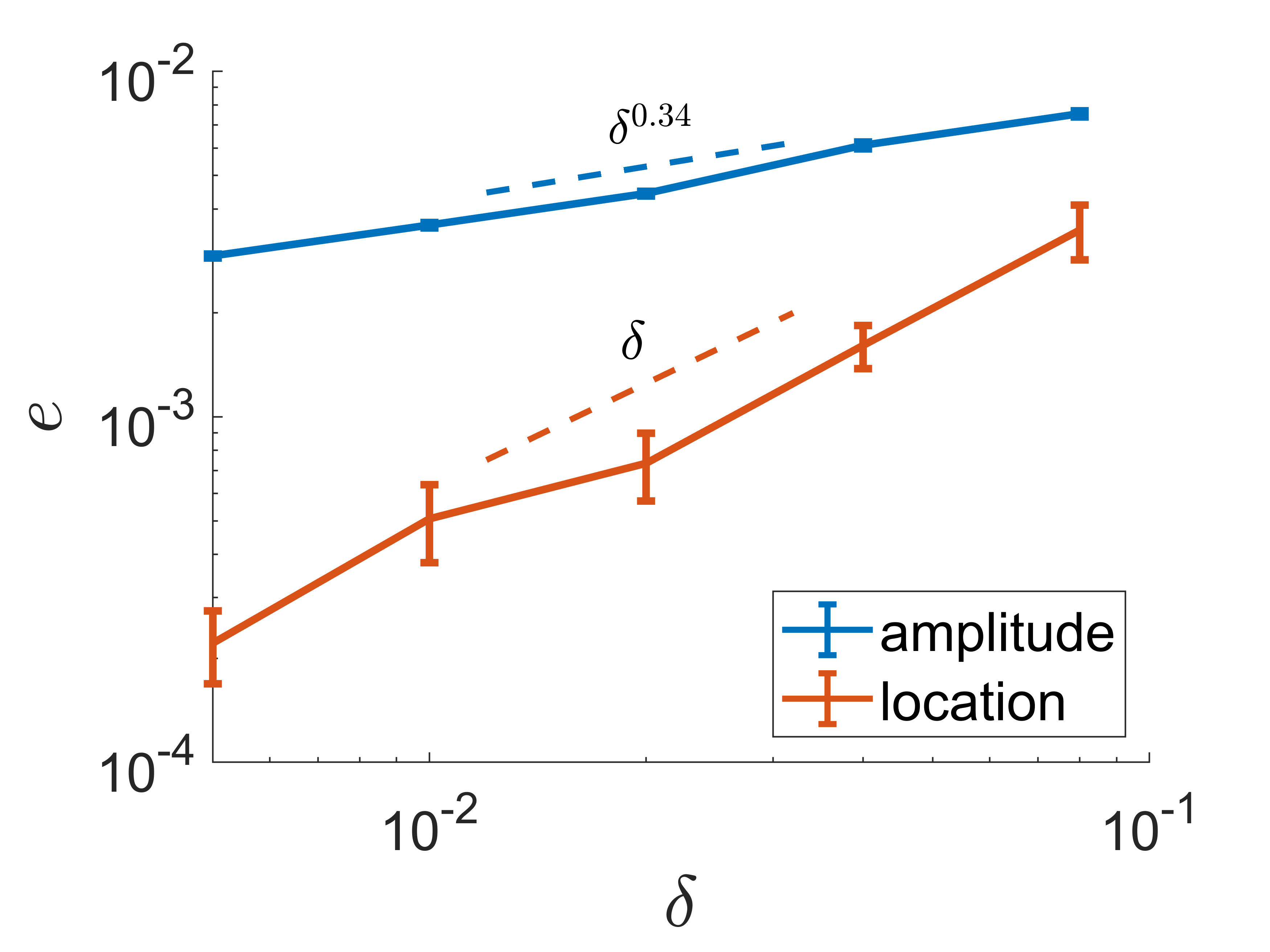}
    \includegraphics[width=0.32\linewidth]{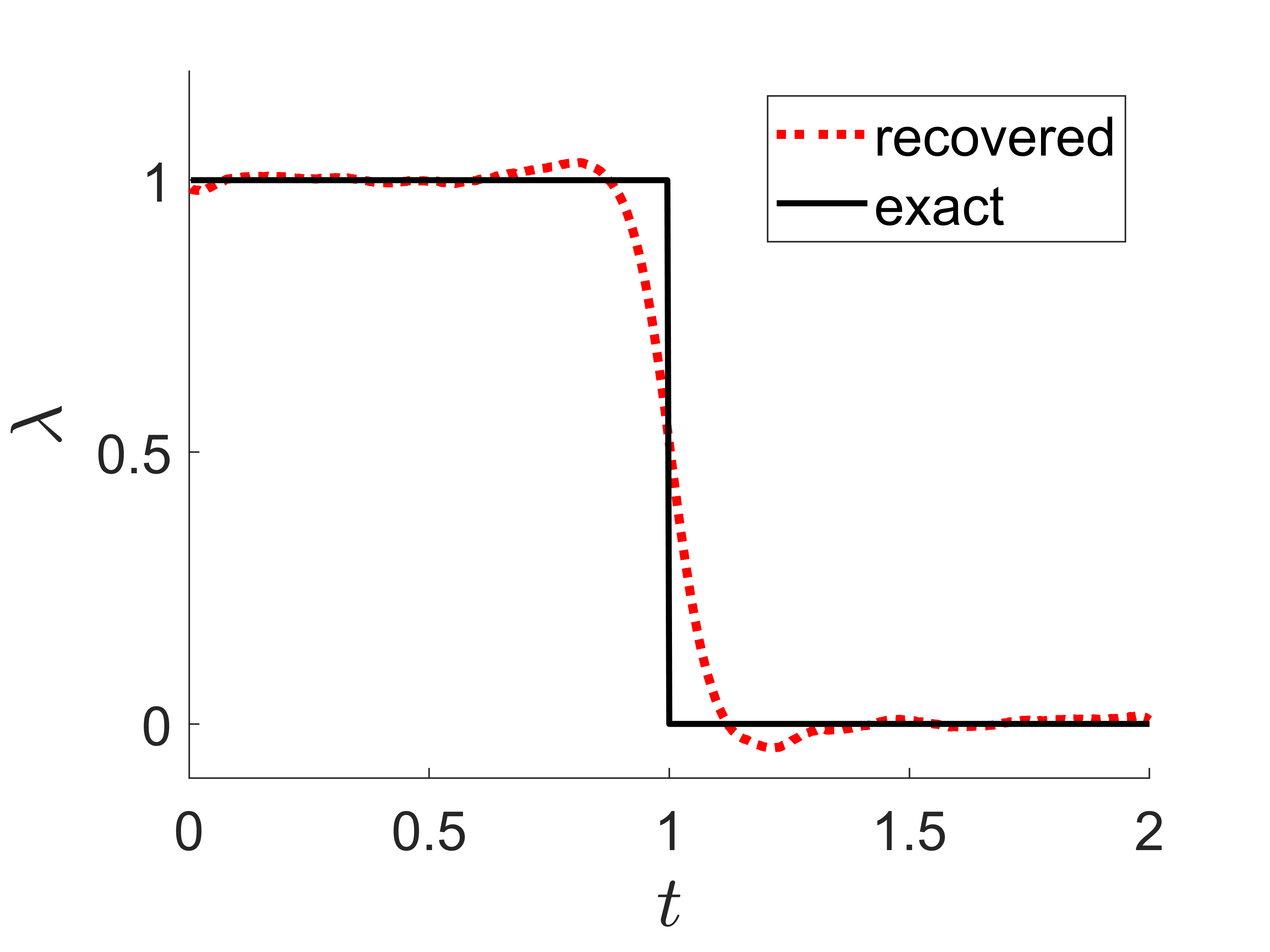}
    \includegraphics[width=0.32\linewidth]{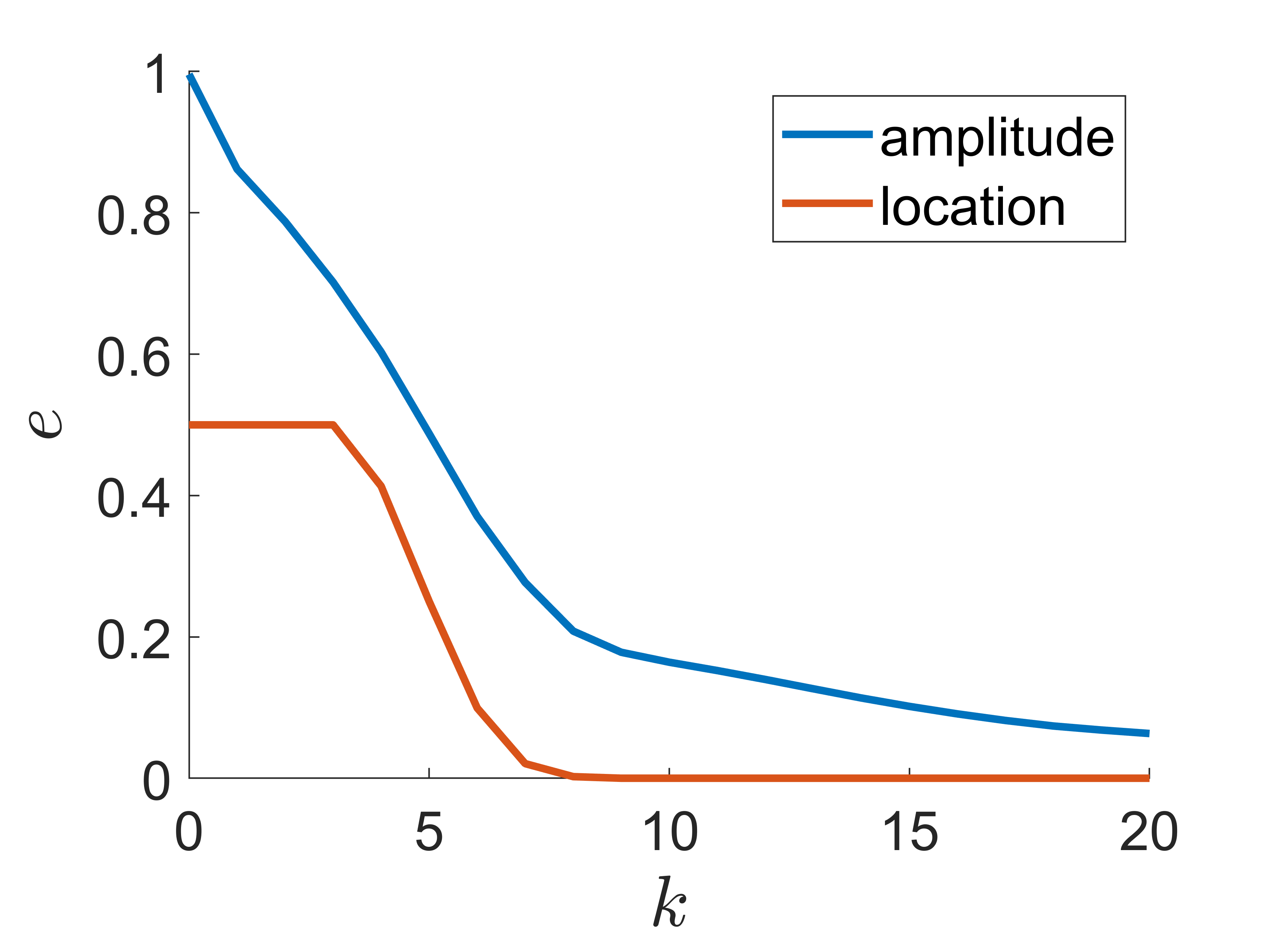}
    \includegraphics[width=0.32\linewidth]{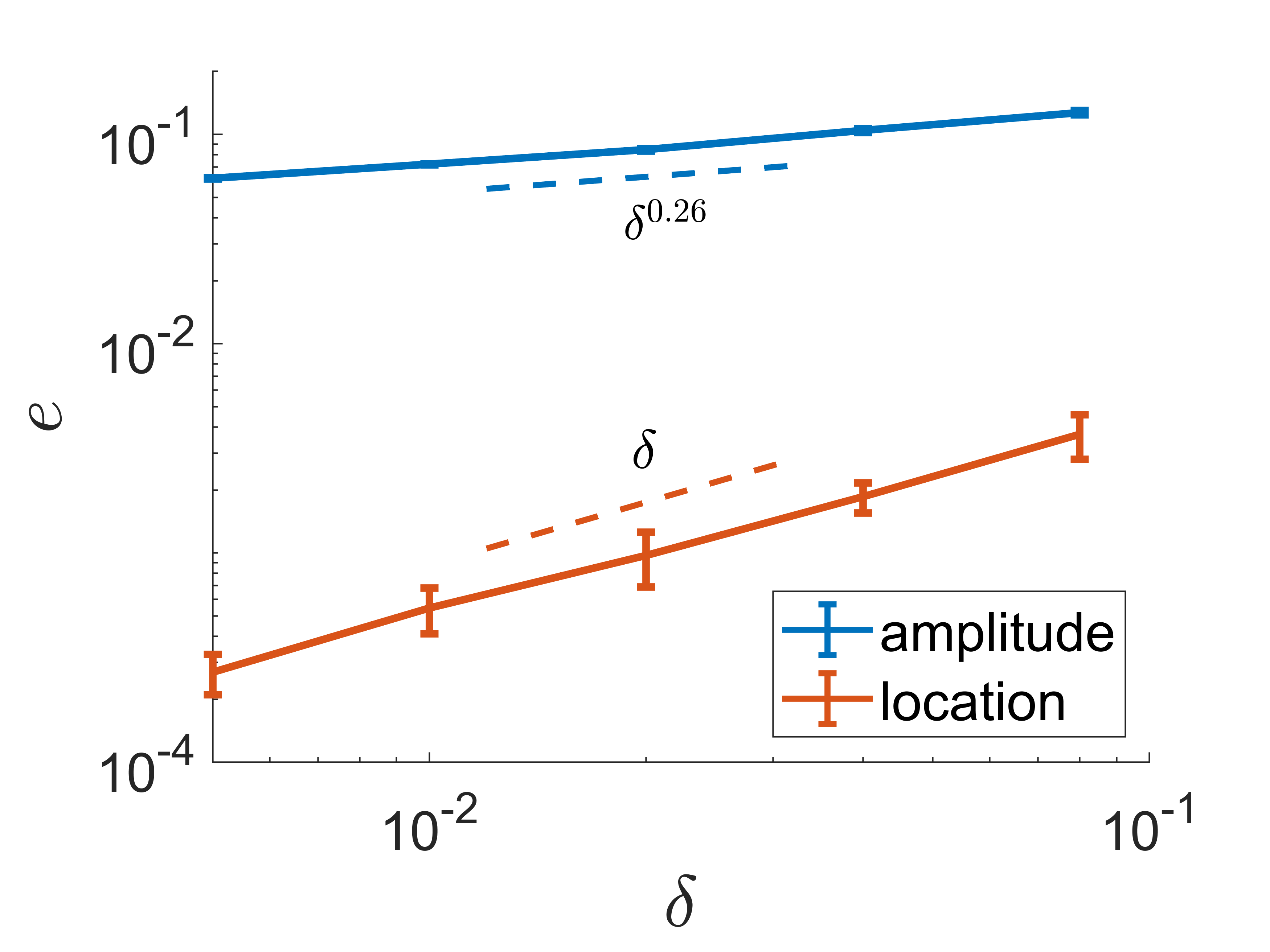}
    \caption{The numerical results for Example~\ref{ex:1} (i) (top) and (ii) (bottom): the recovered source amplitude $\lambda$ (for $\delta=0.5\%$) (left),  the error $e$ versus iteration $k$ (for  $\delta=0.5\%$), and the error $e$ versus the noise level $\delta$ on a log-log scale (right).}
    \label{fig:1}
\end{figure}

The next three examples are for two-dimensional problems. The first example deals with one point source.

\begin{example}\label{ex:2}
$\Omega=(0,1)^2$, $T=2$, $A=0$, $\mu=1$, and $u_0\equiv0$. There is a single point source at $x=(0.5,0.5)$ with amplitude given by (i) $\lambda(t)=0.5e^{-5t}$ and (ii) $\lambda(t)=\mathds{1}_{[0,1]}(t)$.
\end{example}

For case (i), the algorithm is initialized to $x^0=0.4$ and $\lambda^0(t)=0.8\lambda(t)$ for $t\in[0,2]$; for case (ii), we initialize with $x^0=0$ and $\lambda^0(t)=0$ for $t\in[0,2]$. The numerical results are presented in Fig.~\ref{fig:2}. The overall observations are similar to that for Example \ref{ex:1}. In particular the convergence rate of the reconstruction error for the amplitude $\lambda(t)$ with respect to the noise level $\delta$ is much slower than that for the location, and a nearly linear convergence rate is observed for the location $x$, which agrees well with the theoretical prediction in Theorems \ref{t3} and \ref{t100}. It is worth noting that the linear convergence in $\delta$ for the recovered location is observed for both cases (i) and (ii), despite their difference in smoothness.

The next example involves two point sources with smooth amplitudes.
\begin{example}\label{ex:3}
$\Omega=(0,1)^2$, $T=2$, $A=0$, $\mu=1$, and $u_0\equiv0$. There are two point sources at $x_1=(0.25,0.25)$ and $x_2=(0.75,0.75)$, with amplitudes $\lambda_1(t)=0.5e^{-5t}$ and $\lambda_2(t)=0.25e^{-4t}$, respectively.
\end{example}

The method is initialized to $x_1^0=(0.2,0.2)$, $x_2^0=(0.8,0.8)$, and $\lambda_i^0(t)=0.8\lambda_i(t)$ for $i=1,2$ and $t\in[0,2]$. These results are shown in Fig.~\ref{fig:3}. In the presence of two point sources, the recovery of the location becomes more challenging, and the convergence rate in terms of the noise level $\delta$ also decreases, cf. the bottom right panel of Fig. \ref{fig:3}. This observation agrees with the theoretical prediction by Theorem \ref{t3}: the stability estimate for the recovery of the location deteriorates from a Lipschitz to a H\"{o}lder one when the number of point sources increases from one to multiple. This is also confirmed by the observation that the empirical rate still agrees well with the theoretical one.

The last example involves two point sources with nonsmooth amplitudes.
\begin{example}\label{ex:4}
$\Omega=(0,1)^2$, $T=2$, $A=0$, $\mu=1$, and $u_0\equiv0$. There are two point sources at $x_1=(0.25,0.25)$ and $x_2=(0.75,0.75)$, with amplitudes $\lambda_1(t)=\mathds{1}_{[0,\frac23]}(t)$ and $
\lambda_2(t)=\mathds{1}_{[0,\frac43]}(t)$, respectively.
\end{example}

The algorithm is initialized to $x_1^0=x_2^0=(0,0)$ and $\lambda_1^0(t)=0,\lambda_2^0(t)=1$ for $t\in[0,2]$.
These results are shown in Fig.~\ref{fig:4}. Due to the presence of the discontinuous amplitudes, the convergence of the algorithm becomes less steady, and the error of the amplitudes no longer decreases monotonically during the iteration like in earlier examples. Moreover, the recoveries of the amplitudes suffer from more pronounced oscillations, which indicates more severely ill-posed nature of the inverse problem. Nevertheless, the convergence rate with respect to the noise level $\delta$ still agrees with the theoretical prediction by Theorem \ref{t3}.

\begin{figure}[htbp]
    \centering
    \includegraphics[width=0.32\linewidth]{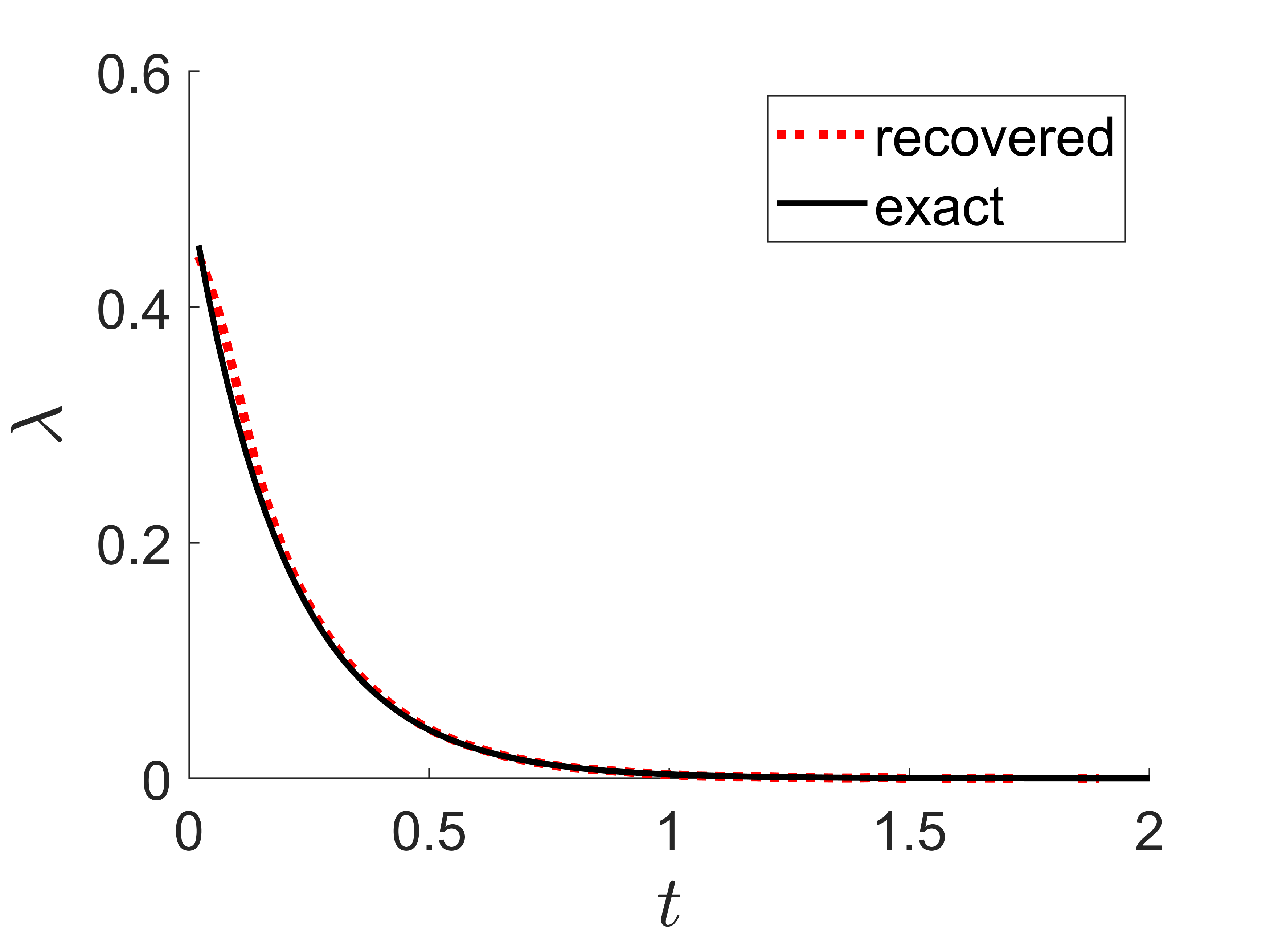}
    \includegraphics[width=0.32\linewidth]{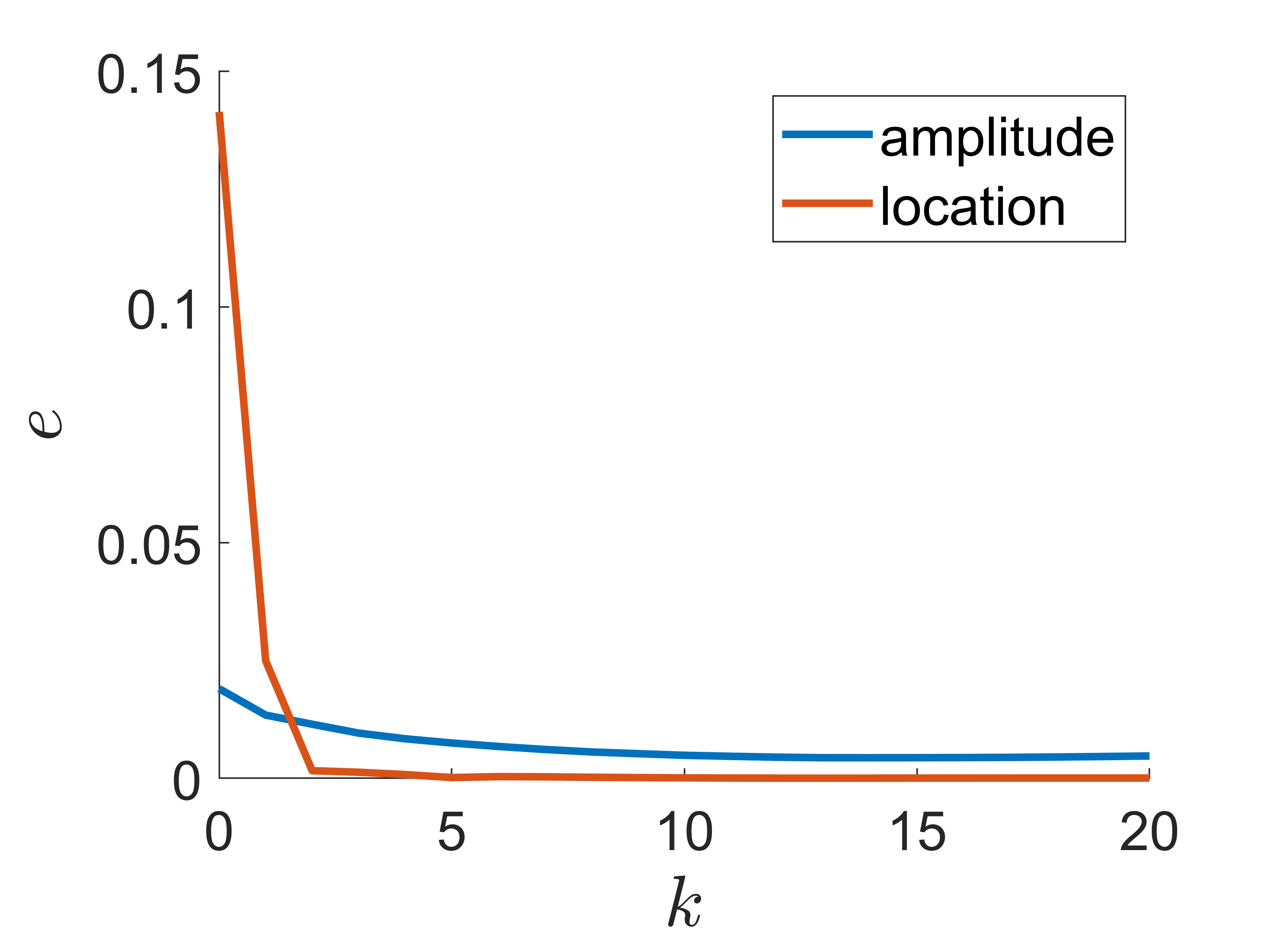}
    \includegraphics[width=0.32\linewidth]{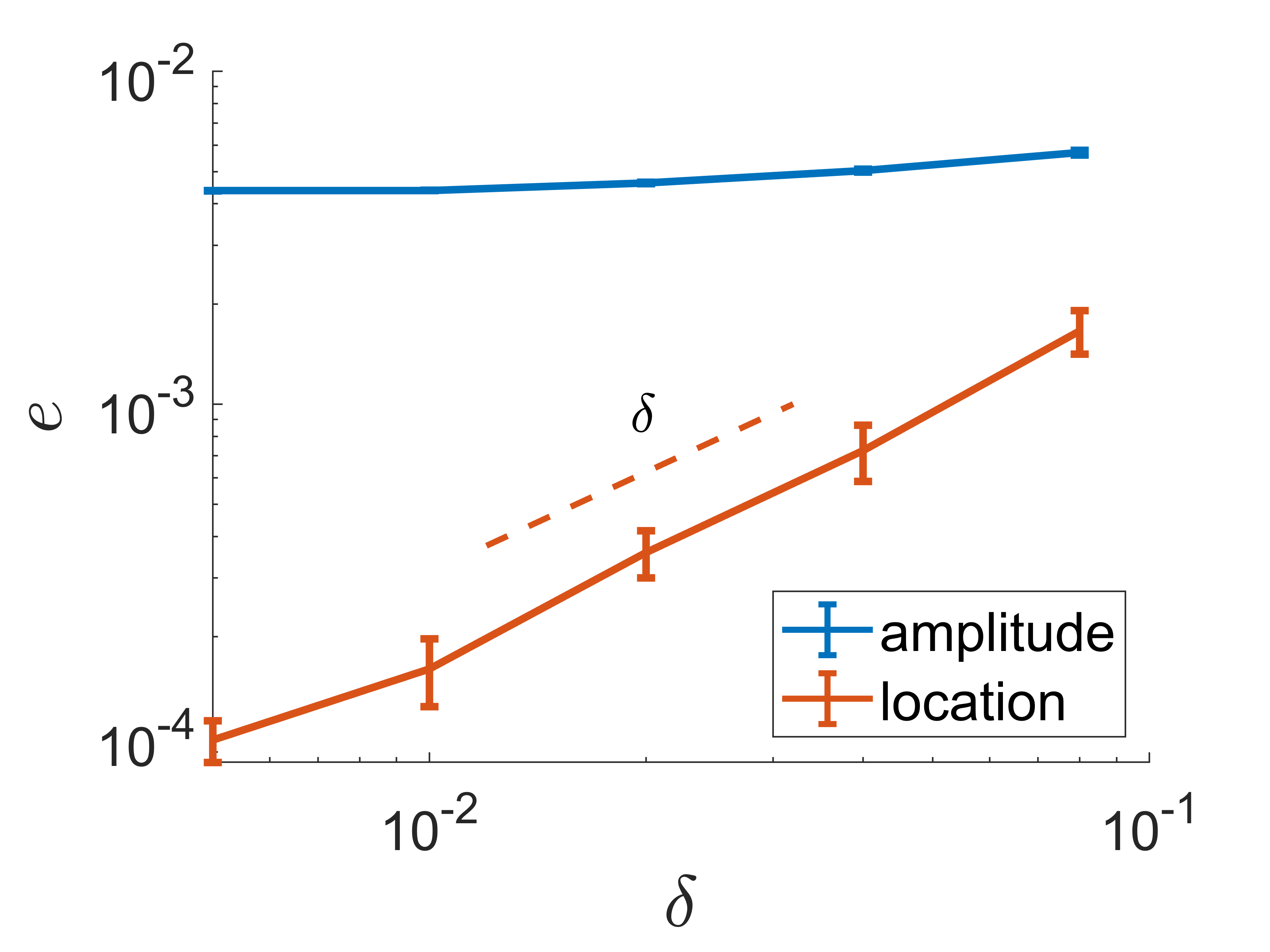}
    \includegraphics[width=0.32\linewidth]{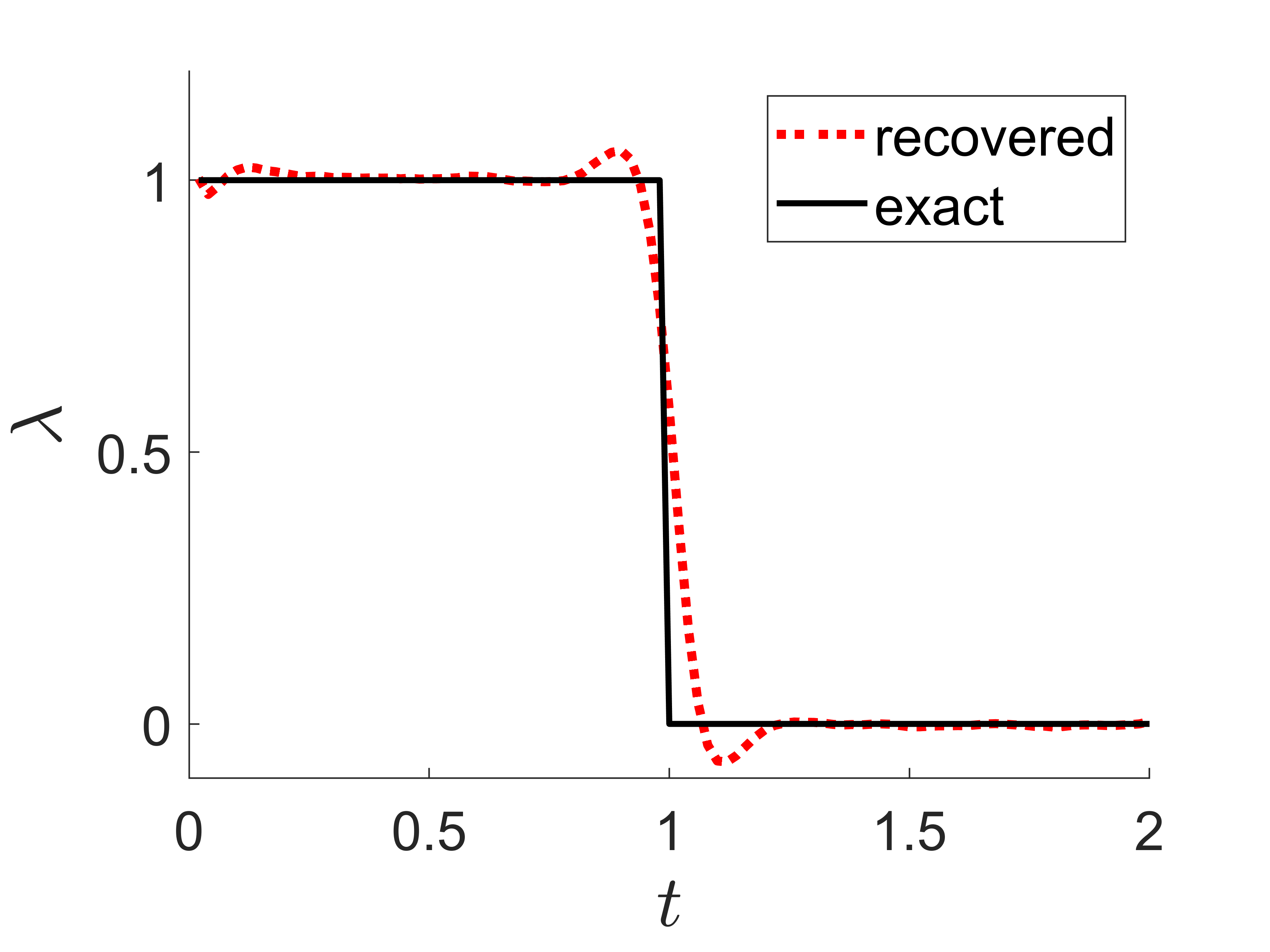}
    \includegraphics[width=0.32\linewidth]{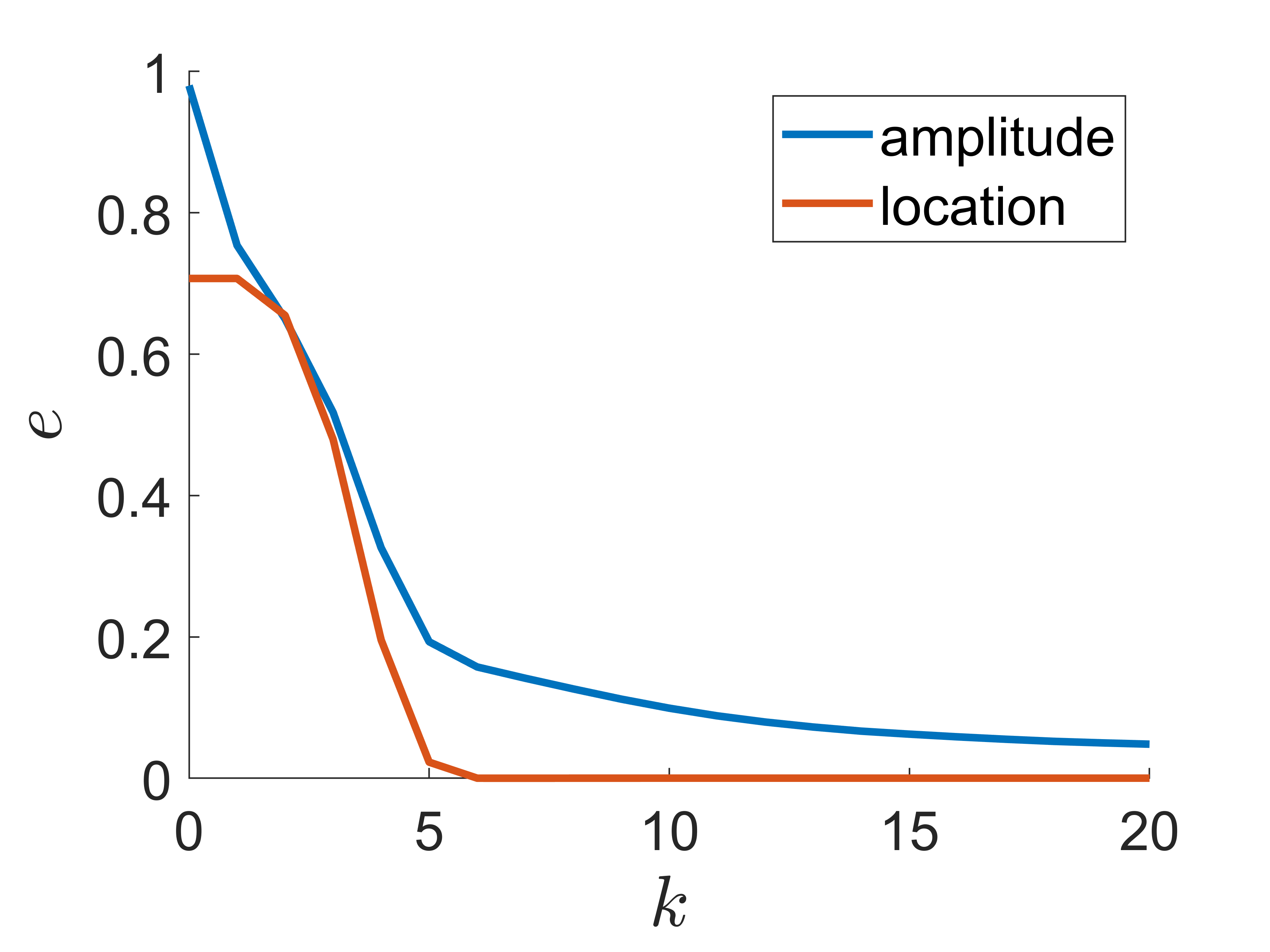}
    \includegraphics[width=0.32\linewidth]{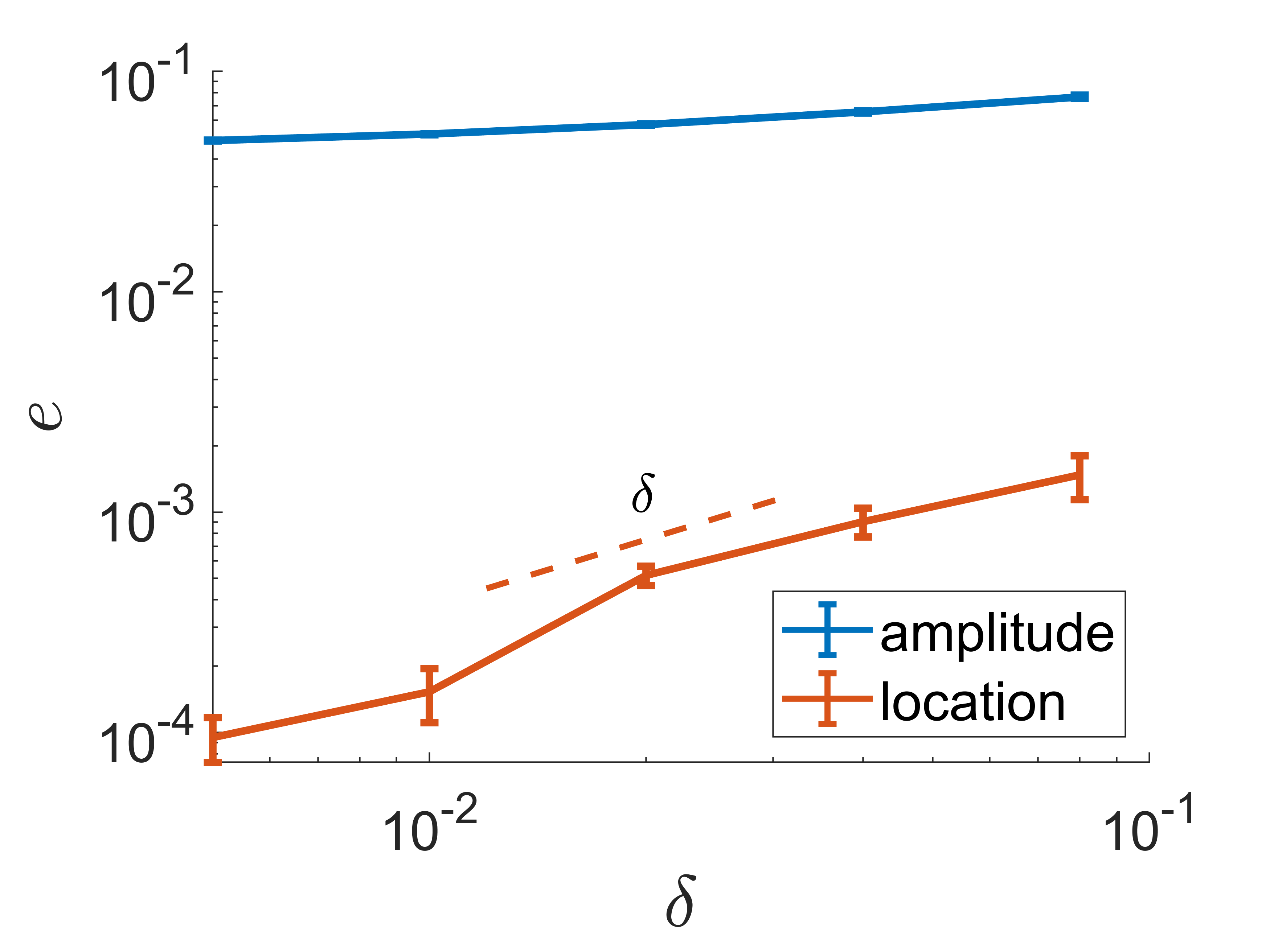}
    \caption{The numerical results for  Example~\ref{ex:2} (i) (top) and (ii) (bottom):  the recovered source amplitude (for $\delta=0.5\%$) (left); the error $e$ versus  iteration $k$ (for $\delta=0.5\%$) (middle), and the error $e$ versus the noise level $\delta$ on a log-log scale (right).}
    \label{fig:2}
\end{figure}

\begin{figure}[htbp]
    \centering
    \includegraphics[width=0.32\linewidth]{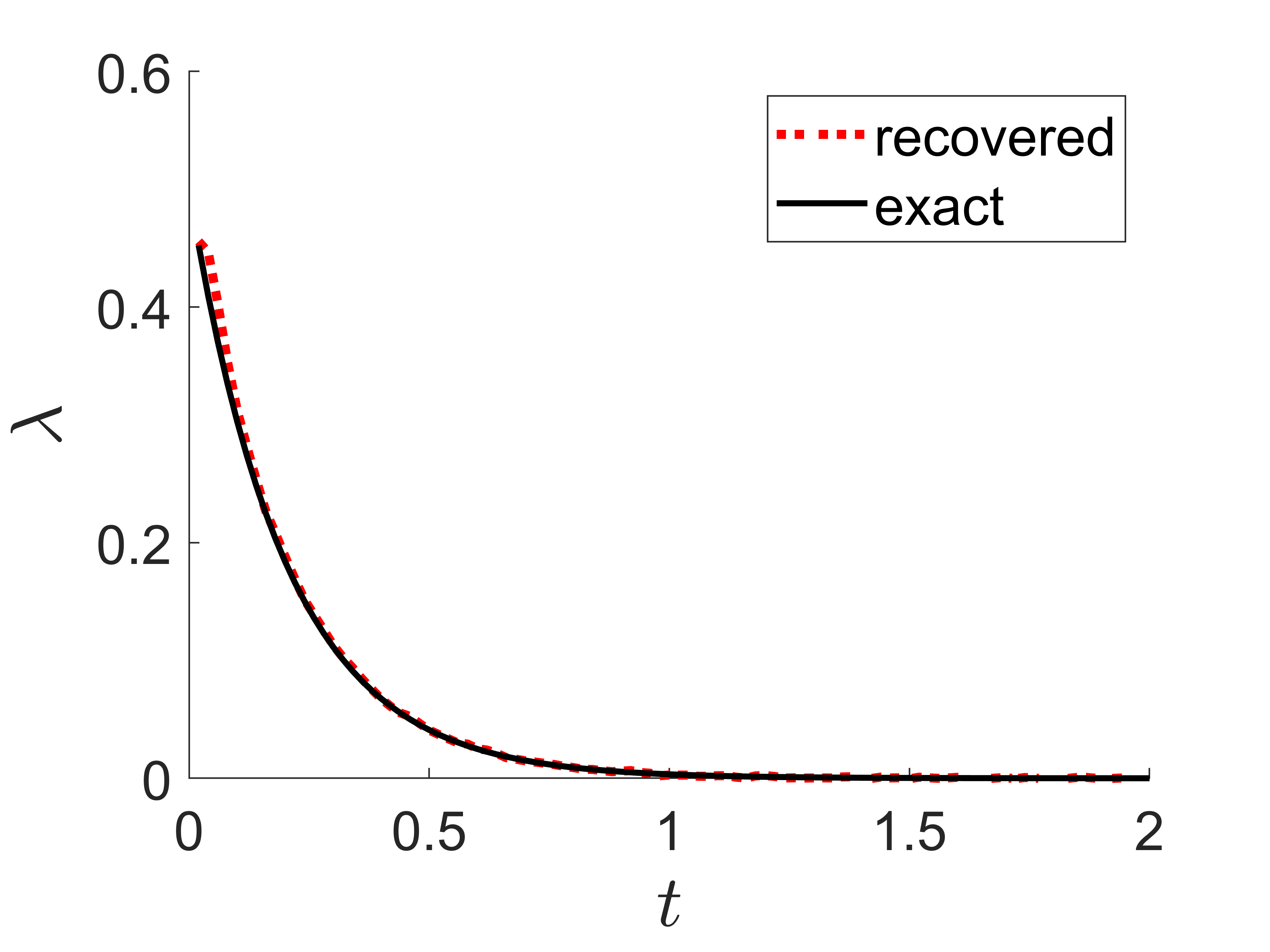}
    \includegraphics[width=0.32\linewidth]{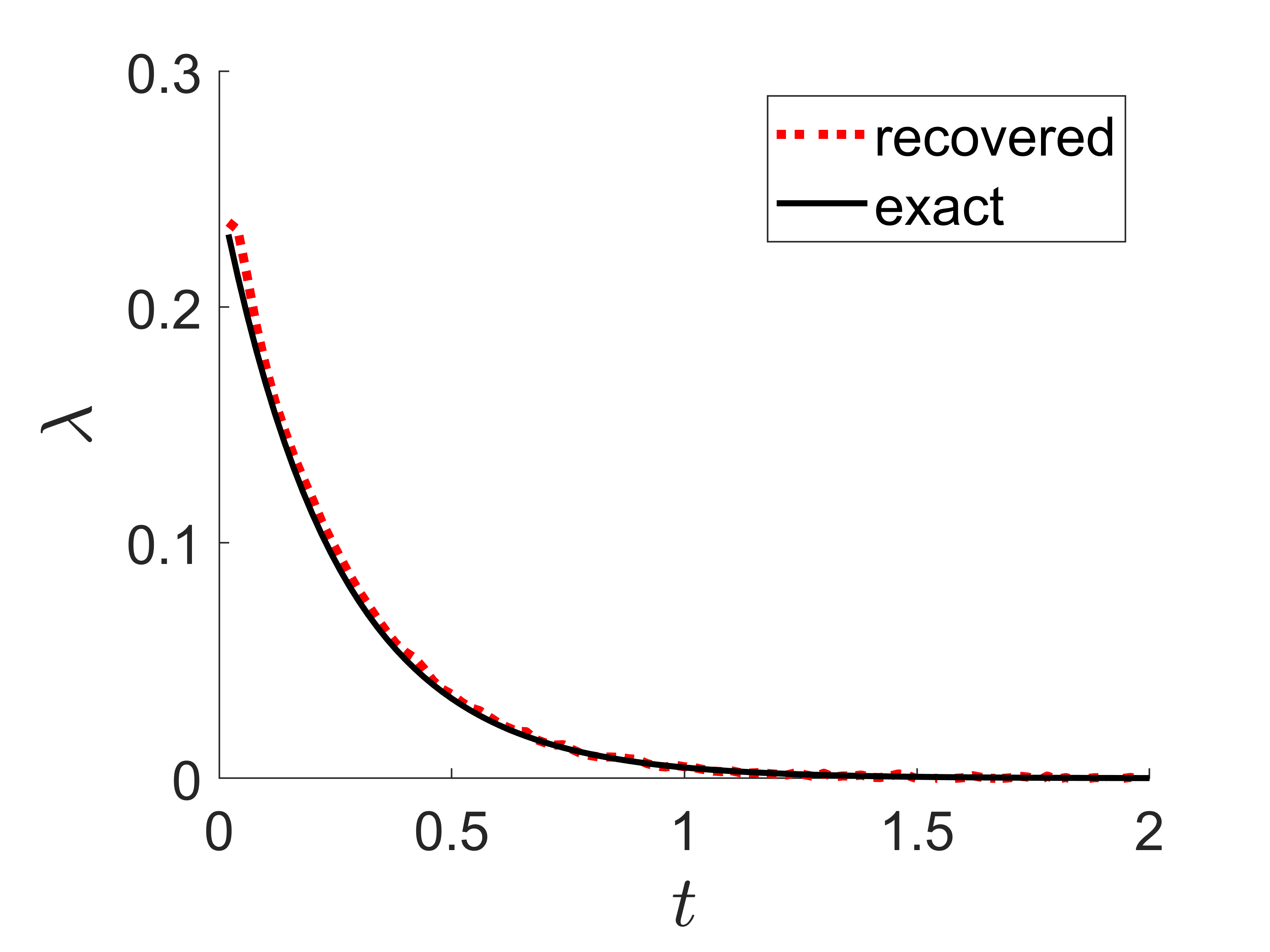} \\
    \includegraphics[width=0.32\linewidth]{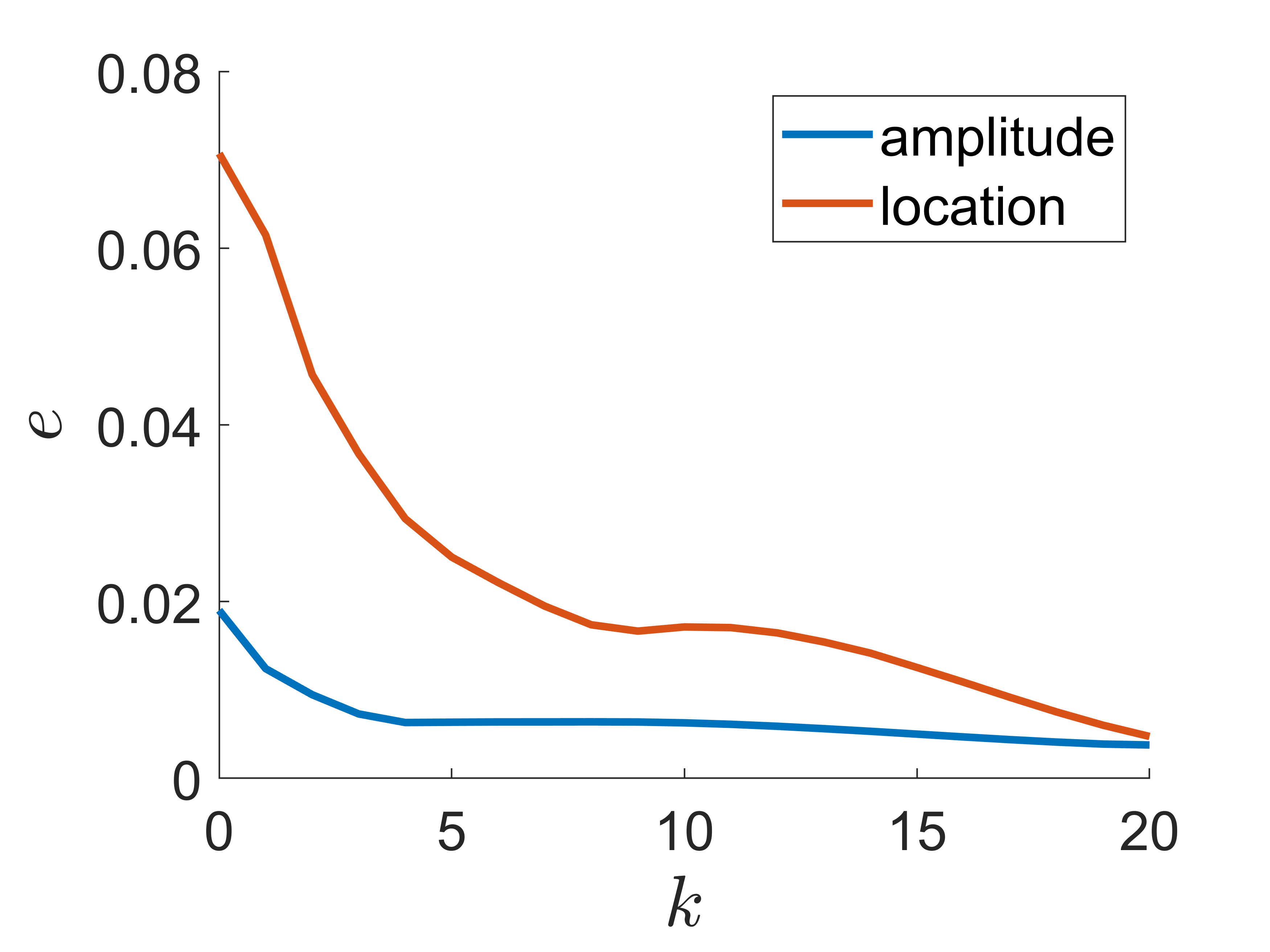}
    \includegraphics[width=0.32\linewidth]{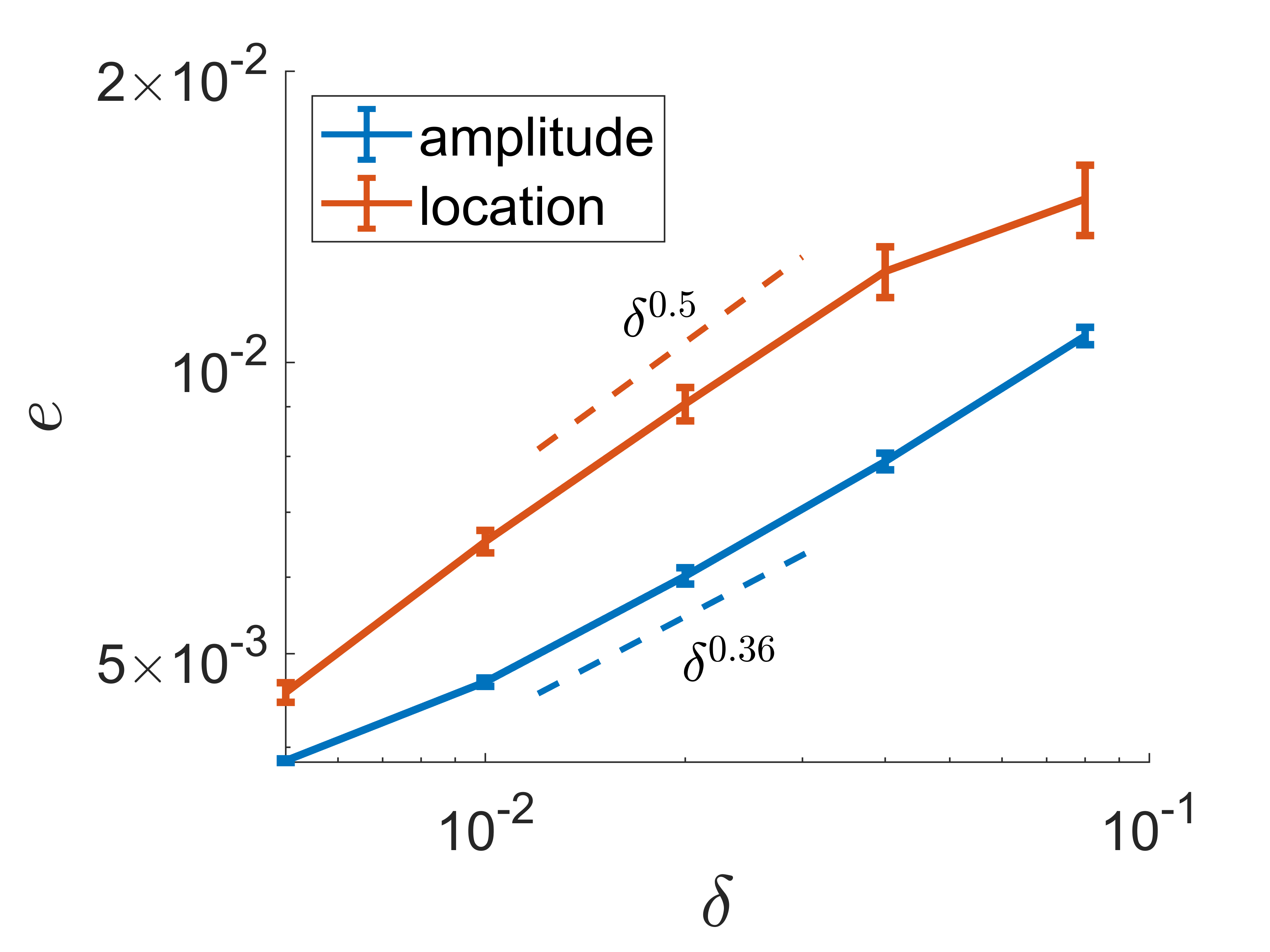}
    \caption{The numerical results for  Example~\ref{ex:3}: the recovered source amplitudes (for $\delta=0.5\%$) (top), the error $e$ versus iteration $k$ (for $\delta=0.5\%$) (bottom left), and the error $e$ versus the noise level $\delta$ on log-log scale (bottom right).}
    \label{fig:3}
\end{figure}

\begin{figure}[htbp]
    \centering
    \includegraphics[width=0.32\linewidth]{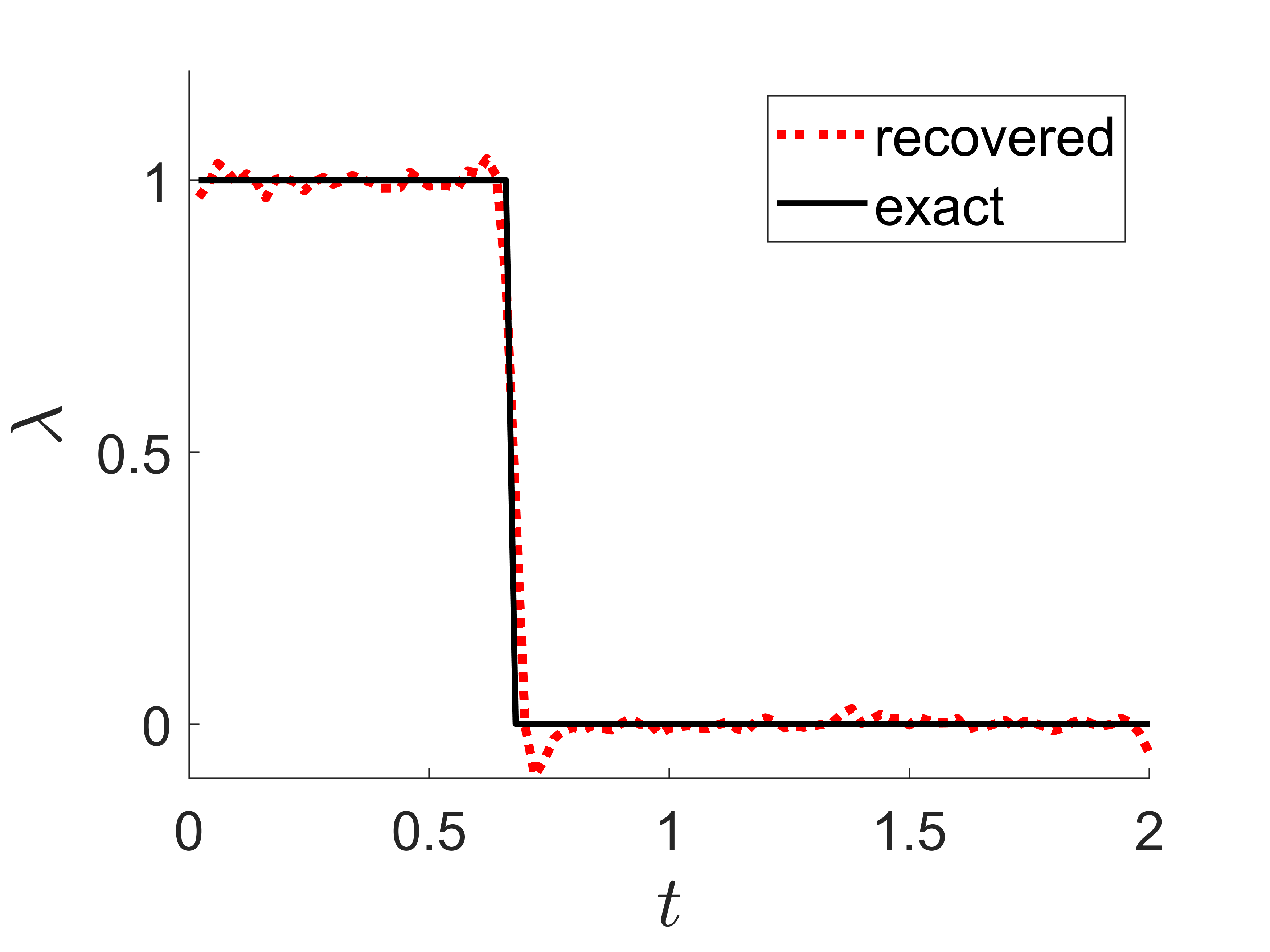}
    \includegraphics[width=0.32\linewidth]{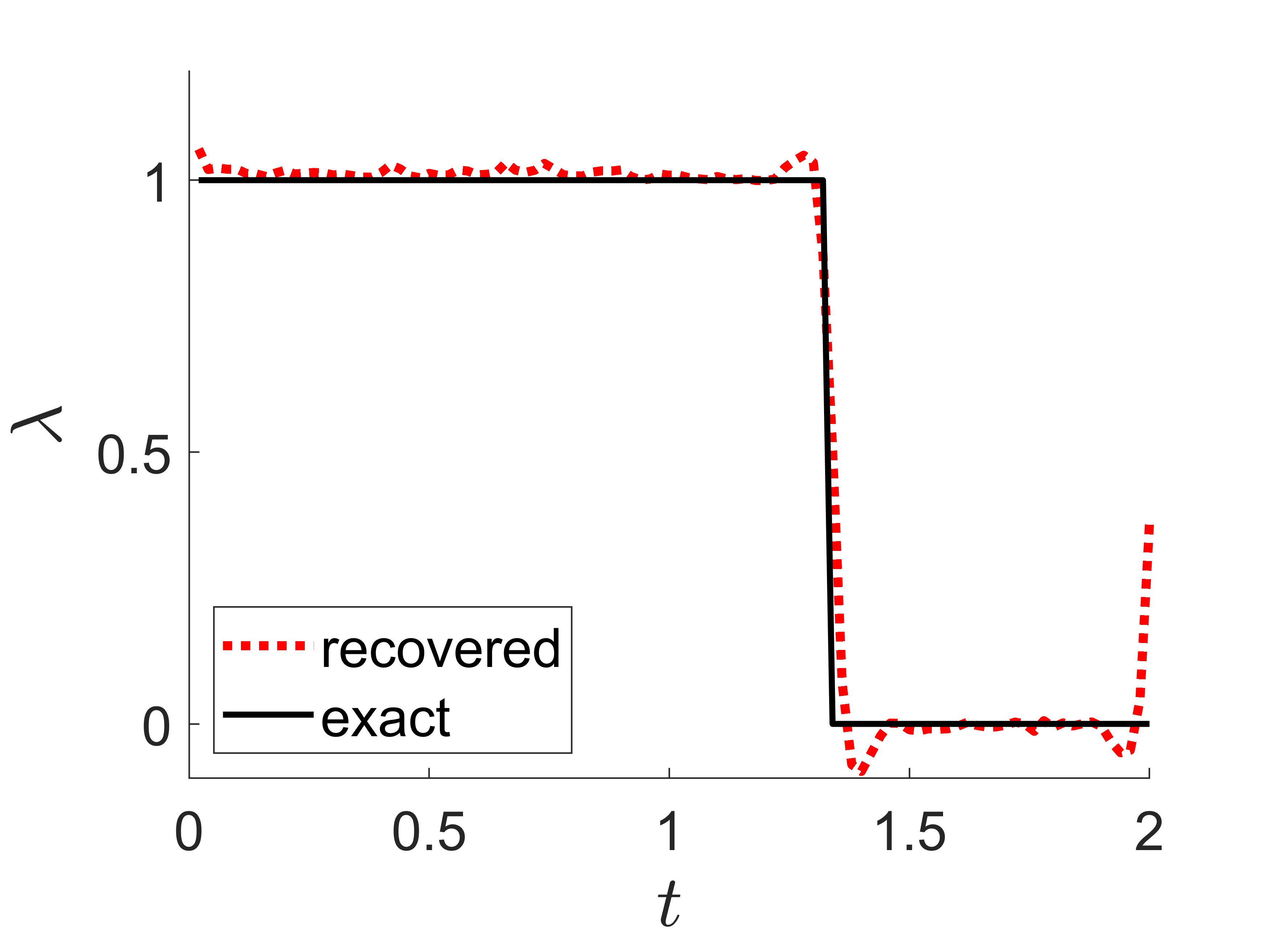} \\
    \includegraphics[width=0.32\linewidth]{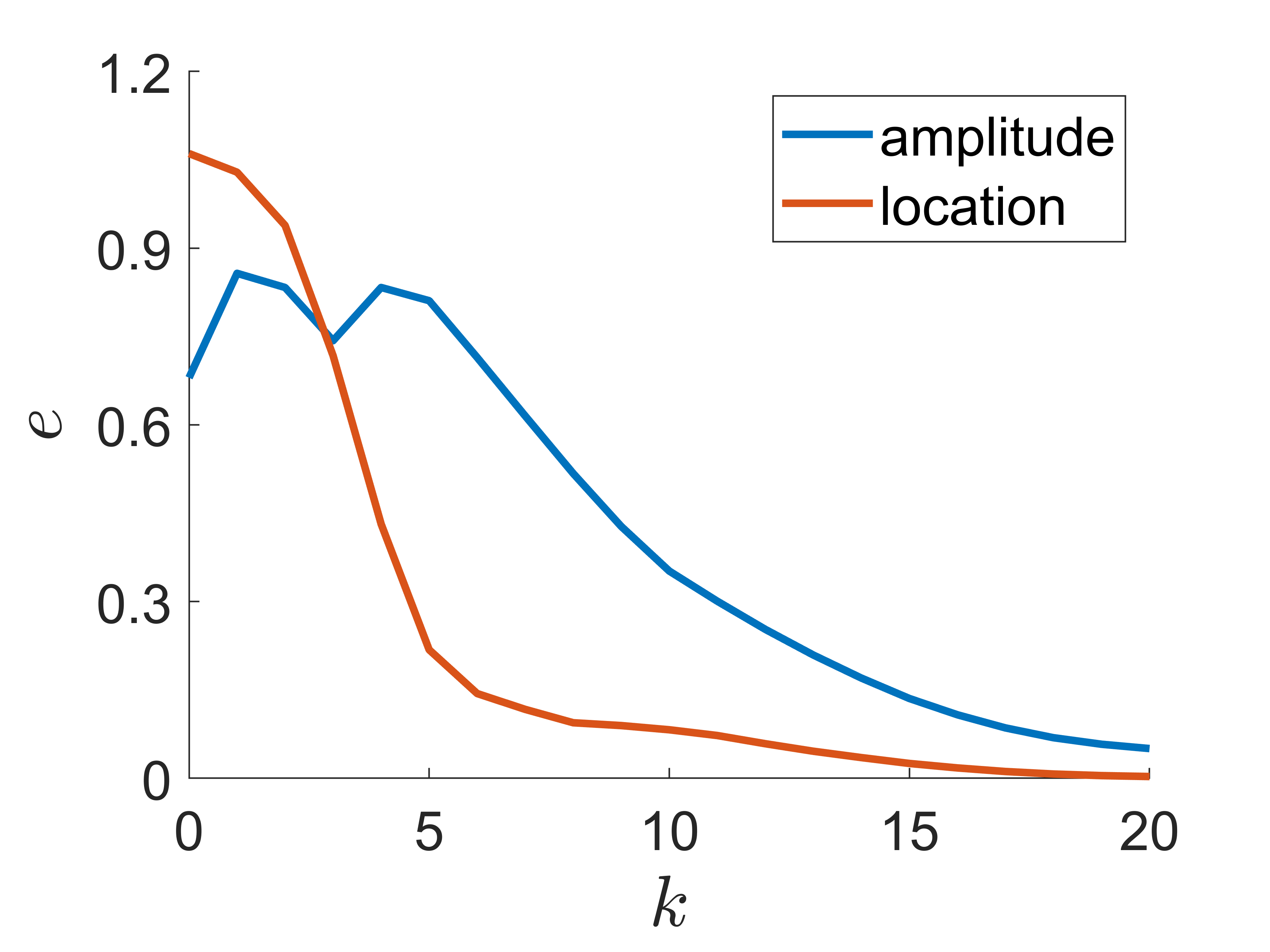}
    \includegraphics[width=0.32\linewidth]{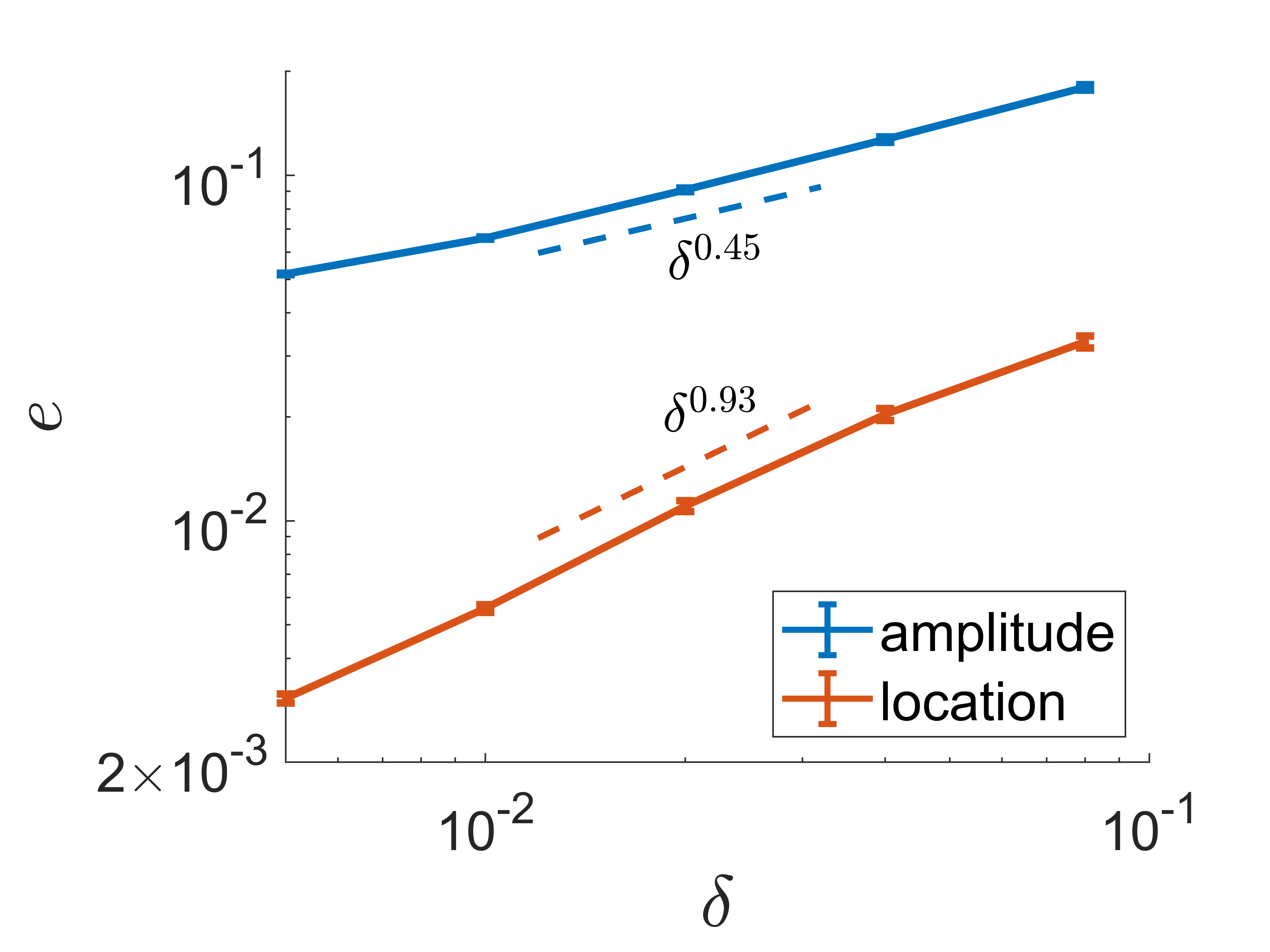}
    \caption{Numerical results for Example~\ref{ex:4}:  the recovered source amplitudes (for  $\delta=0.5\%$) (top), the error $e$ versus iteration $k$ (for $\delta=0.5\%$) (bottom left), and the error $e$ versus the noise level $\delta$ on a log-log scale (bottom right).}
    \label{fig:4}
\end{figure}

\section{Conclusion}
In this work we have established several new stability estimates for the inverse problem of recovering point sources in the parabolic equation. The stability estimate is of Lipschitz / H\"{o}lder type for recovering the location of the point source, but it is of logarithmic type for recovering the amplitude. The theoretical findings are supported by numerical experiments. Future research problems including multiple point sources and partial Cauchy data, as well as the theoretical analysis of the reconstruction algorithms.
\bibliographystyle{abbrv}
\bibliography{point}

\end{document}